\newtheorem{theorem}{Theorem}[section]
\newtheorem{lemma}{Lemma}[section]
\newtheorem{definition}{Definition}[section]
\newtheorem{remark}{Remark}[section]
\newtheorem{example}{Example}[section]
\newtheorem{proposition}[theorem]{Proposition}
\newtheorem{corollary}[theorem]{Corollary}
\newenvironment{thm}{\begin{theorem}}{\end{theorem}}
\newenvironment{lem}{\begin{lemma}}{\end{lemma}}
\newenvironment{defi}{\begin{definition}}{\end{definition}}
\newenvironment{rem}{\begin{remark}}{\end{remark}}
\newenvironment{prop}{\begin{proposition}}{\end{proposition}}
\title{\textbf{Magnetically Insulated Diode: Existence of Solutions and Complex Bifurcation. I}}
\author{
\textbf{D.N. Sidorov}\\
\small Sino-Russian Joint Research Center for Advanced Energy and Power Systems,\\
\small Energy Systems Institute, Russian Academy of Science, Irkutsk, Russia\\
\small \texttt{dsidorov@isem.irk.ru}
\and
\textbf{A.V. Sinitsyn}\\
\small Departamento de Matemáticas, Universidad Nacional de Colombia,\\
\small Bogotá, Colombia\\
\small \texttt{asinitsyne@unal.edu.co}
\and
\textbf{O.D. Toledo Leguizamón}\\
\small Ingeniería de Sistemas y Computación, Universidad Nacional de Colombia,\\
\small Bogotá, Colombia\\
\small \texttt{otoledo@unal.edu.co}
\and
\textbf{L. Wang}\\
\small Department of Electrical Engineering and Automation,\\
\small Harbin Institute of Technology, Harbin, China\\
\small \texttt{wlg2001@hit.edu.cn}
}
\date{}
\begin{document}

\maketitle

\begin{abstract}

In order to avoid the electron oscillation of the cathode and enhance the work
efficiency of a vacuum diode, an approach for analyzing the solutions and complex bifurcation
has been proposed and used to determine the optimal trajectory of electron motion of the
vacuum diode. This work is focusing on the stationary self-consistent problem of magnetic insulation in a space-charge-limited vacuum diode, modeled by a singularly perturbed 1.5-dimensional Vlasov-Maxwell system. We focus on the insulated regime, characterized by the reflection of electrons back toward the cathode at a point 
$x^{*}.$
The analysis proceeds in two primary stages. First, the original Vlasov-Maxwell system is reduced to a nonlinear singular system of ordinary differential equations governing the electric and magnetic field potentials. Subsequently, this system is further reduced to a novel nonlinear singular ODE for an effective potential 
$\theta(x).$ 
 The existence of non-negative solutions to this final equation is established on the interval 
$[0, x^{*})$, where 
$\theta(x)>0$. This is achieved by reformulating the associated initial value problem into a system of coupled nonlinear Fredholm integral equations and proving the existence of fixed points for the corresponding operators.
The most significant and previously unexplored case occurs when 
$\theta(x)<0$ on the interval 
$(x^{*}, 1]$, which corresponds to the fully insulated diode. For this regime, we present a novel numerical analysis of complex solution bifurcations, examining their dependence on system parameters and boundary conditions. Bifurcation diagrams illustrating the solution 
$\theta(x)$ as a function of the free boundary 
$x^{*}$
  is constructed, and the insulated diode spacing is determined.
\end{abstract}

\noindent\textbf{Keywords:} Relativistic Vlasov-Maxwell system;  magnetic insulation, effective potential; insulated diode; initial value problem; singular boundary value problem;  contractive mapping; fixed point theorem; complex numerical bifurcation; cubic complex equation.\\ \\{\bf 2010 Mathematics Subject Classification:} 35Q83, 34A12, 34B15, 45B05, 45D05, 47H09, 47H10, 47H11.

\newpage

\section {Introduction}

The reliable operation of modern electrical power grids and industrial systems is fundamentally dependent on advanced power electronic devices responsible for energy conversion and control \cite{book1}. Among these components, the vacuum diode plays a critical role in high-power applications, primarily functioning to enforce unidirectional current flow. A significant technological challenge, however, arises at extreme operational voltages, where the regulation of space-charge-limited electron flow becomes paramount to prevent operational instability and device failure.

A principal method for mitigating this issue is magnetic [7] insulation, a technique wherein an applied magnetostatic field is utilized to confine electron trajectories. This process effectively establishes a potential barrier that reflects electrons back toward the cathode, thereby inhibiting anodic current and enabling efficient diode operation at voltages that would otherwise lead to breakdown.

While the physical principle of magnetic insulation is established, the precise prediction of its onset and stability constitutes a non-trivial mathematical problem. The self-consistent interaction of charged particles with electromagnetic fields is governed by the Vlasov-Maxwell system [17], [10], [3], a set of kinetic equations whose complexity necessitates significant reduction for analytical and numerical treatment. This work addresses this challenge through the development and analysis of a simplified, yet physically representative, model derived from the singularly perturbed limit of the governing system.

Our investigation concentrates on the magnetically insulated diode (MID) regime, a previously underexplored scenario characterized by the consistent deflection of electrons. The primary objectives of this study are twofold:
1.  To establish a rigorous mathematical framework for the resulting nonlinear boundary value problem and prove the existence of physically admissible solutions, thereby verifying the feasibility of the insulated state.
2.  To conduct a comprehensive bifurcation analysis [12], [11], examining how the system's qualitative behavior evolves with key parameters. Through numerical simulation and the construction of bifurcation diagrams, we delineate the critical thresholds and parameter regions associated with the transition to magnetic insulation.

The implications of this research extend beyond theoretical interest. By providing a clarified mathematical description of the insulation phenomenon, this study contributes to the foundational knowledge required for designing next-generation power converters. The results offer a systematic framework for optimizing these components, with the potential to enhance their operational stability, energy density, and cost-effectiveness, thereby improving the reliability and performance of future power systems.

\section{Motivation}

The study of magnetically insulated diodes comes very natural in the area of high power vacuum electronics and plasma physics. When the magnetic field is strong enough, the electrons emitted at the cathode are not able to reach the anode and instead are deflected back, creating what is called the regime of magnetic insulation. This regime is very important because it decides if the current is only limited by space charge effects, or by the combination of electric and magnetic fields acting together.  

From the mathematical point of view, the description of this situation leads to a singularly perturbed Vlasov--Maxwell system in 1.5 dimensions, which in the singular limit is reduced to a nonlinear system of ODEs for the electric and magnetic potentials. A particular difficulty of this formulation is the presence of a free boundary $x^{*}$, the point where the effective potential changes sign. If $\Theta(x)$ remains positive in the whole interval, then the diode is non--insulated and all electrons reach the anode, while if $\Theta(x)$ becomes negative after some $x^{*}$ then the diode turns insulated and the electrons accumulate close to the cathode, forming a high energy layer.

\begin{figure}[h!]
\centering

\begin{subfigure}[b]{0.5\textwidth}
    \centering
    \includegraphics[height=3.5cm]{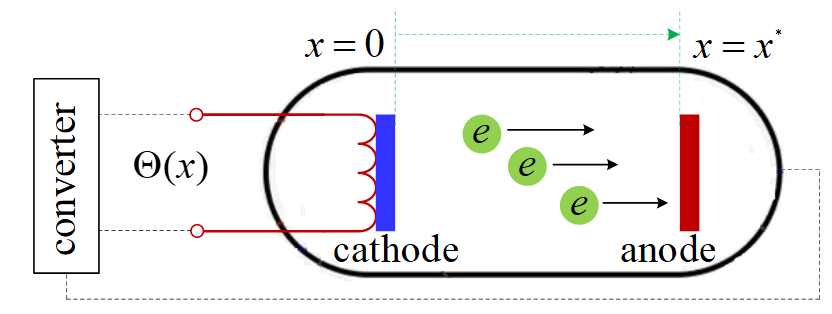}
    \caption{}
    \label{fig:left_image}
\end{subfigure}
\hfill
\begin{subfigure}[b]{0.45\textwidth}
    \centering
    \resizebox{!}{3.5cm}{
    \begin{tikzpicture}
        \draw[->] (0,0) -- (4,0) node[right] {$x$};
        \draw[->] (0,-2) -- (0,2) node[above] {$\Theta(x)$};

        \draw[dashed] (3,-1.5) -- (3,2);
        \node at (3.35,-0.1) [below] {$x^{*}$};

        \draw[thick] (0,1.5) .. controls (3,1.5) and (3,0) .. (3,0);
        \draw[thick] (0,-1.5) .. controls (3,-1.5) and (3,0) .. (3,0);

        \node at (0,0) [left] {$0$};
    \end{tikzpicture}
    }
    \caption{}
    \label{fig:right_tikz}
\end{subfigure}

\caption{
(a) Experimental configuration illustrating the magnetically insulated regime, where the cathode and anode are separated by a magnetic field that restricts electron motion. 
(b) Corresponding schematic of the effective potential $\Theta(x)$. 
Electrons emitted from the cathode at $x=0$ move within the region where $\Theta(x)>0$, reaching the turning point $x^{*}$ where $\Theta(x^{*})=0$ before being reflected back. 
For $x>x^{*}$, the potential becomes negative ($\Theta(x)<0$), representing a forbidden region where no electron trajectories can exist.
}
\label{fig:combined_figure}
\end{figure}

The physical scenario of magnetic insulation is schematically illustrated in Figure~\ref{fig:combined_figure}. The effective potential $\Theta(x)$, which combines both electric and magnetic field contributions, determines the electron dynamics. In the insulated regime shown, $\Theta(x)$ starts at zero at the cathode, rises to a maximum value $\Theta_1$, and then decreases, crossing zero at the free boundary point $x^{}$. Electrons emitted from the cathode can only propagate in the region where $\Theta(x)\geq 0$ ($0\leq x\leq x^{}$), reaching their turning point at $x^{}$ where $\Theta(x^{})=0$, and are subsequently reflected back toward the cathode. The region $x>x^{*}$, where $\Theta(x)<0$, is inaccessible to electrons, creating the characteristic electron layer near the cathode that defines the magnetically insulated state.

The symmetry of the problem and the fact that previous works already gave important reductions of the Vlasov--Maxwell system, inspired us to look at the model from a different angle. This new vision shows that even if the equations are very complex, there are structures hidden in them that can be used to simplify or to find equivalent formulations. At the same time, the complexity of the system becomes a valuable challenge, because it forces the use of modern computational tools. In this sense, numerical simulations and the construction of graphical displays, such as bifurcation diagrams, are not just complementary but necessary in order to support and reinforce the analytical results obtained.  

Although this configuration is clearly relevant from the physical point of view, the theory around magnetically insulated diodes is still far from complete. Questions of existence of solutions, the nature of bifurcations, or the determination of critical current values, are still open. Addressing these problems does not only give new insight into the qualitative behavior of the diode, but also provides a base for future applications in transport of energy, confinement of particles, and the development of high power electronic devices.
This article is an extended and refined version of 
our paper \cite{paper1}.

The remainder of this paper is structured as follows. In \textit{Setting of the problem and derivation of system (I)}, we establish the relativistic Vlasov–Maxwell model for the diode, perform the nondimensional rescaling, and derive the nonlinear limit system governing the coupled electrostatic and magnetic potentials. In \textit{The Cauchy problem for the limit system (I)}, we reformulate the equations using the substitution $(u,v)$, carefully define the notion of solution for the singular initial value problem, and prove the existence of nonnegative solutions by means of contraction mappings and integral equation techniques. In \textit{The Isolated Case}, we focus on the magnetically insulated regime $\Theta < 0$, derive the effective cubic equation governing the diode potential, and classify the bifurcation structure through discriminant analysis and explicit algebraic solutions. The section also explores different parameter regimes, separating algebraically possible solutions from physically admissible ones. Finally, in \textit{Conclusions}, we summarize our results and emphasize the interplay between analytical derivations and computational bifurcation diagrams in describing the operating regimes of magnetically insulated diodes.

\section{Setting of the problem and derivation of system (I)}

We consider a plane diode consisting of two perfectly conducting electrodes, 
a cathode $(X=0)$ and anode $(X=L)$ supposed to be infinite planes, parallel to
$(Y,Z)$.

The electrons, with charge $-e$ and mass $m$, are emitted at the cathode 
and submitted to an applied electromagnetic field
$$
{\bf E}_{\rm ext}=E_{\rm ext}{\bf X},\;\;\;\;{\bf B}_{\rm ext}=B_{\rm ext}{\bf Z}
$$
such that $E_{\rm ext}\le 0$ and $B_{\rm ext}\ge 0$.

We shall assume that the electron distribution function $F$ does not depend on
$Y$ and that the flow is stationary and collisionless. The system is then 
described by the so called 1.5 dimensional VM model
\begin{align}
&V_{X}\frac{\partial F}{\partial X}+e\Biggl ( \frac{d\Phi}{d X}-V_{Y}\frac{d A}
{d X}\Biggr )\frac{\partial F}{\partial P_{X}}+eV_{X}\frac{d A}{d X}\frac{\partial F}
{\partial P_{Y}}=0,  \label{(2.1)}\\
& \frac{d^{2}\Phi}{d X^{2}}=\frac{e}{\epsilon_{0}}N(X), \qquad X\in (0,L),
\label{(2.2)}\\
&\frac{d^{2}A}{d X^{2}}=-\mu_{0}J_{Y}(X), \qquad X\in(0,L) \label{(2.3)}
\end{align}
subject to the following boundary conditions:
\begin{align}
&F(0,P_{X},P_{Y})=G(P_{X},P_{Y}), \;\;P_{X}>0,  \label{(2.4)}\\
&F(L,P_{X},P_{Y})=0, \;\;\;\;\;\;\;\;\;\;\;\;\;\;\;\;P_{X}<0, \label{(2.5)}\\
&\Phi(0)=0,\;\;\Phi(L)=\Phi_{L}=-LE_{ext}, \label{(2.6)}\\
&A(0)=0,\;\;A(L)=A_{L}=L B_{ext}, \label{(2.7)}
\end{align}
where formulas \eqref{(2.4)} and \eqref{(2.5)} describe the injection profile at the cathode and 
at the anode, respectively, $E=-d\Phi/d X$, $B=-d A/d X$. The relationship between 
momentum and velocity is then given by the relativistic relations
$$
{\bf V}({\bf P})=\frac{{\bf P}}{\gamma m}, \;\;\;
\gamma=\sqrt{1+\frac{|{\bf P}|^{2}}{m^{2}c^{2}}},
$$
$$
{\bf V}=(V_{X},V_{Y}), \;\;\;{\bf P}=(P_{X},P_{Y}), \;\;|{\bf P}|^{2}=
P_{X}^{2}+P_{Y}^{2},
$$
or
$$
{\bf V}({\bf P})=\nabla_{{\bf P}}E({\bf P}),
$$
where $E$ is the relativistic kinetic energy and $c$ is the speed of light.

In the system \eqref{(2.1)}--\eqref{(2.3)}, the macroscopic quantities, namely the particle 
density $N$; $X$ and $Y$ components of the current density $J_{X}$, $J_{Y}$,
are respectively given by the following formulas
\begin{align}
&N(X)=\int\limits_{R^{2}}F(X,P_{X},P_{Y})dP_{X}dP_{Y}, \label{(2.8)}\\
&J_{X}=-e\int\limits_{R^{2}}V_{X}({\bf P})F(X,P_{X},P_{Y})dP_{X}dP_{Y}, \label{(2.9)}\\
&J_{Y}(X)=-e\int\limits_{R^{2}}V_{Y}({\bf P})F(X,P_{X},P_{Y})dP_{X}dP_{Y}. \label{(2.10)}
\end{align}
Here, $\epsilon_{0}$ and $\mu_{0}$ are  the vacuum permitivity and 
permeability respectively. 

The 1.5 model describes two principal regimes. For a strong applied magnetic field,
electrons do not reach the anode and come back to the cathode leading to a 
vanishing $J_{X}$ component of current density. When the applied magnetic field is not strong enough to insulate 
the diode, $J_{X}$ does not vanish and our model can be viewed as an 
approximate of the Maxwell equations.

Similarly to \eqref{(2.8)}--\eqref{(2.10)}, we define the moments associated with the incoming particle 
distribution function by
\begin{align}
&N^{G}=\int\limits_{R^{2}_{+}}G(P_{X},P_{Y})dP_{X}dP_{Y},\label{(2.11)}\\
&J_{X}^{G}=-e\int\limits_{R_{+}^{2}}V_{X}({\bf P})G(P_{X},P_{Y})dP_{X}dP_{Y}, \label{(2.12)}\\
&J_{Y}^{G}=-e\int\limits_{R_{+}^{2}}V_{Y}({\bf P})G(P_{X},P_{Y})dP_{X}dP_{Y}, \label{(2.13)}\\
&T^{G}=\int\limits_{R_{+}^{2}}E({\bf P})G(P_{X},P_{Y})dP_{X}dP_{Y}, \label{(2.14)}
\end{align}
where $R^{2}_{+}=\{(P_{X},P_{Y})\in R^{2}, P_{X}>0\}$,  and the thermal 
emission velocity  is $V^{G}=\sqrt{\frac{T^{G}}{mN^{G}}}$. The quantities 
\eqref{(2.11)}--\eqref{(2.14)}, respectively define the incoming particle density, 
$X$ and $Y$ components of the incoming current density and incoming particle kinetic 
energy.

In order to get a better insight into the behavior of the diode, we write the model 
\eqref{(2.1)}--\eqref{(2.7)} in dimensionless variables as P. Degond and P.-A. Raviart \cite{Degond1991,Degond1992}.

Let the diode be controlled in the Child-Langmuir regime [9]. In such a situation, the 
thermal velocity $V_{G}$ is much smaller than the typical drift velocity 
supposed to be of the order of the speed of light $c$. Letting 
$\varepsilon=\frac{V_{G}}{c}$, we shall assume that
$$
f(0,p_{x},p_{y})=g^{\varepsilon}(p_{x},p_{y})=\frac{1}{\varepsilon^{3}}g(
\frac{p_{x}}{\varepsilon}, \frac{p_{y}}{\varepsilon}), \;\;p_{x}>0,
$$
where $g$ is a given profile. The dimensionless system reads
\begin{align*}
&v_{x}\frac{\partial f^{\varepsilon}}{\partial x}+\biggl (\frac{d\varphi^{\varepsilon}}
{d x}-v_{y}\frac{d a^{\varepsilon}}{d x}\biggr )\frac{\partial f^{\varepsilon}}
{\partial p_{x}}+v_{x}\frac{d a^{\varepsilon}}{d x}\frac{\partial f^{\varepsilon}}{
\partial p_{y}}=0, \quad
(x,p_{x},p_{y})\in(0,1)\times R^{2},\\
&\frac{d^{2}\varphi^{\varepsilon}}{d x^{2}}=n^{\varepsilon}(x), \;\;x\in(0,1),
\\
&\frac{d^{2}a^{\varepsilon}}{d x^{2}}=j_{y}^{\varepsilon}(x), \;\;x\in(0,1),
\\
&n^{\varepsilon}(x)=\int\limits_{R^{2}_{+}}f^{\varepsilon}(x,p_{x},p_{y})dp_{x}dp_{y},
 \\
&j^{\varepsilon}_{y}(x)=\int\limits_{R^{2}_{+}}v_{y}f^{\varepsilon}(x,p_{x},p_{y})
dp_{x}dp_{y}=
\int\limits_{R_{+}^{2}}\frac{p_{y}}{\sqrt{1+|p|^{2}}}f^{\varepsilon}(x,p_{x},p_{y})dp_{x}
dp_{y},  \\
&f^{\varepsilon}(0,p_{x},p_{y})=g^{\varepsilon}(p_{x},p_{y})=\frac{1}
{\varepsilon^{3}}g(\frac{p_{x}}{\varepsilon},\frac{p_{y}}{\varepsilon}), \quad
p_{x}>0,\\
&f^{\varepsilon}(1,p_{x},p_{y})=0, \quad p_{x}<0, \\
&\varphi^{\varepsilon}(0)=0, \quad \varphi^{\varepsilon}(1)=\varphi_{L},
\\
&a^{\varepsilon}(0)=0, \quad a^{\varepsilon}(1)=a_{L}.  
\end{align*}
To derive the limit model (I) at $\varepsilon\rightarrow 0$, we consider 
the various invariants of the problem. The following two quantities are 
constants of motion
\begin{align*}
W^{\varepsilon}(x,p)=&\Xi(p)-\varphi^{\varepsilon}(x)- {\rm the\;\;
electron\;\;energy},\\
{\cal P}^{\varepsilon}_{y}(x,p)=&p_{y}-a^{\varepsilon}(x)- {\rm the\;\;
canonical\;\; momentum},  
\end{align*}
which means that on each electron trajectory (in the phase space), the 
above quantities are constant. Let us denote $f$, $n$, $a$, $j$, $\varphi$
$\ldots$ the limit as $\epsilon$ tends to zero $f^{\varepsilon}$, 
$n^{\varepsilon}$, $\ldots$. Since, in the limit $\varepsilon=0$, electrons  
are injected with zero velocity, it is readily seen that the electron energy
$W$ and canonical momentum ${\cal P}_{y}$ simultaneously vanish. Consequently,
\begin{align*}
p_{y}(x)=&a(x), \\
(p_{x}(x))^{2}=&(1+\varphi(x))^{2}-1-(a(x))^{2}
\end{align*}
and the following identities hold:
\begin{align*}
v_{x}(x)=&\frac{p_{x}(x)}{\sqrt{1+{\bf p}^{2}(x)}}=\frac{p_{x}(x)}{1+\varphi(x)},\\
v_{y}(x)=&\frac{v_{y}(x)}{\sqrt{1+{\bf p}^{2}(x)}}=\frac{a(x)}{1+\varphi(x)}.
\end{align*}
Let us now define the \emph{effective potential} by
$$
\Theta(x)=(1+\varphi(x))^{2}-1-(a(x))^{2}. 
$$
Electrons do not enter the diode unless the effective potential $\Theta$
is non-negative in the vicinity of the cathode. Therefore, we always have
$\Theta'(0)\ge 0$. Let $\Theta_{L}$ be the value of $\Theta$ at the anode
$$
\Theta_{L}=(1+\varphi_{L})^{2}-1-a_{L}^{2}.  
$$
If $\Theta_{L}<0$, electrons cannot reach the anode $x=1$; they are 
reflected by the magnetic forces back to the cathode and the diode is said to 
be \emph{magnetically insulated}. If $\Theta$ is non-negative, then all electrons 
are reached the anode and the diode is said to be \emph{noninsulated}.

We assume that
$$
\forall x\in(0,1], \;\;\;\Theta(x)>0, \;\;\;\Theta(1)-\Theta(0)=\Theta_{L}>0.
$$
Since no electron is injected at the anode, $j^{-}_{x}$ vanishes.
Hence
$$
j_{x}=j^{+}_{x}=\int\limits_{R^{2}_{+}}v_{x}f(x,p_{x},p_{y})dp_{x}dp_{y}
$$
and the distribution function is that of a monokinetic beam issued from 
the cathode $x=0$ with vanishing initial velocity
$$
f(x,{\bf P})=n(x)\delta\biggl (p_{x}-\sqrt{\Theta(x)}\biggr )\delta(p_{y}-a(x)).
$$
Therefore
$$
n(x)=\frac{j_{x}}{v_{x}(x)}=j_{x}\frac{1+\varphi(x)}{\sqrt{\Theta(x)}}, \;\;\;
j_{y}(x)=n(x)v_{y}(x)=j_{x}\frac{a(x)}{\sqrt{\Theta(x)}}.
$$
Inserting these expressions into Poisson's and Ampere's equations 
\eqref{(2.2)} and \eqref{(2.3)} gives
\begin{equation*}
\begin{array}{l}
\frac{d^{2}\varphi}{dx^{2}}(x)=j_{x}\displaystyle\frac{1+\varphi(x)}{\sqrt{(1+\varphi(x))^{2}-
1-(a(x))^{2}}}, \;\;\;\varphi(0)=0, \;\;\varphi(1)=\varphi_{L},\\
\frac{d^{2}a}{dx^{2}}(x)=j_{x}\displaystyle\frac{a(x)}{\sqrt{(1+\varphi(x))^{2}-1-(a(x))^{2}}},\;\;\;
a(0)=0, \;\;\;a(1)=a_{L}.
\end{array}
\eqno {\rm (I)}
\end{equation*}
In system (I) the unknowns are the electrostatic potential $\varphi$,
the magnetic potential $a$ and the current $j_{x}$ (which does not depend 
on $x$).

\subsection*{Magnetically insulating diode}

 Because in this case $\Theta<0$, then  the effective potential is repulsive. Electrons emitted from the cathode with zero initial velocity cannot reach the anode. They are deflected back to the cathode at some (unknown in advance) point of diode $x^{*}$  so that
\begin{equation}\label{MID-1}
\begin{array}{ll}
\forall x\in [0,x^{*}],\quad & \Theta(x)\geq 0 \qquad{\rm and} \qquad n(x)>0,\\
\forall x\in (x^{*},1], \quad & \Theta(x)<0\qquad {\rm and} \qquad f(x,p_{x},p_{y})=0.  
\end{array}    
\end{equation}
Note that the equations for the electric and magnetic potentials depend on the location of the points at which the effective potential $\Theta$ vanishes. P. Degond with colleagues [1] considered a special case: the \emph{quasilaminar model}, where $\Theta$ vanishes only at the boundaries of the electron layer. In the quasilaminar case, electrons leave the cathode, reach point $x^{*}$ and then deviate to the cathode.
The quasilaminar model is described by a system of nonlinear equations as follows:

On the interval $(0, x^{*})$ the system is given by 
\begin{equation}\label{MID-2}
\begin{array}{ll}
\frac{d^{2}\varphi}{dx^{2}}(x)= j_{x}\displaystyle\frac{1+\varphi(x)}{\sqrt{(1+\varphi(x))^{2}-1- (a(x))^{2}}},& \quad \forall x\in (0, x^{*}), \\
\frac{d^{2}a}{dx^{2}}(x)= j_{x}\displaystyle\frac{a(x)}{\sqrt{(1+\varphi(x))^{2}-1- (a(x))^{2}}},& \quad \forall x\in (0, x^{*}),    
\end{array} 
\end{equation}
and on the interval $(x^{*}, 1)$ the model is described by the system
\begin{equation}\label{MID-3}
\begin{array}{ll}
&\displaystyle\frac{d^{2}\varphi}{dx^{2}}(x)= \displaystyle\frac{d^{2}a}{dx^{2}}(x)=0, \qquad \forall x\in (x^{*}, 1), \\
&\varphi(0)=0, \quad \varphi(1)=\varphi_{L}, \quad a(0)=0, \quad a(1)=a_{L},           
\end{array}                
\end{equation}
with the condition that functions $\varphi, a$ and their first derivatives are continuous at $x^{*}$. 

Inside the electron layer $(0,x^{*})$ system \eqref{MID-2} is identical to system \eqref{MID-3} for the case of a noninsulating diode.

We consider a general case when a free point (free boundary) $x^{*}$ can move from anode to cathode and back to the anode. In this case, the insulated diod described by the system (I) on the interval $[0, 1]$.  The open problem is to find the position of the  point $x^{*}$ depending on the boundary conditions at the anode. What is the distance from the cathode to $x^{*}$. This interval $0 \leq x^{*}\leq d$ can form a high-energy layer of electrons near the cathode, which can be transmitted over long distances: transfer of energy. Also, the electron layer near cathode  can act as a magnetic wall. 

We split the problem into two intervals - the first one is IVP on  $[0, x^{*})$ with $\Theta(x)\geq 0$ and second interval $(x^{*}, 1]$ with $\Theta(x)\leq 0$ (Fig. 1). 

There is also a third regime described in the papers [4], [19], [13]. It consists in the fact that electrons exist that do not reach the anode and are not deflected to the cathode.  Electron flow oscillated between magnetically insulated and non-insulated states with this bimodal behavior eventually settling into a steady-state with characteristics of insulated and non-insulated flows.

\section{The Cauchy problem for the limit system (I)}

The regime of ``insulated'' diode on the interval $[0, x^{*}), \Theta(x)>0$ is described by the following nonlinear two-point boundary value problem (I). Almost at this interval, the insulated mode coincides with the non-insulated one.
%
After the substitution $\varphi + 1=:u$, $a=:v$ the equations of magnetic insulation have the form
\begin{align*}
u'' =& j_{x}\frac{u}{\sqrt{u^{2} - 1 - v^{2}}}, \;\;\;\; u(0) = 1, \;\;\; u(1) = \varphi_{L} + 1 =: \alpha, \;\;  u'(0) = 0\\
v'' =& j_{x}\frac{v}{\sqrt{u^{2} - 1 - v^{2}}}, \;\;\;\; v(0) = 0, \;\; v(1) = a_{L}, \;\;  v'(0) = \beta>0.
\end{align*}

First, we consider the initial value problem with $u(0) = 1, u'(0) = 0, v(0) = 0, v'(0) = \beta$. The second step, we intend to investigate the boundary value problem with the mode of magnetic insulation.

We define the effective potential $\theta(u,v) =: u^{2} - 1 - v^{2}$. 

For $\Omega =  \{(u,v) \in R^{2}; \; \theta(u,v)\le 0\}$ the right hand side is not defined.  In particular, for a solution $(u,v)$, $\theta(u(0),v(0)) = 0$, 
so the right hand side is not defined for $x = 0$. Therefore the concept of a solution must be carefully defined. We give a definition of a solution for the initial value problem.

\begin{defi} A function $(u,v) = (u(x),v(x))$ is a solution of the initial value problem (IVP) 
\begin{equation}\label{IVP1}
\begin{array}{l}
u''(x) = j_{x}\displaystyle\frac{u}{\sqrt{u^{2} - 1 - v^{2}}},     \\
v''(x) = j_{x} \displaystyle \frac{v}{\sqrt{u^{2} - 1 - v^{2}}},  
\end{array}
\end{equation}
with the initial conditions
\begin{equation}\label{IVP1'}
\begin{array}{ll}
 u(0) = 1,& \qquad u'(0) = 0\in R  \\
 v(0) = 0,&  \qquad v'(0) = \beta \in R
\end{array}
\end{equation}
on the interval $[0,\varepsilon)$ if
\begin{itemize}
\item[a)]
 $u,v \in C^{1}[0,\varepsilon) \cap C^{2}(0,\varepsilon)$,
\item[b)] $\theta(u(x),v(x))>0$ for $x\in (0,\varepsilon)$ that is, $(u(x), v(x))\notin \Omega$ for $x\in (0,\varepsilon)$,
\item[c)] $(u,v)$ satisfies \eqref{IVP1} on $(0,\varepsilon)$,
\item[d)] the initial conditions hold in \eqref{IVP1'}.
\end{itemize}
\end{defi}
\begin{prop}\label{prop1} Let $(u,v)$ be a solution of the IVP \eqref{IVP1}, \eqref{IVP1'} on $[0,\varepsilon)$ and define $\theta =: u^{2} - 1 - v^{2}$. Then 
\begin{itemize}
\item[i)] $\theta(x) \in C^{1}[0,\varepsilon)\cap C^{2}(0,\varepsilon)$,

\item[ii)] $\theta(0) = \theta'(0) = 0$ and $\theta(x)>0$ on $(0,\varepsilon)$,
\item[iii)] $\theta$ satisfies the differential equation
\begin{align}\label{prop3.1eq2}
\theta'' =& j_{x}\left(6\sqrt{\theta} +\frac{2}{\sqrt{\theta}} - 4\gamma\right), \qquad \gamma =: -\frac{1}{2j_{x}}\beta^{2}.\nonumber\\
=& \frac{6j_{x}}{\sqrt{\theta}}\left(\frac{1}{3} - \frac{2}{3}\gamma \sqrt{\theta}  + \theta\right) \quad {\rm on}\;\; (0,\varepsilon).  
\end{align}
\end{itemize}
\end{prop}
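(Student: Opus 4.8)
The plan is to handle (i) and (ii) by direct inspection and to derive (iii) from a single first integral of the system \eqref{IVP1}. For (i), since $\theta=u^{2}-1-v^{2}$ is a fixed polynomial in $(u,v)$ and $u,v\in C^{1}[0,\varepsilon)\cap C^{2}(0,\varepsilon)$ by part (a) of the definition, $\theta$ inherits exactly this regularity: $\theta'=2uu'-2vv'$ is continuous on $[0,\varepsilon)$, and on $(0,\varepsilon)$ the products $uu'$ and $vv'$ are $C^{1}$, so $\theta'\in C^{1}(0,\varepsilon)$, i.e. $\theta\in C^{2}(0,\varepsilon)$. For (ii), plugging the data \eqref{IVP1'} into $\theta$ and $\theta'=2uu'-2vv'$ gives $\theta(0)=1-1-0=0$ and $\theta'(0)=2\cdot1\cdot0-2\cdot0\cdot\beta=0$, while the inequality $\theta(x)>0$ on $(0,\varepsilon)$ is nothing but condition (b) of the definition of a solution.

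For (iii), work on $(0,\varepsilon)$, where $u,v\in C^{2}$, and differentiate $\theta'=2uu'-2vv'$ once more: $\theta''=2(u')^{2}-2(v')^{2}+2uu''-2vv''$. Substituting \eqref{IVP1} and using $\sqrt{u^{2}-1-v^{2}}=\sqrt{\theta}$ together with $u^{2}-v^{2}=\theta+1$ turns the last two terms into $2uu''-2vv''=\dfrac{2j_{x}(u^{2}-v^{2})}{\sqrt{\theta}}=2j_{x}\sqrt{\theta}+\dfrac{2j_{x}}{\sqrt{\theta}}$. The remaining piece $2\bigl((u')^{2}-(v')^{2}\bigr)$ is supplied by a first integral: multiplying the first equation of \eqref{IVP1} by $u'$, the second by $v'$, subtracting, and using $\dfrac{d}{dx}\sqrt{\theta}=\dfrac{uu'-vv'}{\sqrt{\theta}}$ yields $u'u''-v'v''=j_{x}\,\dfrac{d}{dx}\sqrt{\theta}$, hence $\dfrac{d}{dx}\!\left[\tfrac12\bigl((u')^{2}-(v')^{2}\bigr)-j_{x}\sqrt{\theta}\right]=0$ on $(0,\varepsilon)$. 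Passing to the limit $x\to0^{+}$, where $u'\to0$ and $v'\to\beta$ by (a) and $\sqrt{\theta}\to0$ by (ii), fixes the integration constant and gives $(u')^{2}-(v')^{2}=2j_{x}\sqrt{\theta}-\beta^{2}$. Inserting this into the expression for $\theta''$ and collecting terms produces $\theta''=6j_{x}\sqrt{\theta}+\dfrac{2j_{x}}{\sqrt{\theta}}-2\beta^{2}$; rewriting $\beta^{2}$ through $\gamma$ and then factoring out $j_{x}$, respectively $6j_{x}/\sqrt{\theta}$ from $\theta+\tfrac13-\tfrac23\gamma\sqrt{\theta}$, puts this into the two equivalent forms of \eqref{prop3.1eq2}.

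The only step that is not purely mechanical, and where I would take the most care, is the evaluation of the integration constant of the first integral at the singular endpoint $x=0$: the antiderivative identity holds only on the open interval $(0,\varepsilon)$, since $1/\sqrt{\theta}$ is unbounded near $0$, so the constant has to be recovered by a one-sided limit that relies precisely on the $C^{1}$-up-to-the-boundary regularity in part (a) and on $\theta(0)=0$ from (ii). Everything else is differentiation and bookkeeping.
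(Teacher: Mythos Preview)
Your proof is correct and follows essentially the same route as the paper: derive $\theta''$ in terms of $(u')^{2}-(v')^{2}$ and $uu''-vv''$, handle the second term directly from \eqref{IVP1}, and obtain the first via the energy-type identity $(u'^{2}-v'^{2})'=2j_{x}(\sqrt{\theta})'$, then evaluate the constant from the initial data. If anything, your treatment of the integration constant at the singular endpoint $x=0$ is more careful than the paper's, which simply writes ``integration from $0$ to $x$'' without commenting on the fact that the identity is established only on the open interval.
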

\begin{proof}
 i) and ii) follow directly from a), b) and 
 $$
 \theta'(0) = 2u(0)u'(0) - 2v(0)v'(0) = 2\cdot 1\cdot 0 - 2\cdot 0\cdot \beta = 0
 $$ with d).
To show iii), we have
\begin{align}\label{prop3.1eq3}
\theta(x) =& u^{2}(x) - 1 - v^{2}(x) \nonumber\\
\theta'(x) =& 2(u(x)u'(x) - v(x)v'(x)) \nonumber\\
\theta''(x) =& 2(u'(x)^{2} - v'(x)^{2}) + 2(u(x)u''(x) - v(x)v''(x)).   
\end{align}
Now
\begin{align*}
\left(u'(x)^{2} - v'(x)^{2}\right)^{\prime} =& 2u'(x)u''(x) - 2v'(x)v''(x)\\
=& j_{x} \frac{2u'(x)u(x) - 2v'(x)v(x)}{\sqrt{u(x)^{2} - 1 - v(x)^{2}}}\\
 =& 2j_{x}(\sqrt{u(x)^{2} - 1 - v(x)^{2}})^{'} \\
 =& 2j_{x}(\sqrt{\theta(x)})^{'}.
\end{align*}
Integration from $0$ to $x$ gives
$$
u'(x)^{2} - v'(x)^{2} - (\underbrace{u'(0)^{2} - v'(0)^{2})}_{=\beta^{2} = -2j_{x}\gamma} = 2j_{x}\sqrt{\theta(x)} - 2j_{x}\underbrace{\sqrt{\theta(0)}}_{= 0},
$$
hence
\begin{equation}\label{prop3.1eq4}
u'(x)^{2} - v'(x)^{2} = 2j_{x}(\sqrt{\theta(x)} - \gamma).
\end{equation}
Furthermore we have
\begin{align}\label{prop3.1eq5}
u(x)\cdot u''(x) - v(x)\cdot v''(x) =& j_{x}\frac{u(x)^{2} - v(x)^{2}}{\sqrt{\theta(x)}} \nonumber\\
 =& j_{x}\frac{\theta(x) + 1}{\sqrt{\theta(x)}} \nonumber\\
 =&j_{x}\left(\sqrt{\theta(x)} + \frac{1}{\sqrt{\theta(x)}}\right ). 
\end{align}
We insert \eqref{prop3.1eq4} and \eqref{prop3.1eq5} into \eqref{prop3.1eq3},  and have
\begin{align*}
\theta''(x) =& 4j_{x}\left(\sqrt{\theta(x)} - \gamma\right) + 2j_{x}\left ( (\sqrt{\theta(x)} + \frac{1}{\sqrt{\theta(x)}}\right)\\
=& j_{x} \left(6\sqrt{\theta(x)} + \frac{2}{\sqrt{\theta(x)}} - 4\gamma\right)
\end{align*}
which is iii).
\end{proof}
We will need the following Lemma.

\begin{lem}\label{lem1} For $\gamma \in R$ define
$$
W_{1}(s) =: 1 - \gamma\sqrt{s} + s,\;\;\;\; s>0
$$
and let $[0,b)$ be the maximal interval on which
$$
I(z) =: \int\limits_{0}^{z}\frac{ds}{s^{1/4}\sqrt{W_{1}(s)}}
$$
exists. Then $I$ is strictly increasing, 
$$
I(b) =: \lim_{z\rightarrow b} I(z) \le \infty
$$
 exists with $I^{-1}: [0, I(b)) \rightarrow [0,b)$. We have
$$
I'(z) = \frac{1}{z^{1/4}\cdot \sqrt{W_{1}(z)}}, \;\;\;\;  I''(z) = -\frac{3}{4}\frac{W_{2}(z)}{z^{5/4}\sqrt[3]{W_{1}(z)}},
$$
where
$$
W_{2}(s) = \frac{1}{3} -\frac{2}{3}\gamma\sqrt{s} + s.
$$
The zeros of $W_{1}$ are:
$$
s_{11} =\left [\frac{\gamma}{2}\left (1 - \sqrt{1 - \frac{4}{\gamma^{2}}}\right )\right ]^{2}, \;\;\;\;   s_{12} = \left [\frac{\gamma}{2}\left (1 +\sqrt{1 - \frac{4}{\gamma^{2}}}\right )\right ]^{2}.
$$
$W_{1}$ has no real zeros for $\gamma < 2$, one zero $s_{11} = s_{12} = 1$ for $\gamma = 2$, two different real zeros $0<s_{11}<1<s_{12}$ for $\gamma>2$.

The  zeros of $W_{2}$ are: 
$$
s_{21} = \left [\frac{\gamma}{3}\left(1 - \sqrt{1 - \frac{3}{\gamma^{2}}}\right )\right ]^{2}, \;\;\;\;  s_{21} = \left [\frac{\gamma}{3}\left (1 + \sqrt{1 - \frac{3}{\gamma^{2}}}\right )\right ]^{2}.
$$
Then $W_{2}$ has no real zeros for $\gamma<\sqrt{3}$, 
one zero $s_{11} = s_{12} = \frac{1}{3}$   for $\gamma = \sqrt{3}$,
two different real zeros for $\gamma > \sqrt{3}$ with $0<s_{21}<s_{22}<1$ for $\sqrt{3}<\gamma<2$.
For $\gamma = 2$ we have $s_{22} = 1$, for $\gamma > 2$, $s_{21}<s_{11}<1<s_{22}$.\\

$I$ and $I^{-1}$ have the following properties according to the following 3 cases.
\begin{description}
\item[Case 1:] $\gamma < 2$. We have $b = \infty$, $I(\infty) = \infty$,
$$
I \in C[0,\infty)\cap C^{2}(0,\infty), \;\;\;\; I'(0) = \infty,
$$
$$
I^{-1}\in C^{1}[0,\infty)\cap C^{2}(0,\infty), \;\;\;\;\; (I^{-1})'(0) = 0.
$$
For $\gamma\le \sqrt{3}$, $I$ is strictly concave and $I^{-1}$ is strictly convex on $[0,\infty)$;

for $\sqrt{3}<\gamma<2$, $I$ is strictly concave on $[0,s_{21}]\cup [s_{22},\infty)$ and strictly convex on 
$[s_{21},s_{22}]$, and $I^{-1}$ is strictly convex on $[0, I(s_{21})]\cup [I(s_{22},\infty)]$ and strictly concave on 
$[I(s_{21}), I(s_{22})]$.

\item[Case 2:] $\gamma = 2$.  We have $W_{1}(s) = (1 - \sqrt{s})^{2}$, hence $b = 1$ and $I(1) = \infty$,
$$
I \in C[0,\infty)\cap C^{2}(0,\infty), \;\;\;\; I'(0) = \infty,
$$
$$
I^{-1}\in C^{1}[0,\infty)\cap C_{2}(0,\infty), \;\;\;\;  (I^{-1})'(0) = 0, \;\;\;\; I^{-1}(\infty) = 1.
$$
$I$ is strictly concave in $[0,s_{21}]$ and strictly convex on $[s_{21},1)$ and  $I^{-1}$ is strictly convex on $[0, I(s_{21})]$ 
and strictly concave on $[I(s_{21}), \infty)$.

\item[Case 3:] $\gamma> 2$. We have $W_{1}(s) = (\sqrt{s_{11}} - \sqrt{s})(\sqrt{s_{12}} - \sqrt{s})$ and hence $b = s_{11}$, $I(s_{11})<\infty$,
$$
I\in C[0,s_{11}]\cap C^{2}(0,s_{11}), \;\;\;\;I'(0) = I'(s_{11}) = \infty,
$$
$$
I^{-1}\in C^{1}[0,I(s_{11})]\cap C^{2}(0,I(s_{11})), \;\;\;\; (I^{-1})'(0) = (I^{-1})'(I(s_{11})) = 0.
$$
$I$ is strictly concave on $[0, s_{21}]$ and strictly convex on $[s_{21}, s_{11}]$, and $I^{-1}$ is strictly convex on $[0, I(s_{21})]$ 
and strictly concave on $[I(s_{21}), I(s_{11})]$.
\end{description}
\end{lem}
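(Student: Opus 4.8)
The plan is to read the whole statement off two elementary objects: the quadratics obtained from $W_1$ and $W_2$ by the substitution $t=\sqrt{s}$, and the standard integrability analysis of a function with an algebraic endpoint singularity. Writing $W_1(s)=t^{2}-\gamma t+1$ and $W_2(s)=t^{2}-\tfrac23\gamma t+\tfrac13$ with $t=\sqrt s$, the quadratic formula produces the listed zeros $s_{11},s_{12}$ and $s_{21},s_{22}$; the discriminants $\gamma^{2}-4$ and $\tfrac43(\gamma^{2}-3)$ give the thresholds $\gamma=2$, $\gamma=\sqrt3$; and completing the square gives the minimum values $1-\gamma^{2}/4$ and $\tfrac13-\gamma^{2}/9$, which are $\ge 0$ precisely at or below those thresholds. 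The $t$-roots of $W_1$ multiply to $1$ and those of $W_2$ to $\tfrac13$, so $0<s_{11}<1<s_{12}$ and $s_{21}s_{22}=\tfrac19$ whenever the roots are real. To interleave the two sets of zeros I will use the identity
$$
W_1(s)-W_2(s)=\tfrac13\bigl(2-\gamma\sqrt{s}\bigr):
$$
evaluating at $s=1$ gives $W_2(1)=\tfrac23(2-\gamma)$, which fixes the position of $1$ among $s_{21},s_{22}$; and for $\gamma>2$, since $\sqrt{s_{11}}\sqrt{s_{12}}=1$ and $\sqrt{s_{12}}>\gamma/2$, one has $\sqrt{s_{11}}<2/\gamma$, hence $W_2(s_{11})<0$, i.e.\ $s_{21}<s_{11}<s_{22}$. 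Together with the sign of the vertex abscissa $\gamma/3<1$ of $W_2$, these computations establish every ordering assertion in the statement.

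Next I differentiate $I$. The formula $I'(z)=z^{-1/4}W_1(z)^{-1/2}$ is the fundamental theorem of calculus, valid wherever $z>0$ and $W_1(z)>0$; in particular $I'>0$, so $I$ is strictly increasing and invertible on its range. Differentiating once more and pulling out $-\tfrac14 z^{-5/4}W_1(z)^{-3/2}$ leaves $W_1(z)+2zW_1'(z)$, and the one-line identity $W_1+2sW_1'=1-2\gamma\sqrt s+3s=3W_2(s)$ yields $I''(z)=-\tfrac34\,W_2(z)\big/\bigl(z^{5/4}W_1(z)^{3/2}\bigr)$ (the denominator in the displayed claim should read $z^{5/4}W_1(z)^{3/2}$). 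Consequently the sign of $I''$ on $(0,b)$ is opposite to the sign of $W_2$.

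Now the endpoint $b$, the value $I(b)$, and regularity. At $s=0$ the integrand is $\sim s^{-1/4}$, which is integrable, so $I(0)=0$, $I$ is continuous up to $0$, and $I'(z)\to\infty$ as $z\to0^{+}$; since $I(z)\sim\tfrac43 z^{3/4}$ near $0$, the inverse satisfies $(I^{-1})'(0)=0$. The right end is governed by the first zero of $W_1$: for $\gamma<2$, $W_1>0$ on all of $[0,\infty)$ and the integrand is $\sim s^{-3/4}$ at infinity, so $b=\infty$ and $I(\infty)=\infty$; for $\gamma=2$, $W_1(s)=(1-\sqrt s)^{2}$ vanishes at $s=1$ and the integrand blows up like $1/(1-s)$ there, which is not integrable, so $b=1$ and $I(1)=\infty$; for $\gamma>2$, $W_1$ has a simple zero at $s_{11}$, so the integrand is $\sim(s_{11}-s)^{-1/2}$, which is integrable, giving $b=s_{11}$ with $I(s_{11})<\infty$ and $I'(z)\to\infty$ as $z\to s_{11}^{-}$. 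On $(0,b)$ the integrand is smooth, hence $I\in C^{2}(0,b)$; the inverse $I^{-1}$ lives on $[0,I(b))$ (closed at $I(b)$ when $\gamma>2$), is $C^{2}$ on the open interval by the inverse-function theorem, and $(I^{-1})'=1/(I'\circ I^{-1})$ extends continuously by the value $0$ to every endpoint at which $I'$ becomes infinite; for $\gamma=2$, surjectivity of $I$ onto $[0,1)$ forces $I^{-1}(\infty)=1$.

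Finally, convexity. Feeding the sign of $W_2$ from the first step into $I''=-\tfrac34 W_2/(z^{5/4}W_1^{3/2})$: for $\gamma\le\sqrt3$, $W_2>0$ on $(0,\infty)$, so $I''<0$ and $I$ is strictly concave; for $\sqrt3<\gamma<2$, $W_2<0$ exactly on $(s_{21},s_{22})$, so $I$ is strictly convex there and strictly concave on $[0,s_{21}]\cup[s_{22},\infty)$; for $\gamma\ge2$, the only zero of $W_2$ interior to the shrunken domain $(0,b)$ (with $b=1$ or $b=s_{11}$) is $s_{21}$, and $W_2<0$ on $(s_{21},b)$, so $I$ is concave on $[0,s_{21}]$ and convex on $[s_{21},b)$. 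Each claim about $I^{-1}$ then follows from the elementary fact that the inverse of an increasing strictly concave (resp.\ convex) function is strictly convex (resp.\ concave), applied on each subinterval with the breakpoints transported to $I(s_{21})$, $I(s_{22})$, $I(s_{11})$. I expect the genuinely delicate part to be the boundary bookkeeping in the third paragraph: establishing precisely that $I$ and $I^{-1}$ have the stated one-sided $C^{1}$/$C^{2}$ regularity and the exact endpoint behaviour $I'(0)=\infty$, $(I^{-1})'(0)=0$, $I^{-1}(\infty)=b$, $(I^{-1})'(I(s_{11}))=0$, which couples the integrability analysis of the endpoint singularity with the behaviour of inverses having vertical tangents; the remaining algebra, once the identity $W_1-W_2=\tfrac13(2-\gamma\sqrt s)$ and the product-of-roots relations are noted, is routine.
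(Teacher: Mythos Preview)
Your proposal is correct and complete. The paper's own proof of this lemma consists of the single sentence ``The proof is straightforward,'' so there is no approach to compare against; your write-up is precisely the kind of elementary verification the authors are leaving to the reader, and you have additionally caught the typo in the displayed formula for $I''$ (the denominator should indeed be $z^{5/4}W_1(z)^{3/2}$, not $z^{5/4}\sqrt[3]{W_1(z)}$).
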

\begin{proof} 
The proof is straightforward.
\end{proof}
The picture is qualitatively the following:

\begin{center}
\begin{tikzpicture}
\draw[->] (0, 0) -- (7.5, 0);
\draw[->] (0, -0.5) -- (0, 5
);
\draw (0.1, 0) .. controls (1, 1.4) and (4.5, 1.6) .. (7.5, 1.9);
\node (a) at (1.63, 1.39) {$\bullet$};
\node (a) at (1.52, 1.8) {$\bullet$};
\node (a) at (1.2, 2.2) {$\bullet$};
\node (a) at (1.9, 3.9) {$\odot$};
\node (a) at (1.2, 3.3) {$\odot$};
\node (a) at (2.52, 4.42) {$\odot$};
\node (a) at (3.24, 5.62) {$\odot$};
\node (a) at (3.4, 3.18) {$\bullet$};
\node (a) at (1, 2.4) {$\bullet$};
\node (a) at (0.72, 2.6) {$\bullet$};
\node (a) at (0.42, 2.55) {$\bullet$};
\draw[dashed] (0.7, 0) -- (0.7, 2.6);
\draw[dashed] (1.9, 0) -- (1.9, 3.9) -- (0, 3.9);
\draw[dashed] (3.4, 0) -- (3.4, 3.1);
\draw[dashed] (3.8, 0) -- (3.8, 5.4);
\draw (0.07, 0) .. controls (0.5, 0.8) and (1, 1.1) .. (1.6, 1.4);
\draw (1.65, 1.42) .. controls (2.5, 1.8) and (4.5, 2.3) .. (7.5, 2.5);
\draw (0.04, 0) .. controls (0.5, 1.1) and (1, 1.5) .. (1.43, 1.78);
\draw (1.59, 1.87) .. controls (2, 2.2) and (4.5, 2.8) .. (7.5, 3.2);
\draw (0.02, 0) .. controls (0.2, 1.1) and (0.7, 1.8) .. (1.2, 2.22);
\draw (1.22, 2.22) .. controls (2, 2.5) and (2.7, 2.6) .. (3.4, 3.2);
\draw (3.4, 3.2) .. controls (4, 4) and (6, 5) .. (6.9, 5.2);
\draw (0.01, 0) .. controls (0.1, 1.4) and (0.8, 2.3) .. (1, 2.4);
\draw (1.06, 2.44) .. controls (2, 2.7) and (3.6, 4) .. (3.8, 5.7);
\draw (0, 0) .. controls (0.1, 1.4) and (0.5, 2.3) .. (0.7, 2.6);
\draw (0.7, 2.62) .. controls (1, 2.7) and (1.8, 3.4) .. (1.9, 3.8);
\draw[dashed] (2, 3.95) .. controls (2.2, 4) and (3.2, 5) .. (3.4, 6.2);
\draw (0, 0) .. controls (0.1, 1.6) and (0.3, 2.3) .. (0.4, 2.6);
\draw (0.4, 2.6) .. controls (0.8, 2.8) and (1, 3) .. (1.2, 3.2);
\draw[dashed] (1.28, 3.25) .. controls (1.4, 3.3) and (1.6, 3.6) .. (1.8, 3.8);
\node (a) at (-0.6, 3.9) {$I(S_{11})$};
\node (a) at (0, 5.2) {$I$};
\node (a) at (1.3, 4.4) {$\underset{\overbrace{\hspace{1.2cm}}}{r > 2}$};
\node (a) at (4.3, 5.8) {$\gamma = 2$};
\node (a) at (8.2, 2.6) {$\gamma = \sqrt{3}$};
\node (a) at (8.2, 2) {$\gamma < \sqrt{3}$};
\node (a) at (8.4, 4.2) {$\left.\begin{matrix}
	\\
	\\
	\\
	\\
	\end{matrix} \right\} \gamma \in (\sqrt{3},2)$};
\node (a) at (0.7, -0.3) {$S_{21}$};
\node (a) at (1.9, -0.3) {$S_{11}$};
\node (a) at (3.4, -0.3) {$S_{22}$};
\node (a) at (3.8, -0.3) {$1$};
\node (a) at (7.8, 0) {$S$};
\end{tikzpicture}

Turning points: $\odot$ Points with $I'=\infty$  $\;\bullet$ Points with $I''=0$.
\end{center}
Let
$$
I_{\gamma}(z) =: \int\limits_{0}^{z} \frac{ds}{s^{1/4}\sqrt{1 - \gamma\sqrt{s} + s}}.
$$
Then
$$
\frac{\partial}{\partial\gamma}I_{\gamma}(z) = \int\limits_{0}^{z}\frac{-\frac{1}{2}(-\sqrt{s})}{s^{1/4}\sqrt[3]{1 - \gamma\sqrt{s} + s}}ds > 0.
$$
$$
s_{11,2}(\gamma) =\left [\frac{\gamma}{2} \pm \sqrt{\frac{\gamma^{2}}{4} - 1}\right ]^{2}, \;\;\;\;    s_{21,2}(\gamma) = \left [\frac{\gamma}{3} \pm \sqrt{\frac{\gamma^{2}}{9} - \frac{1}{3}}\right ]^{2}.
$$
\begin{center}
\begin{tikzpicture}
\draw (5.7, 0.2) .. controls (2.3, 0.7) and (2.5, 1.4) .. (4.2, 4);
\draw[dashed] (5.85, 0.1) .. controls (0.9, -0.1) and (2.22, 0.8) .. (3, 1.5);
\draw[dashed] (3, 1.5) .. controls (3.5, 2) and (4.4, 2) .. (5, 3.4);
\draw[->] (0, 0) -- (6, 0);
\draw[->] (0, 0) -- (0, 4);
\draw (0, 0.5) -- (5.7, 0.5);
\draw (0, 1.5) -- (5.7, 1.5);
\draw (1.5, 0) -- (1.5, 3);
\draw (2.25, 0) -- (2.25, 0.5);
\draw (3, 0) -- (3, 3);
\draw (4.5, 0) -- (4.5, 3);
\node (a) at (1.5, -0.3) {$1$};
\node (a) at (3, -0.3) {$2$};
\node (a) at (-0.4, 0.5) {$1/3$};
\node (a) at (-0.3, 1.5) {$1$};
\node (a) at (2.25, -0.3) {$\sqrt{3}$};
\node (a) at (4.5, -0.3) {$3$};
\node (a) at (6.3, -0.1) {$\gamma$};
\node (a) at (6.3, 0.25) {$S_{21} (\gamma)$};
\node (a) at (5.2, 0.7) {$S_{11} (\gamma)$};
\node (a) at (5.5, 3.4) {$S_{22} (\gamma)$};
\node (a) at (4.8, 4) {$S_{12} (\gamma)$};
\end{tikzpicture}

Asymptotics: $S_{12}(\gamma)\sim \gamma^2$, $S_{22}(\gamma)\sim\frac{4}{9}\gamma^2$, for large $\gamma$.
\end{center}

\begin{rem}\label{rem1}  Let $f: [0,\varepsilon)\rightarrow [0, f(\varepsilon))$ be strictly increasing with $f(\varepsilon) = lim f(x)$, 
so that $f^{-1}: [0, f(\varepsilon))\rightarrow [0,\varepsilon)$ exists.Let $c>0$. Then $\frac{1}{c}f: [0,\varepsilon)\rightarrow [0,\frac{1}{c}f(\varepsilon))$ is
invertible with $(\frac{1}{c}f)^{-1} = f^{-1}(c): [0,\frac{1}{c}f(\varepsilon))\rightarrow [0,\varepsilon)$. If $f^{-1}$ is convex (concave) on some 
subinterval $[\alpha, \beta]$, then $(\frac{1}{c}f)^{-1}$ is convex (concave) on $[\frac{\alpha}{c}, \frac{\beta}{c}]$.
\end{rem}
\begin{prop}\label{prop2} For every $\gamma\in R$ there exists a unique solution on $[0,\infty)$ of the initial value problem
\begin{align}\label{prop3.3eq1}
D'' =& j_{x}\left(6\sqrt{D} + \frac{2}{\sqrt{D}} - 4\gamma\right) = \frac{6 j_{x}}{\sqrt{\theta}} W_{2}(D), \nonumber\\ 
D(0) =& D'(0) = 0, \quad       W_{2}(s) = \frac{1}{3} - \frac{2}{3}\gamma\sqrt{s} + s.
\end{align}
In particular, if $b$ is the quantity defined in Lemma \ref{lem1}, then 
\begin{equation}\label{prop3.3eq2}
D =: \left(\frac{1}{8j_{x}}I\right)^{-1} = I^{-1}(\sqrt{8j_{x}}): \left[0, \frac{1}{\sqrt{8j_{x}}} I(b)\right) \longrightarrow [0,b)
\end{equation}
is strictly increasing and has the properties i), ii), iii) of Proposition \ref{prop1} for $a =: \frac{1}{\sqrt{8j_{x}}} I(b)$, and 
is a solution of 
$$
D'^{2} = 8 j_{x}(\sqrt[3]{D} + \sqrt{D} - \gamma D) = 8 j_{x} \sqrt{D}W_{1}(D),
$$
$$
D(0) = 0, \;\;\;\;\; W_{1}(s) = 1 - \gamma\sqrt{s} + s.
$$
Moreover, for varying $\gamma, D$ has the following properties:
\begin{description}
\item[Case 1:] $\gamma< 2$.   $b = \infty$, $D\in C^{1}[0,\infty)\cap C^{2}(0,\infty)$, $D(\infty) = \infty$, $D'(0) = 0$.

For $\gamma\le \sqrt{3}$, $D$ is strictly convex on $[0,\infty)$; for $\sqrt{3}<\gamma <2$, $D$ is strictly convex on 
$[0,\frac{1}{\sqrt{8j_{x}}} I(s_{21})] \cup [\frac{1}{\sqrt{8j_{x}}} I(s_{22}), \infty)$ and strictly concave on 
$[\frac{1}{\sqrt{8j_{x}}} I(s_{21}), \frac{1}{\sqrt{8j_{x}}} I(s_{22})]$.

\item[Case 2:] $\gamma=2$, $b = 1$, $D \in C^{1}[0,\infty)\cap C^{2}(0,\infty)$, $D'(0) = 0$, $D(\infty) = 1$, $D$ is strictly convex on $[0, \frac{1}{\sqrt{8j_{x}}} I(s_{21})]$
and strictly concave on $[\frac{1}{\sqrt{8j_{x}}} I(s_{21}), \infty]$. 

\item[Case 3:] $\gamma >2$. $b = s_{11}$, $a =: \frac{1}{\sqrt{8j_{x}}} I(b) < \infty$, $D\in C^{1}[0, a]\cap C^{2}(0,a]$, $D'(0) = D'(a) = 0$, $D$ is strictly convex on 
$[0, \frac{1}{\sqrt{8j_{x}}} I(s_{21})]$ and strictly concave on $[\frac{1}{\sqrt{8j_{x}}} I(s_{21}), a]$. $D$ can be extended to a solution of \eqref{prop3.3eq1} on $(0,\infty)\backslash 2a \mathbb{N}$ to 
a function $D\in C^{1}[0,\infty)\cap C^{2}((0,\infty)\backslash 2 a \mathbb{N})$. First by extending $D$ from $[0,a]$ to $[a,2a]$ by the definition 
\begin{equation}\label{prop3.3eq3}
D(x) =: D(2a - x), \;\;\;\;  x\in [a,2a] 
\end{equation}
and then by extending $D$ from $[0,2a]$ to $[2a,\infty)$ periodically.
\end{description}
\end{prop}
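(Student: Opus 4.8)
The plan is to reduce the second-order ODE \eqref{prop3.3eq1} to the first-order equation for $D$ via a first-integral, and then recover the behavior of $D$ from the detailed analysis of $I$ already carried out in Lemma \ref{lem1}, using the scaling Remark \ref{rem1}. First I would note that multiplying \eqref{prop3.3eq1} by $D'$ and integrating gives a conserved quantity: since $D(0)=D'(0)=0$ and the antiderivative of $6\sqrt{s}+2/\sqrt{s}-4\gamma$ is $4s^{3/2}+4\sqrt{s}-4\gamma s = 4\sqrt{s}\,W_1(s)$, one obtains $(D')^2 = 8j_x\bigl(D^{3/2}+D^{1/2}-\gamma D\bigr) = 8j_x\sqrt{D}\,W_1(D)$, exactly the stated first-order equation. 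Because $W_1(s)\ge 0$ for $s$ in the relevant range $[0,b)$ (this is precisely why $b$ was defined in Lemma \ref{lem1} as the maximal interval of definition of $I$), the right-hand side is nonnegative there, so $D'=\sqrt{8j_x}\,D^{1/4}\sqrt{W_1(D)}\ge 0$ and $D$ is nondecreasing; separating variables yields $\displaystyle\int_0^{D(x)}\frac{ds}{s^{1/4}\sqrt{W_1(s)}} = \sqrt{8j_x}\,x$, i.e. $I(D(x)) = \sqrt{8j_x}\,x$, which is \eqref{prop3.3eq2} after inverting $I$ (legitimate since $I$ is strictly increasing by Lemma \ref{lem1}). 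The scaling statement $D = (\tfrac{1}{\sqrt{8j_x}}I)^{-1}$ and the description of its domain $[0,\tfrac{1}{\sqrt{8j_x}}I(b))$ then follow from Remark \ref{rem1} with $c=\sqrt{8j_x}$.

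Next I would verify existence and uniqueness. Existence: the function $D$ defined by \eqref{prop3.3eq2} is $C^1$ up to $0$ with $D'(0)=0$ (this is the content of the $(I^{-1})'(0)=0$ statements in Lemma \ref{lem1}, cases 1--3) and $C^2$ on the open interval, and a direct differentiation of $I(D(x))=\sqrt{8j_x}\,x$ twice recovers \eqref{prop3.3eq1}; the properties i), ii), iii) of Proposition \ref{prop1} for $\theta=D$ are immediate from $D(0)=D'(0)=0$, $D>0$ on the open interval, and the first-order relation. Uniqueness is the delicate point, because the right-hand side of \eqref{prop3.3eq1} is singular ($2/\sqrt{D}\to\infty$) at $D=0$, so standard Picard–Lindelöf does not apply at the initial point. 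I would argue as follows: any solution with $D(0)=D'(0)=0$ must, near $0$, satisfy $D''>0$ (the $2/\sqrt{D}$ term dominates), hence $D$ is strictly convex and strictly positive for small $x>0$; on any interval where $D>0$ the equation is locally Lipschitz, so the solution is unique there; and the first-integral computation above shows every such solution satisfies $(D')^2=8j_x\sqrt{D}\,W_1(D)$ with $D'\ge0$, which pins down $D$ uniquely as $I^{-1}(\sqrt{8j_x}\,x)$ as long as $D<b$. The only thing to rule out is $D$ reaching the value $b$ (where $W_1(b)=0$, case $\gamma>2$) in finite "time" and then leaving the regime; but at $s=b=s_{11}$ one has $W_1$ vanishing to first order in $\sqrt{s}$, so $I(b)<\infty$ while $(I^{-1})'(I(b))=0$, i.e. $D$ reaches $b$ with zero derivative, and by symmetry of the equation under $x\mapsto 2a-x$ it must then decrease — this is exactly the extension \eqref{prop3.3eq3}.

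Finally, for the case-by-case convexity and regularity assertions I would simply translate the corresponding statements of Lemma \ref{lem1} about $I$ and $I^{-1}$ through the scaling in Remark \ref{rem1}: concavity/convexity of $I^{-1}$ on $[I(s_{21}),\dots]$ etc. becomes the stated convexity/concavity of $D$ on $[\tfrac{1}{\sqrt{8j_x}}I(s_{21}),\dots]$, the endpoints $b=\infty$ (cases 1, 2) versus $b=s_{11}$ (case 3) and $D(\infty)=\infty,1$ or $D(a)=s_{11}$ come directly from the values $I(b)$ and $I^{-1}(\infty)$ computed in Lemma \ref{lem1}, and the $C^2$ failure at the points $2a\mathbb N$ in case 3 is because $D'$ vanishes there while $D''$ has a one-sided infinite limit (the $2/\sqrt{D}$ term is finite but $D'$ changes character). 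The main obstacle is the uniqueness argument at the singular initial point $D=0$: one has to show that no "spurious" solution can stay at $0$ for a while and then take off, which I would handle by the convexity/positivity observation above (once $D>0$ the flow is deterministic, and $D''\ge 2j_x/\sqrt{D}>0$ forces immediate departure from $0$ with the growth rate dictated by the first integral), making the solution genuinely unique.
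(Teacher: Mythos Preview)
Your proposal is correct and follows essentially the same route as the paper: multiply the second-order equation by $D'$, integrate using $D(0)=D'(0)=0$ to obtain the first integral $(D')^{2}=8j_{x}\sqrt{D}\,W_{1}(D)$, separate variables to get $I(D(x))=\sqrt{8j_{x}}\,x$, and then read off all the regularity and convexity statements from Lemma~\ref{lem1} via the scaling in Remark~\ref{rem1}, with the reflection \eqref{prop3.3eq3} handling the periodic extension in Case~3. Your treatment of uniqueness at the singular initial point is in fact more careful than the paper's (which essentially asserts it); one small slip: at the points $2a\mathbb{N}$ in Case~3 one has $D=0$, so the $2/\sqrt{D}$ term is \emph{infinite}, not finite---that is precisely why $D''$ blows up and $C^{2}$ regularity fails there.
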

\begin{proof} Let us assume that $D$ has properties i) - iii) of Proposition \ref{prop1} on some $[0, \tilde{a})$. If we multiply \eqref{prop3.1eq2} by $2D'(x)$, we obtain
\begin{align*}
(D'(x)^{2})' =& 2 D'(x)D''(x)\\
=&j_{x} \left (12 \sqrt{D(x)} D'(x) + \frac{4}{\sqrt{D(x)}} D'(x) - 8\gamma D'(x)\right )\\		
=& j_{x} (8\sqrt[3]{D(x)} + 8\sqrt{D(x)} - 8\gamma D(x))'.
\end{align*}
Integration from $0$ to $x$ gives, with $D(0) = D'(0) = 0$,
\begin{align*}
D'(x)^{2} =& 8j_{x} (\sqrt[3]{D(x)} + \sqrt{D(x)} - D(x))\\
=& 8j_{x}\sqrt{D(x)} W_{1}(D(x)).
\end{align*}
For small $\tilde{a}>0$, $W_{1}(D(x))>0$ and
$$
\frac{1}{\sqrt{8j_{x}}} \frac{D'(x)}{D(x)^{1/4}\sqrt{W_{1}(D(x))}} = 1.
$$
Integration from $0$ to $x$ gives
$$
\frac{1}{\sqrt{8j_{x}}}\int\limits_{0}^{D(x)} \frac{ds}{s^{1/4} \sqrt{W_{1}(s)}} = x,
$$
that is, $\frac{1}{\sqrt{8j_{x}}} I(D(x)) = x$ or \eqref{prop3.3eq2}. Hence, if we define $D$ by \eqref{prop3.3eq2}, $D$ is the desired function with the properties i) - iii) of 
Proposition \ref{prop1}. The Cases 1-3 follow from Lemma \ref{lem1} in condition with the Remark \ref{rem1}. The extension of $D$ on $[0,a]$ to the interval 
$[a, 2a]$ by the definition \eqref{prop3.3eq3} yields a solution of \eqref{prop3.3eq1} on $[a,2a]$ because for $x\in [a, 2a]$
\begin{align*}
D''(x) = D''(2a - x) =& j_{x}\left(6\sqrt{\theta(2a - x)} + \frac{2}{\sqrt{\theta(2a - x)}} - 4\gamma \right)\\
=& j_{x}\left(6\theta(x) + \frac{2}{\sqrt{\theta(x)}} - 4\gamma\right).
\end{align*}
\end{proof}
\begin{lem}\label{lem2} Let $D$ be the solution of Proposition \ref{prop2} then we get asymptotic
$$
D(x) = \left(\frac{3}{\sqrt{2}}\right)^{4/3} j_{x}^{2/3}x^{4/3} (1 + O(x^{2/3})).  \qquad    x\rightarrow 0,
$$
In particular, $\frac{1}{\sqrt{D}}$ is integrable at $0$.
\end{lem}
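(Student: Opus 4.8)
The plan is to extract the small-$x$ behaviour of $D$ directly from the implicit relation $\frac{1}{\sqrt{8j_x}}\,I(D(x)) = x$ established in Proposition \ref{prop2}, by first expanding $I$ near the origin and then inverting the expansion. Since $D$ is continuous with $D(0)=0$, we have $D(x)\to 0$ as $x\to 0^+$, so it suffices to understand $I(z)$ as $z\to 0^+$. Because $W_1(s) = 1 - \gamma\sqrt{s} + s \to 1$ and, by Lemma \ref{lem1}, $W_1>0$ on a right neighbourhood of $0$ in all three cases $\gamma<2$, $\gamma=2$, $\gamma>2$, a binomial expansion gives $W_1(s)^{-1/2} = 1 + \tfrac{\gamma}{2}\sqrt{s} + O(s)$ uniformly on $[0,\delta]$ for small $\delta$, hence
$$
I(z) = \int_0^z \frac{ds}{s^{1/4}\sqrt{W_1(s)}} = \int_0^z s^{-1/4}\bigl(1 + O(\sqrt{s})\bigr)\,ds = \frac{4}{3}z^{3/4}\bigl(1 + O(\sqrt{z})\bigr).
$$

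Substituting into $I(D(x)) = \sqrt{8j_x}\,x$ yields $\tfrac{4}{3}D(x)^{3/4}\bigl(1+O(\sqrt{D(x)})\bigr) = \sqrt{8j_x}\,x$, i.e.
$$
D(x)^{3/4} = \frac{3\sqrt{8j_x}}{4}\,x\,\bigl(1 + O(\sqrt{D(x)})\bigr) = \frac{3}{\sqrt{2}}\sqrt{j_x}\,x\,\bigl(1+O(\sqrt{D(x)})\bigr).
$$
To leading order this already gives $D(x)^{3/4}\sim \tfrac{3}{\sqrt 2}\sqrt{j_x}\,x$, hence $D(x)\sim (\tfrac{3}{\sqrt2})^{4/3}j_x^{2/3}x^{4/3}$; in particular $\sqrt{D(x)} = O(x^{2/3})$. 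Feeding this bound on the error term back into the displayed identity (a one-step bootstrap) turns $O(\sqrt{D(x)})$ into $O(x^{2/3})$, so $D(x)^{3/4} = \tfrac{3}{\sqrt2}\sqrt{j_x}\,x\,(1+O(x^{2/3}))$, and raising to the power $4/3$ gives exactly
$$
D(x) = \left(\frac{3}{\sqrt{2}}\right)^{4/3} j_x^{2/3}\,x^{4/3}\,\bigl(1 + O(x^{2/3})\bigr), \qquad x\to 0.
$$
The integrability claim then follows at once: $D(x)^{-1/2} = (\tfrac{3}{\sqrt2})^{-2/3}j_x^{-1/3}x^{-2/3}(1+O(x^{2/3}))$ is bounded by a constant multiple of $x^{-2/3}$ near $0$, and $\int_0 x^{-2/3}\,dx<\infty$ since $2/3<1$.

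The only delicate point is the inversion/bootstrap step: one must verify that the error $O(\sqrt{D(x)})$ coming from the expansion of $I$ is genuinely controlled by a power of $x$ after inversion, not merely $o(1)$. This is exactly what the two-pass argument above achieves — a crude first pass gives $D(x)=O(x^{4/3})$, hence $\sqrt{D(x)}=O(x^{2/3})$, and a second pass with this estimate inserted produces the sharp exponent $O(x^{2/3})$ in the final asymptotics. Everything else (the binomial expansion of $W_1^{-1/2}$, termwise integration, the algebra of the constant $\tfrac{3}{4}\sqrt{8j_x}=\tfrac{3}{\sqrt 2}\sqrt{j_x}$) is routine.
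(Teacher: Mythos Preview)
Your proof is correct and follows essentially the same route as the paper's own argument: expand $I(z)$ near $0$ via the binomial series for $W_1^{-1/2}$, substitute into the implicit relation $I(D(x))=\sqrt{8j_x}\,x$, then bootstrap once to convert $O(\sqrt{D(x)})$ into $O(x^{2/3})$ before raising to the $4/3$ power. The paper carries one more explicit term in the expansion of $I(z)$, but this is not needed for the stated $O(x^{2/3})$ remainder, so your streamlined version is equivalent.
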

\begin{proof} Using 
$$
(1 - y)^{-1/2} = 1 + \frac{1}{2}y + \frac{3}{8}y^{2} + O(y^{3})\;\;\;\;{\rm for}\;\;\;\; y\rightarrow 0
$$
together with the definition of $I$, we have for $z\rightarrow 0$
\begin{align*}
I(z) =& \int\limits_{0}^{z} \frac{ds}{s^{1/4}\sqrt{1 - \gamma s^{1/2} + s}} = \int\limits_{0}^{z} s^{-1/4} \frac{ds}{\sqrt{1 - s^{1/2}(\gamma - s^{1/2})}}\\
=&\int\limits_{0}^{z}s^{-1/4}\left [1+\frac{1}{2}s^{1/2}(\gamma - s^{1/2}) + \frac{3}{8}s(\gamma^{2} - 2\gamma s^{1/2} + s) + O(s^{3/2})\right ]ds\\
= &\int\limits_{0}^{z}\left [s^{-1/4} + \frac{1}{2}\gamma s^{1/4} + (-\frac{1}{2} + \frac{3}{8} \gamma^{2})s^{3/4} + O(s^{5/4})\right ]ds\\
=& \frac{4}{3}z^{3/4} + \frac{2}{5}z^{5/4} + \left(-\frac{2}{7} + \frac{3}{14}\gamma^{2}\right)z^{7/4}  + O(z^{9/4})\\
=&\frac{4}{3}z^{3/4}\left(1 + \frac{3}{10}z^{1/2} + O(z)\right) ,\\
\frac{1}{\sqrt{8j_{x}}} I(z) =& \frac{1}{\sqrt{8j_{x}}}\cdot \frac{4}{3} z^{3/4}\left (1 + \frac{3}{10} z^{1/2} + O(z)\right ),\\
x =& \frac{1}{\sqrt{8j_{x}}} \frac{4}{3} D(x)^{3/4}\left(1 + \frac{3}{10}D(x)^{1/2} + O(D(x)\right),\\
D(x)^{3/4} =& \frac{3}{\sqrt{2}} \sqrt{j_{x}} x (1 + O(D(x)^{1/2}),  \;\;\;\;\;  D(x) = O(x^{4/3})\\
D(x)^{3/4} =& \frac{3}{\sqrt{2}} \sqrt{j_{x}} x (1 + O(x^{2/3})),\\
D(x) =& \left(\frac{3}{\sqrt{2}}\sqrt{j_{x}}\right)^{4/3}x^{4/3}\left(1+O(x^{2/3})\right)^{4/3}\\ =& \left(\frac{3}{\sqrt{2}} \sqrt{j_{x}}\right)^{4/3} x^{4/3} (1 + O(x^{2/3})).
\end{align*}
\end{proof}

The last asymptotics raises an interesting question: Is the critical value of
the current $j_{x}$ in the case $a = 0$ (zero magnetic field) is compared to the Child-Langmuir law which relates the current to the electrostatic potential $\varphi$ at the anode by the 3/2 power law?

The fundamental Child-Langmuir limit on the maximum current density
in a vacuum between two in infinite parallel electrodes is one of the most well
known and often applied rules of plasma physics. Child and Langmuir \cite{Langmuir1931}
first derived classical space-charge-limited emission for two such electrodes
in a vacuum separated by a distance $D$ and a potential difference $V$. We consider a second-order nonlinear differential equation for the potential:
\begin{equation}
    \varphi^{''} = = \frac{j_{CL}}{\epsilon_{0}}\sqrt{\frac{m}{2e}}\sqrt{\frac{1}{\varphi(z)}},
\end{equation}
where $m$ and $e$ are the electron mass and charge, respectively, $\varphi(z)$ is the potential field in the gap. A first integration of Eq. (27) can be performed after first multiplying both sides by $d\varphi/dz$:
\begin{equation}
\left (\frac{d\varphi}{dz}\right)^2= \frac{4j_{CL}}{\epsilon_{0}}\sqrt{\frac{2e}{m}}\sqrt{\varphi(z)} + K= \frac{4j_{CL}}{\epsilon_{0}}\sqrt{\frac{2e}{m}}\sqrt{\varphi(z)} + \left (\frac{d\varphi}{dz}|_{z=0}\right )^2
\end{equation}
where the constant of integration $K$ is found by applying the boundary condition
of the potential at the cathode, $\varphi(0)=0$. We integrate Eq. (28) to find the potential in this space-charge-limited gap:
\begin{equation}
\varphi(z)= \left (\frac{3}{2}\right )^{4/3}\left (\frac{j_{CL}}{\epsilon_{0}}\right )^{2/3}\left(\frac{m}{2e}\right )^{1/3}z^{4/3} + K.
 \end{equation}
If we apply the boundary condition of the fixed potential anode, $\varphi(D)=V$, we can solve Eq. (32) for the space-charge-limited current density:
\begin{equation}
    j_{CL}=\frac{4}{9}\epsilon_{0}\sqrt{\frac{2e}{m}}\frac{V^{3/2}}{D^{2}}.
\end{equation}
Eq. (30) is the classical Child-Langmuir 3/2 power law in electrostatic case,
when magnetic field is zero.
\begin{prop}\label{prop3}{\rm [15]} Let $0< c\leq j_{x}\leq j_{x}^{max}$, $a=0$. Then equation 
$$
\varphi^{''}= j_{x}\frac{1+\varphi}{\sqrt{\varphi(2+\varphi)}}, \;\; \varphi(0)=0, \; \varphi(1) = \varphi_{L}
$$
has a lower positive solution 
\begin{equation}
    u_{0}=\delta^{2}x^{4/3}
\end{equation}
if 
\begin{equation}
    4\delta^{3}\geq 9 j_{x}^{max}(1+\delta^{2}) / \sqrt{2+ \delta^{2}}
\end{equation}
and an upper positive solution 
\begin{equation}
    u^{0}= \alpha + \beta x \;\;\; (\alpha, \beta >0)
\end{equation}
with 
\begin{equation}
\varphi_{L} \geq \delta^{2}
\end{equation}
where $\delta$ is defined from (32).
\end{prop}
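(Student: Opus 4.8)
The plan is to read Proposition~\ref{prop3} as the assertion that $u_{0}=\delta^{2}x^{4/3}$ and $u^{0}=\alpha+\beta x$ form an \emph{ordered pair of lower and upper solutions} for the singular Dirichlet problem $\varphi''=f(x,\varphi)$, with $f(x,\varphi)=j_{x}(1+\varphi)/\sqrt{\varphi(2+\varphi)}$, $\varphi(0)=0$, $\varphi(1)=\varphi_{L}$, and then to run the method of sub- and supersolutions (monotone iteration) to produce a genuine solution $\varphi$ trapped between them; such a $\varphi$ is automatically positive on $(0,1]$. The $x^{4/3}$ ansatz for the lower barrier is natural in view of Lemma~\ref{lem2}, which shows that the companion initial-value solution behaves like $x^{4/3}$ near the cathode, so the right power-type barrier is dictated by the singular coefficient.

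First I would check that $u_{0}$ is a lower solution. Write $f(x,\varphi)=j_{x}h(\varphi)/\sqrt{\varphi}$ with $h(t)=(1+t)/\sqrt{2+t}$; since $h'(t)=(3+t)/(2(2+t)^{3/2})>0$, $h$ is increasing. With $u_{0}''=\tfrac{4}{9}\delta^{2}x^{-2/3}$ the inequality $u_{0}''\ge f(x,u_{0})$ on $(0,1)$ becomes, after multiplying by $x^{2/3}$, $\tfrac{4}{9}\delta^{3}\ge j_{x}h(\delta^{2}x^{4/3})$; because $h$ is increasing and $\delta^{2}x^{4/3}\le\delta^{2}$ on $[0,1]$, it suffices that $\tfrac{4}{9}\delta^{3}\ge j_{x}h(\delta^{2})$, hence (using $j_{x}\le j_{x}^{\max}$) the hypothesis $4\delta^{3}\ge 9 j_{x}^{\max}(1+\delta^{2})/\sqrt{2+\delta^{2}}$; together with $u_{0}(0)=0$ and $u_{0}(1)=\delta^{2}\le\varphi_{L}$ this makes $u_{0}$ a lower solution. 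For $u^{0}=\alpha+\beta x$ with, say, $\alpha=\beta=\varphi_{L}>0$, one has $(u^{0})''=0\le f(x,u^{0})$ trivially, since the right-hand side is positive whenever $\alpha+\beta x>0$; and $u^{0}(0)=\varphi_{L}\ge 0$, $u^{0}(1)=2\varphi_{L}\ge\varphi_{L}$, so $u^{0}$ is an upper solution. Finally $u_{0}\le u^{0}$ on $[0,1]$ because $\delta^{2}x^{4/3}\le\delta^{2}\le\varphi_{L}\le u^{0}(x)$, which is where the assumption $\varphi_{L}\ge\delta^{2}$ enters.

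The remaining point, and the only real difficulty, is to put a solution between the barriers: $f(x,\cdot)$ is continuous and \emph{strictly decreasing} on $\{\varphi>0\}$ but \emph{singular at $\varphi=0$}, which is precisely the cathode datum, so the classical non-singular sub/supersolution theorem does not apply verbatim. I would use the monotone iteration: let $u_{n+1}$ solve the \emph{linear} problem $u_{n+1}''=f(x,u_{n})$, $u_{n+1}(0)=0$, $u_{n+1}(1)=\varphi_{L}$, starting from $u_{0}$. Using that a convex (resp. concave) function on $[0,1]$ lies below (resp. above) the chord joining its endpoint values, one gets $u_{0}\le u_{1}$ from the subsolution inequality, $u_{n}\le u_{n+1}$ for $n\ge1$ from $f(x,u_{n-1})\ge f(x,u_{n})$ (monotonicity of $f$ plus the induction hypothesis, with vanishing endpoint differences), and $u_{n+1}\le u^{0}$ in one stroke since $(u^{0}-u_{n+1})''=-f(x,u_{n})\le 0$ has nonnegative endpoint values. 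What tames the singularity is the a priori bound $0<f(x,u_{n}(x))\le f(x,u_{0}(x))\le u_{0}''(x)=\tfrac{4}{9}\delta^{2}x^{-2/3}\in L^{1}(0,1)$, valid because $u_{n}\ge u_{0}$.

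To conclude, write the iteration in Green-function form $u_{n+1}(x)=\varphi_{L}x-\int_{0}^{1}G(x,s)f(s,u_{n}(s))\,ds$ with the bounded Dirichlet kernel $G$ of $-d^{2}/dx^{2}$ on $[0,1]$, and pass to the limit by monotone/dominated convergence: the pointwise limit $\varphi$, with $u_{0}\le\varphi\le u^{0}$, satisfies $\varphi(x)=\varphi_{L}x-\int_{0}^{1}G(x,s)f(s,\varphi(s))\,ds$. Integrability of $s\mapsto f(s,\varphi(s))$ (same $L^{1}$ bound) gives $\varphi\in C^{1}[0,1]$; on $(0,1)$, where $s\mapsto f(s,\varphi(s))$ is continuous because $\varphi\ge u_{0}>0$ there, $\varphi\in C^{2}$ and $\varphi''=f(x,\varphi)$, with $\varphi(0)=0$, $\varphi(1)=\varphi_{L}$, and $\varphi\ge u_{0}>0$ on $(0,1]$ — a positive solution trapped between the two barriers. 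An equivalent route, if one prefers to avoid the Green-function limit, is to solve the regularized problems on $[\epsilon,1]$ with datum $\varphi(\epsilon)=u_{0}(\epsilon)$, keep them between $u_{0}$ and $u^{0}$ with $\varphi''$ dominated by the same $L^{1}$ function, and extract a $C^{1}$-convergent subsequence as $\epsilon\to 0$. I expect the verification of the differential inequality for $u_{0}$ to be purely computational, and the control of the $\varphi=0$ singularity in the limit passage to be the crux.
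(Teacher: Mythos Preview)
The paper does not prove this proposition; it is quoted from reference~[15] (Sinitsyn~2001) and used without proof in the discussion of the Child--Langmuir law that follows. So there is no ``paper's proof'' to compare against.

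Your argument is correct and in fact supplies more than what is asserted. Read literally, the proposition only claims that $u_{0}=\delta^{2}x^{4/3}$ and $u^{0}=\alpha+\beta x$ are, respectively, a lower and an upper solution of the singular Dirichlet problem. Your first two paragraphs verify exactly this: the computation $u_{0}''=\tfrac{4}{9}\delta^{2}x^{-2/3}$ together with the monotonicity of $h(t)=(1+t)/\sqrt{2+t}$ reduces the subsolution inequality $u_{0}''\ge f(x,u_{0})$ on $(0,1)$ to its worst case $x=1$, yielding precisely condition~(32); the supersolution inequality for the affine $u^{0}$ is trivial since $f>0$; and the ordering $u_{0}\le u^{0}$ and the boundary inequality $u_{0}(1)=\delta^{2}\le\varphi_{L}$ are where~(34) enters. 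That already completes the proof of the proposition as stated.

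Your last two paragraphs go further and actually construct a $C^{1}[0,1]\cap C^{2}(0,1)$ solution trapped between the barriers via monotone iteration, using the crucial $L^{1}$ domination $0<f(x,u_{n})\le f(x,u_{0})\le u_{0}''=\tfrac{4}{9}\delta^{2}x^{-2/3}$ to control the cathode singularity. This extra step is sound and is presumably the real payoff that~[15] is aiming at, but it exceeds what the proposition itself asserts. One minor remark: the proposition leaves $\alpha,\beta>0$ free, so for the ordered-pair property one needs in addition $\alpha+\beta\ge\varphi_{L}$; your concrete choice $\alpha=\beta=\varphi_{L}$ takes care of this.
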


Consider the limit case of inequality (32)
$$
 \delta^{3} = \frac{9}{4} j_{x}^{max}(1+\delta^{2}) / \sqrt{2+ \delta^{2}}
$$
or
\begin{equation}
    \delta = \left (\frac{9}{4}j_{x}^{max} K(\delta)\right )^{1/3}
\end{equation}
$K(\delta)=(1+\delta^{2})/\sqrt{2+\delta^{2}}$. Substituting (35) in (31) we obtain
$$
u_{0}= \left (\frac{9}{4}j_{x}^{max} K(\delta)\right )^{2/3} x^{4/3}
$$
From the last equation, applying condition $u_{0}(D) = V$ we obtain
\begin{equation}
    j_{x}^{max}= j_{CL}= \frac{4}{9}\frac{1}{K(\delta)}\frac{V^{3/2}}{D^{2}}.
\end{equation}
Eq. (36) is the Child-Langmuir law in dimensionless variables, which corresponds equation (30) up to a constant $K(\delta)$ with conditions (32), (34). 

\begin{prop}\label{prop4} Let $D$ be the solution of the initial value problem
$$
D'' = j_{x}\left(6\sqrt{D} + \frac{2}{\sqrt{D}} - 4\gamma\right), \;\;\;\;\;D(0) = D'(0) = 0
$$
according to Proposition \ref{prop2} on $[0,\varepsilon)$.  Assume $(U,V)$ is a $C^{1}[0,\varepsilon) \cap C^{2}(0,\varepsilon)$ - solution of
\begin{align*}
u'' =& j_{x} \frac{U}{\sqrt{D}} , \qquad U(0) = 1, \qquad U'(0) = 0\\
V'' =& j_{x}\frac{V}{\sqrt{D}}, \qquad V(0) = 0, \qquad V'(0) = \beta, \qquad \beta^{2} = 2j_{x}\gamma
\end{align*}
on $[0,\varepsilon)$. Then $U^{2} - 1 - V^{2} = D$.
\end{prop}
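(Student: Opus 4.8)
The plan is to set $E := U^{2}-1-V^{2}$ and $F := E-D$, show that $F$ satisfies a homogeneous linear (integro-)differential equation with vanishing Cauchy data at $x=0$, and then conclude $F\equiv 0$, which is exactly the assertion $U^{2}-1-V^{2}=D$.

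First I would record that $F\in C^{1}[0,\varepsilon)\cap C^{2}(0,\varepsilon)$ (since $U,V$ and $D$ are) and compute the initial data: $E(0)=U(0)^{2}-1-V(0)^{2}=0=D(0)$ and $E'(0)=2\bigl(U(0)U'(0)-V(0)V'(0)\bigr)=0=D'(0)$, so $F(0)=F'(0)=0$. Next, differentiating $E$ twice and using $U''=j_{x}U/\sqrt{D}$, $V''=j_{x}V/\sqrt{D}$ gives
\[
E'' = 2\bigl(U'^{2}-V'^{2}\bigr)+\frac{2j_{x}(E+1)}{\sqrt{D}} .
\]
To control $Q:=U'^{2}-V'^{2}$, I would note $Q'=2(U'U''-V'V'')=\tfrac{2j_{x}}{\sqrt{D}}(U'U-V'V)=\tfrac{j_{x}E'}{\sqrt{D}}$, while $\tfrac{j_{x}D'}{\sqrt{D}}=2j_{x}(\sqrt{D})'$, so that $\bigl(Q-2j_{x}\sqrt{D}\bigr)'=\tfrac{j_{x}F'}{\sqrt{D}}$. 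Integrating from $0$ is legitimate because $F'$ is continuous with $F'(0)=0$ and $1/\sqrt{D}$ is integrable at $0$ by Lemma~\ref{lem2}; using $Q(0)=U'(0)^{2}-V'(0)^{2}=-\beta^{2}=-2j_{x}\gamma$ and $\sqrt{D(0)}=0$ this yields
\[
Q = 2j_{x}\bigl(\sqrt{D}-\gamma\bigr)+j_{x}\int_{0}^{x}\frac{F'(t)}{\sqrt{D(t)}}\,dt .
\]

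Now I would substitute this into the formula for $E''$ and subtract the equation $D''=j_{x}\bigl(6\sqrt{D}+2/\sqrt{D}-4\gamma\bigr)$ of Proposition~\ref{prop2}, writing $E=D+F$ throughout. The $\gamma$-terms, the $2j_{x}/\sqrt{D}$-terms and the $\sqrt{D}$-terms cancel, leaving the homogeneous equation
\[
F'' = \frac{2j_{x}}{\sqrt{D}}\,F + 2j_{x}\int_{0}^{x}\frac{F'(t)}{\sqrt{D(t)}}\,dt ,\qquad F(0)=F'(0)=0 .
\]
For the uniqueness I would argue on a short interval $[0,\delta]$. Let $\Phi(\delta)=\int_{0}^{\delta}D^{-1/2}$, which is finite and tends to $0$ as $\delta\to0$ by Lemma~\ref{lem2}, and put $m:=\max_{[0,\delta]}|F'|$. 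Since $F(0)=F'(0)=0$, we have $|F(t)|\le tm$ and $\int_{0}^{t}|F'|/\sqrt{D}\le m\Phi(\delta)$, so integrating the equation twice gives $|F'(x)|\le 4j_{x}\,\delta\,\Phi(\delta)\,m$ on $[0,\delta]$, hence $m\le 4j_{x}\delta\Phi(\delta)\,m$; choosing $\delta$ so small that $4j_{x}\delta\Phi(\delta)<1$ forces $m=0$. Thus $F\equiv0$ on $[0,\delta]$, and on $[\delta,\varepsilon)$ the coefficient $j_{x}/\sqrt{D}$ is continuous, so $F$ solves a regular homogeneous linear ODE with zero data at $\delta$ and therefore vanishes identically. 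Consequently $U^{2}-1-V^{2}=E=D$.

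The main obstacle is the singularity at $x=0$: individually $E''$ and $D''$ blow up like $1/\sqrt{D}\sim x^{-2/3}$, and a tractable equation for $F$ appears only after these singular parts cancel; justifying the integrations starting from $0$ and the concluding Gronwall-type estimate rests essentially on the integrability of $1/\sqrt{D}$ at the origin provided by Lemma~\ref{lem2}. The remaining steps are routine differentiation and bookkeeping of cancellations.
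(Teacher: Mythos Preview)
Your proof is correct and follows essentially the same route as the paper: both derive the same homogeneous integro-differential equation $F''=\tfrac{2j_x}{\sqrt{D}}F+2j_x\int_0^x \tfrac{F'}{\sqrt{D}}$ for $F=U^2-1-V^2-D$ with $F(0)=F'(0)=0$, relying on the integrability of $1/\sqrt{D}$ at the origin (Lemma~\ref{lem2}), and then argue $F\equiv0$. The only cosmetic difference is in the endgame---the paper integrates once more (with parts) to a Volterra equation for $F'$ and applies a Gronwall argument, whereas you use a direct sup-norm contraction on a short interval followed by continuation; your continuation step should, strictly speaking, treat the equation as a first-order linear \emph{system} (introducing $G=\int_\delta^x F'/\sqrt{D}$) rather than a pure second-order ODE, but this is a trivial fix.
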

\begin{proof} We let $\theta =: U^{2} - 1 - V^{2}$. Then
\begin{align}\label{prop3.5eq1}
\theta^{'} =& 2(UU^{'} - VV^{'}) \nonumber\\
\theta'' =& 2(U'^{2} - V'^{2}) + 2(UU'' - VV'').           
\end{align}
Now
$$
(U'^{2} - V'^{2})' = 2U'U'' - 2V'V'' = j_{x} \frac{2U'U - 2V'V}{\sqrt{D}} = j_{x} \frac{\theta'}{\sqrt{D}}.
$$
Integration from $0$ to $x$ gives
\begin{equation}\label{prop3.5eq2}
U'^{2}(x) - V'^{2}(x) = j_{x}\int\limits_{0}^{x} \frac{\theta'(t)}{\sqrt{D}} dt - 2\gamma j_{x}.
\end{equation}
Furthermore, we have
\begin{equation}\label{prop3.5eq3}
UU'' - VV'' = j_{x} \frac{U^{2} - V^{2}}{\sqrt{D}} = j_{x} \frac{\theta + 1}{\sqrt{D}}.          
\end{equation}
We insert \eqref{prop3.5eq2} and \eqref{prop3.5eq3} in \eqref{prop3.5eq1} and obtain
\begin{equation}\label{prop3.5eq4}
\theta'' = 2j_{x}\left(\int\limits_{0}^{x} \frac{\theta'(t)}{\sqrt{D}}dt -2\gamma +
\frac{\theta + 1}{\sqrt{D}}\right)   , \;\;\;\;\theta(0) = \theta'(0) = 0.
\end{equation}
By assumption $D$ satisfies, with Lemma \ref{lem2}
\begin{align}\label{prop3.5eq5}
D'' =& j_{x}\left(6\sqrt{D} + \frac{2}{\sqrt{D}} - 4\gamma\right) \nonumber\\
=& j_{x}\left(4\sqrt{D} - 4\gamma + 2\frac{D +1}{\sqrt{D}}\right) \nonumber\\
=& 2 j_{x} \left(\int\limits_{0}^{x}\frac{D'(t)}{\sqrt{D}}dt - 2\gamma + \frac{D + 1}{\sqrt{D}}\right), \qquad D(0) = D'(0) =0.   
\end{align}
Equations \eqref{prop3.5eq4} and \eqref{prop3.5eq5} imply: $\theta$ and $D$ are both solutions of the linear problem
$$
y'' = 2j_{x}\left( \int\limits_{0}^{x}\frac{y'(t)}{\sqrt{D(t)}} dt - 2\gamma + \frac{y + 1}{\sqrt{D}}\right), \;\;\;\;  y(0) = y'(0) = 0.
$$
We need to show: $z =: \theta - D  = 0$. We have
$$
z''(x) = 2 j_{x}\left( \int\limits_{0}^{x} \frac{z'(t)}{\sqrt{D}} dt + \frac{z(x)}{\sqrt{D}}\right), \;\;\;\;\; z(0) = z'(0) = 0.
$$
Integration from $0$ to $x$ yields with partial integration and $\mathcal{D}(x) =: \int\limits_{0}^{x}\frac{1}{\sqrt{D(s)}} ds$,
\begin{align*}
z'(x) =& 2 j_{x}\left(\int\limits_{0}^{x} 1\cdot \int\limits_{0}^{s}\frac{z'(t)}{\sqrt{D(t)}} dt + \int\limits_{0}^{x}\frac{z(s)}{\sqrt{D(s)}} ds \right)\\
=&2 j_{x}\left( x\cdot \int\limits_{0}^{x} \frac{z'(t)}{\sqrt{D(t)}} dt - \int\limits_{0}^{x} s\frac{z'(s)}{\sqrt{D(s)}} ds + \int\limits_{0}^{x}\frac{1}{\sqrt{D(s)}}\cdot \int\limits_{0}^{s} z'(t) dt \right)\\
=& 2 j_{x} \left( \int\limits_{0}^{x} (x - s) \frac{1}{\sqrt{D(s)}} z'(s) ds + \mathcal{D}(x)\cdot \int\limits_{0}^{s} z'(t) dt - \int\limits_{0}^{x} \mathcal{D} (s) z'(s) ds\right)\\
=& 2 j_{x} \int\limits_{0}^{x} \left(\frac{x - s}{\sqrt{D(s)}} + \mathcal{D}(x) - \mathcal{D} (s)\right) z'(s) ds.
\end{align*}
For every $\eta\in (0,\varepsilon)$ exists $g_{\eta} \in L^{1} [0, \varepsilon - \eta]$ such that for all $x\in [0, \varepsilon - \eta]$ and $s \in [0,x] $
$$
2 j_{x} \left | \frac{x - s}{\sqrt{D(s)}} + \mathcal{D}(x) - \mathcal{D}(s)\right | \le g_{\eta}(s). 
$$
Hence for all $x\in [0,\varepsilon - \eta]$
$$
|z'(x)| \le \int\limits_{0}^{x} g_{\eta}(s)|z'(s)| ds.
$$
Let $\rho(x) =: \int\limits_{0}^{x} g_{\eta}(s) |z'(s)| ds$. Then
$$
\rho'(x) = g_{\eta}(x) |z'(x)| \le g_{\eta}(x) \rho(x), \;\;\;\;\;   x\in (0,\varepsilon - \eta].
$$
For all $\delta, x \in (0,\varepsilon - \eta)$ it follows
$$
\rho(x) \leq \underbrace{\rho(\delta)}_{\rightarrow 0} \cdot \underbrace{e^{\int\limits_{\delta}^{x} g_{\eta}(s)ds}}_{{\rm bounded\;\; as}\;\; \delta\rightarrow 0} \rightarrow 0.   \;\;\;  (\delta\rightarrow 0).
$$
Hence $\rho(x) = 0$ on $(0,\epsilon - \eta)$ and $|z'(x)|\leq \rho(x)= 0$ on 
$(0,\epsilon - \eta)$. Because $z(0)= 0$, we have $z(x) = 0$ on $[0, \epsilon - \eta)$ for all $\eta\in (0,\epsilon)$ which implies $z = 0$ on $[0,\epsilon)$. Hence
$\theta = D$. 
\end{proof}
We now prove that the assumption of Proposition \ref{prop3} (the existence of $(U, V)$) is satisfied.

\begin{thm}  a) Let $D\in C(0,\epsilon)$ be positive and $\frac{1}{\sqrt{D}}$
integrable at $0$. Then:

For $\alpha, \beta \in R$ there exists a unique solution $(u,v)\in(C^{1} [0,\epsilon)\cap    C^{2}(0,\epsilon))^{2}$ of the initial value problem
\begin{equation}\label{thm3.6eq1}
\begin{array}{l}
u'' = j_{x}\frac{u}{\sqrt{D}}, \qquad  u(0) = 1, \qquad  u'(0) =\alpha\\
v'' =j_{x}\frac{v}{\sqrt{D}},  \qquad  v(0) = 0, \qquad v'(0) =\beta. 
\end{array}
\end{equation}
b) If $\alpha =0$, $\gamma$ is defined by $\beta^{2} = 2j_{x}\gamma$ and $D$ the solution of the initial value problem
\begin{equation}\label{thm3.6eq2}
D'' = j_{x} \left (6\sqrt{D} + \frac{2}{\sqrt{D}} - 4\gamma \right ), \qquad D(0) = D'(0) = 0  
\end{equation}
according to Proposition \ref{prop2}, then $(u,v)$ obtained in a) is the solution of 
\begin{equation}\label{thm3.6eq3}
\begin{array}{l}
u'' = j_{x}\displaystyle\frac{u}{\sqrt{u^{2} - 1 - v^{2}}},      \qquad u(0) = 1, \qquad u(0) = 0\\
v'' = j_{x} \displaystyle\frac{v}{\sqrt{u^{2} - 1 - v^{2}}},    \qquad v(0) = 0, \qquad v'(0) =\beta     
\end{array}
\end{equation}
\end{thm}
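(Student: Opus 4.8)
The plan is to treat part a) as a linear Volterra problem whose only real difficulty is the integrable singularity of the coefficient $1/\sqrt{D}$ at $x=0$, and then to obtain part b) as an immediate consequence of Proposition \ref{prop4}.

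For part a), I would integrate the system \eqref{thm3.6eq1} twice against the initial data, recasting the IVP as the equivalent pair of Volterra integral equations of the second kind
\[
u(x) = 1 + \alpha x + j_{x}\int_{0}^{x}(x-t)\,\frac{u(t)}{\sqrt{D(t)}}\,dt,
\qquad
v(x) = \beta x + j_{x}\int_{0}^{x}(x-t)\,\frac{v(t)}{\sqrt{D(t)}}\,dt .
\]
Since $D$ is continuous and positive on $(0,\epsilon)$ and $1/\sqrt{D}$ is integrable at $0$, the map $t\mapsto 1/\sqrt{D(t)}$ lies in $L^{1}[0,\delta]$ for every $\delta<\epsilon$, so the kernel $(x-t)/\sqrt{D(t)}$ is integrable. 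Defining $T$ on $C([0,\delta];\mathbb{R}^{2})$ with the supremum norm by the two right-hand sides, I would estimate
\[
\bigl\|T(u_{1},v_{1})-T(u_{2},v_{2})\bigr\|_{\infty}
\le j_{x}\,\delta\!\int_{0}^{\delta}\!\frac{dt}{\sqrt{D(t)}}\;\bigl\|(u_{1},v_{1})-(u_{2},v_{2})\bigr\|_{\infty},
\]
and since $\int_{0}^{\delta}dt/\sqrt{D(t)}\to0$ as $\delta\to0$, for small $\delta$ the map $T$ is a contraction, so the Banach fixed point theorem yields a unique solution on $[0,\delta]$. Differentiating the integral representation shows the term $j_{x}\int_{0}^{x}(x-t)u(t)/\sqrt{D(t)}\,dt$ is $C^{1}$ up to $0$ with derivative $j_{x}\int_{0}^{x}u(t)/\sqrt{D(t)}\,dt\to0$ as $x\to0$, so $u,v\in C^{1}[0,\delta]$ with $u'(0)=\alpha$, $v'(0)=\beta$; on $(0,\delta)$ the coefficient $1/\sqrt{D}$ is continuous, hence $u''$ and $v''$ are continuous there, i.e. $u,v\in C^{2}(0,\delta)$. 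Finally, on $[\delta,\epsilon)$ the coefficient $j_{x}/\sqrt{D}$ is continuous, so the classical theory of linear ODEs extends $(u,v)$ uniquely to all of $[0,\epsilon)$ with no possibility of blow-up; equivalently one may run the contraction directly on $C([0,\epsilon);\mathbb{R}^{2})$ with a suitably weighted norm. This proves a), uniqueness being inherited from the fixed-point step and from linear uniqueness beyond $\delta$.

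For part b), put $\alpha=0$, let $\gamma$ be defined by $\beta^{2}=2j_{x}\gamma$, and let $D$ be the solution of \eqref{thm3.6eq2} furnished by Proposition \ref{prop2}. By that proposition $D\in C^{1}[0,\epsilon)\cap C^{2}(0,\epsilon)$, $D(0)=D'(0)=0$ and $D>0$ on $(0,\epsilon)$, while by Lemma \ref{lem2} the function $1/\sqrt{D}$ is integrable at $0$; hence part a) applies and produces the unique $(u,v)$ solving \eqref{thm3.6eq1}. This $(u,v)$ meets exactly the hypotheses of Proposition \ref{prop4}, which therefore gives $u^{2}-1-v^{2}=D$. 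Substituting $\sqrt{D}=\sqrt{u^{2}-1-v^{2}}$ into the equations of a) turns them into the nonlinear system \eqref{thm3.6eq3}; the initial conditions $u(0)=1$, $u'(0)=0$, $v(0)=0$, $v'(0)=\beta$ carry over unchanged; and since $u^{2}-1-v^{2}=D>0$ on $(0,\epsilon)$ the pair $(u,v)$ never enters $\Omega$, so it is a genuine solution of the nonlinear IVP in the sense of the definition of solution given above.

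The hardest part will be handling the singularity of $1/\sqrt{D}$ at the cathode $x=0$, where standard Picard--Lindelöf existence does not apply directly. It is resolved by the hypothesis that $1/\sqrt{D}$ be integrable at $0$, which in part b) is guaranteed by the $x^{4/3}$ asymptotics of Lemma \ref{lem2}; this is precisely what keeps the Volterra kernel in $L^{1}$ and makes the fixed-point argument go through. The remaining steps are routine.
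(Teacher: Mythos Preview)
Your proof is correct and follows essentially the same route as the paper: both recast \eqref{thm3.6eq1} as the Volterra integral equations obtained by double integration, apply Banach's fixed point theorem on $C[0,\delta]$ for small $\delta$ using the integrability of $1/\sqrt{D}$, and then continue by linear ODE theory on $[\delta,\epsilon)$; for part b) both invoke Proposition~\ref{prop2} and Lemma~\ref{lem2} to feed part a), and then Proposition~\ref{prop4} to identify $u^{2}-1-v^{2}$ with $D$. Your write-up is in fact a bit tidier about the regularity bookkeeping and about checking that $(u,v)$ avoids $\Omega$.
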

\begin{proof}
 a) Assume that $(u,v)\in (C^{1}[0,\epsilon) \cap C^{2}(0,\epsilon))^{2}$ is a solution of \eqref{thm3.6eq1} on $[0,\epsilon)$. Integrating \eqref{thm3.6eq1} twice from $0$ to $x$ and 
using initial conditions in \eqref{thm3.6eq1} we obtain
\begin{align}\label{thm3.6eq4}
u'(x) =& \alpha + j_{x}\int\limits_{0}^{x} \frac{u(t)}{\sqrt{D(t)}}dt\nonumber\\
u(x) =& 1+ \alpha x + j_{x}\int\limits_{0}^{x}\int\limits_{0}^{s}\frac{u(t)}{\sqrt{D(t)}}dtds\nonumber\\
& = 1 + \alpha x + j_{x}\int\limits_{0}^{x}(x - s) \frac{u(s)}{\sqrt{D(s)}} ds 
\end{align}
\begin{align}\label{thm3.6eq5}
  v'(x) =&\beta +j_{x}\int\limits_{0}^{x}\frac{v(t)}{\sqrt{D(t)}} dt\nonumber\\
v(x) =& \beta x + \int\limits_{0}^{x}\int\limits_{0}^{s} \frac{v(t)}{\sqrt{D(t)}}dt ds \nonumber\\
& = \beta x +j_{x}\int\limits_{0}^{x} (x-s) \frac {v(s)}{\sqrt{D(s)}} ds. 
\end{align}     
Consequently, every solution $(u, v)\in C[0,\epsilon)^{2}$ of  equations \eqref{thm3.6eq4} and \eqref{thm3.6eq5} is 
in $(C^{1}[0,\epsilon)\cap C^{2}(0,\epsilon))^{2}$ and satisfies \eqref{thm3.6eq1}. It is sufficient to prove the existence of such solutions on a small interval $[0,\delta]$, because there always exists a solution of the linear differential equations \eqref{thm3.6eq1} on 
$[\delta, \epsilon)$. To this end, we use Banach's fixed point theorem as follows.

We define $T_{1}: C[0,\delta] \rightarrow C[0,\delta]$ by
$$
T_{1} w(x) := 1 + \alpha x + j_{x}\int\limits_{0}^{x}(x - s) \frac{w(s)}{\sqrt{D(s)}}ds, \;\;\;\; w\in C[0,\delta].
$$
If $\parallel w\parallel \leq M$, then
$$
|T_{1}w(x)|\leq 1+|\alpha| x + j_{x}\cdot x \int\limits_{0}^{x}\frac{M}{\sqrt{D(s)}}\;ds
\leq 1+ |\alpha|\delta + j_{x}\delta M \frac{1}{\parallel \frac{1}{\sqrt{D}}\parallel_{1}}.
$$
For $w_{1}, w_{2} \in C[0,\delta]$ we have
$$
|(T_{1} w_{1} - T_{1}w_{2})(x)| \leq j_{x}|x| \int\limits_{0}^{x} \frac{\parallel w_{1} - w_{2}\parallel}{\sqrt{D(s)}}\; ds \leq j_{x}\delta \parallel w_{1} - w_{2}\parallel 
\cdot \frac{1}{\parallel \sqrt{\frac{1}{D}}\parallel_{1}}.
$$
Similarly, we define $T_{2}: C[0,\delta] \rightarrow C[0,\delta]$ by
$$
T_{2}w(x) := \beta x + j_{x}\int\limits_{0}^{x}(x - s) \frac{w(x)}{\sqrt{D(s)}} \;ds, \;\;\;\;
w\in C[0,\delta].
$$
If $\parallel w\parallel \leq M$, then
$$
|T_{2}w(x)| \leq |\beta| x + j_{x}\cdot x\int\limits_{0}^{x}\frac{M}{\sqrt{D(s)}}\;ds \leq |\beta| \delta + j_{x} \delta\cdot M\frac{1}{\parallel\frac{1}{\sqrt{D}}\parallel_{1}}.
$$
For $w_{1}, w_{2} \in C[0,\delta]$ we have
$$
|(T_{2}w_{1} - T_{2}w_{2})(x)|\leq j_{x}|x|\int\limits_{0}^{x}\frac{\parallel w_{1} - w_{2}\parallel}{\sqrt{D(s)}}\;ds \leq j_{x}\delta\parallel w_{1} - w_{2}\parallel \cdot \frac{1}{\parallel\frac{1}{\sqrt{D}}\parallel_{1}}.
$$  
Consequently, $T_{1}, T_{2}$ are contracting mappings from $\{w\in C[0,\delta], \parallel w\parallel \leq M\}$ into itself if 
$$
1 + \delta \left ( |\alpha| + j_{x}M\frac{1}{\parallel\frac{1}{\sqrt{D}}\parallel_{1}}\right ) \leq M
$$
$$
\delta \left( |\beta| + j_{x}M\frac{1}{\parallel\frac{1}{\sqrt{D}}\parallel_{1}}\right ) \leq M
$$ 
$$
\delta j_{x}M\frac{1}{\parallel\frac{1}{\sqrt{D}}\parallel_{1}} <1.
$$
For every $M>1$ there exists $\delta$ satisfying these conditions. This proves a).

b) Let $\alpha, \beta \in R$ and define $\gamma$ by $\alpha^{2} - \beta^{2} = -2\gamma j_{x}$. It follows from Proposition \ref{prop2}, that the solution $D$ of the initial value problem \eqref{thm3.6eq2} is in $C(0,\delta)$ and positive and from Lemma \ref{lem2} that $\frac{1}{\sqrt{D}}$ is integrable at $0$. It follows from a), that there exists a unique solution $(u,v) \in (C_{1}[0,\epsilon)\cap C^{2}(0,\epsilon))^{2}$ of the initial value problem \eqref{thm3.6eq1}. By the choice of $\gamma$, $(u, v)$ is a solution of the initial value problem \eqref{prop3.5eq1}, and Proposition \ref{prop3} then says that $u^{2} - 1 - v^{2} = D$, that is, $(u,v)$ solves \eqref{thm3.6eq3}.
\end{proof}
\section{The Isolated Case}
In this Section we study the magnetically insulated diode, $\theta<0$.We are looking for the conditions under which this phenomenon occurs. 
The electron flow depends on $j_{x}, \beta, k$,  the boundary conditions at the anode and the unknown point $x_{d}$.

We search a point $x_{d}$

\begin{equation}\label{ISC-1}
\begin{array}{ll}
\forall x\in (0,x_{d}],\quad & \theta'(x)\geq 0 \,\, \wedge \,\, \theta(x) \geq 0,\\
\forall x\in (x_{d},1], \quad & \theta'(x)<0 \,\, \wedge \,\, \theta(x) < 0.  
\end{array}    
\end{equation}

In order to implement this process, we consider the system \eqref{MID-2} in terms of $\varphi$ and $a$ and we write it via the effective potential. 

\begin{equation}\label{ISC-2}
    \theta' = 2\left((1+\varphi)\varphi' - aa' \right)          \hspace{3cm}
\end{equation}
\begin{equation}\label{ISC-3}
    \theta'' = 2\left((1+\varphi)\varphi'' - aa'' + (\varphi')^2-(a')^2 \right)
\end{equation}
\begin{equation}\label{ISC-4}
    \varphi'' = j_x (1+\varphi) \theta^{-\frac{1}{2}}
\end{equation}
\begin{equation}\label{ISC-5}
    a'' = j_x a \title{}\theta^{-\frac{1}{2}}.            \hspace{1cm}
\end{equation}

We obtain from  \eqref{ISC-2}, \eqref{ISC-3}, \eqref{ISC-4}, \eqref{ISC-5}:

$$\theta'' = 2\left( j_x \theta^{-\frac{1}{2}} (\theta + 1) + (\varphi')^2-(a')^2 \right),$$
$$j_x \theta^{\frac{-1}{2}}\theta'= 2\varphi ''\varphi'- 2a''a' $$
$$
\begin{array}{c}
\left( 2j_x\theta^{\frac{1}{2}} \right) '=\left[ (\varphi')^2-(a')^2 \right.]'  \\
  2j_x\theta^{\frac{1}{2}} + c_1 = (\varphi')^2-(a')^2 \\
\end{array}
$$
\begin{equation}\label{theta2d}
\dfrac{\theta''}{2}= j_x \theta^{\frac{-1}{2}} ( \theta + 1 )+ 2j_x\theta^{\frac{1}{2}} + c_1
\end{equation}
\begin{equation}\label{thetacon}
\begin{array}{c}
\theta(0)= (1+0)^2-1-0=0 \hspace{1cm}\\
\theta(1)= (1+\varphi_L)^2-1-a_L=\theta_L \hspace{0.3cm} \\ \theta'(0)=2(1+0)\beta - 2(0)(\alpha)=2\beta
\end{array}
\end{equation}

Integrating ODE \eqref{theta2d} with conditions \eqref{thetacon}  we get
$$ (\theta ')^2 = 8 j_x \theta^{\frac{3}{2}} + 8 j_x \theta^{\frac{1}{2}} + 4 c_1 \theta + 2c_2$$
\begin{equation}\label{theta1d}
    (\theta ')^2 = k \theta + 8 j_x \theta^{\frac{3}{2}} + 8 j_x \theta^{\frac{1}{2}}  + 4\beta^2.
\end{equation}

\subsection { $\theta' = 0$}

 We introduce notations $\theta(x_d) = \theta_d$, $\theta'(x_d)=0$: 

$$\theta'(x_d)^2 = k \theta_d + 8 j_x \theta_d^{\frac{3}{2}} + 8 j_x \theta_d^{\frac{1}{2}}  + 4\beta^2 = 0$$

\begin{equation}\label{eqthetad}
    k  \theta_d + 8 j_x \theta_d^{\frac{3}{2}} + 8 j_x \theta_d^{\frac{1}{2}}  + 4\beta^2 = 0.              \hspace{1.5cm}
\end{equation}
Our goal to solve Eq. \eqref{eqthetad} and consider the bifurcation problem of solutions depending on the parameters $j_{x}$, $\beta$, $k$ and boundary conditions on anode. 
Substitution $\Theta_d = u^2$ gives the following third order polynomial:

$$8 j_x u^3 + k u^2 +  8 j_x u  + 4\beta^2 = 0. $$
Notations 
$$\hat{k} = \dfrac{k}{8 j_x} \,\,\,\,\,\,\,\,,\,\,\,\,\,\,\, \hat{\beta} = \dfrac{4 \beta^2}{8 j_x}  \,\,\,\,\,\,\,\,,\,\,\,\,\,\,\, j_x \neq 0$$
 reduces the cubic equation to the form 

\begin{equation}\label{cubic}
    u^3 + \hat{k} \, u^2 +  u  + \hat{\beta} = 0.
\end{equation}

\subsubsection{Cubic equation}

We know by the Galois group theory \cite{Stewart2015} that any polynomial of grade $n$ lower than 4 has $n$ complex solutions. In our case, we want to find  three analytical solutions related to the equation \eqref{cubic}.

We  define the discriminant of  equation:
$$\Delta_u = 18\hat{k}\hat{\beta} + \hat{k}^2 - 4 - 4\hat{k}^3\hat{\beta} - 27 \hat{\beta}^2.$$

\begin{prop} \label{u_solutions_prop}

The cubic equation $u^3 + \hat{k} \, u^2 +  u  + \hat{\beta} = 0$ has the following  solutions in $\mathbb{C}$:

\begin{enumerate}
    \item $\Delta_u < 0$ :

    $$\begin{array}{rcl}
        u_1 & =  & -\dfrac{\hat{k}}{3} + \dfrac{\sqrt[3]{4}}{18} \left( \sqrt[3]{A_1 + A_2} + \sqrt[3]{A_1 - A_2} \right) \\
    \end{array}$$

    $$\begin{array}{rcl}
        u_2 & = & \left[ - \dfrac{\hat{k}}{3}  - \dfrac{\sqrt[3]{4}}{36} \left( \sqrt[3]{A_1 + A_2} + \sqrt[3]{A_1 - A_2} \right) \right] + \\ \\
        & & \left[ \dfrac{\sqrt{3}\sqrt[3]{4}}{36} \left( \sqrt[3]{A_1 + A_2} - \sqrt[3]{A_1 - A_2} \right) \right]i
    \end{array}$$

    $$\begin{array}{rcl}
        u_3 &=& \left[ - \dfrac{\hat{k}}{3}  - \dfrac{\sqrt[3]{4}}{36} \left( \sqrt[3]{A_1 + A_2} + \sqrt[3]{A_1 - A_2} \right) \right] + \\  \\
        & &\left[ - \dfrac{\sqrt{3}\sqrt[3]{4}}{36} \left( \sqrt[3]{A_1 + A_2} - \sqrt[3]{A_1 - A_2} \right) \right]i \\
    \end{array}$$
    
    $$\begin{array}{rcl}
        A_1 &=& -\left( 54\hat{k}^3 - 243\hat{k} + 729\hat{\beta} \right) \\  \\
        A_2& = &\sqrt{\left( 54\hat{k}^3 - 243\hat{k} + 729\hat{\beta} \right)^2 + 2916 \left(3 - \hat{k}^2\right)^3 } \\
    \end{array}$$
         
    \item  $\Delta_u = 0$ , $\hat{\beta} = \pm \dfrac{\sqrt{3}}{9}$,  $\hat{k}=\pm \sqrt{3}$:

    $$\begin{array}{rcl}
        u &=& \mp \dfrac{\sqrt{3}}{3} \\
    \end{array}$$

    \item  $\Delta_u = 0$ , $\hat{\beta} \neq \pm \dfrac{\sqrt{3}}{9}$,  $\hat{k}\neq\pm \sqrt{3}$:

        $$
        \begin{array}{rcl}
            u_1 &=&\dfrac{\hat{k}^3 - 4\hat{k} + 9\hat{\beta}}{3 - \hat{k}^2}\\ \\
            u_2 &=& \dfrac{- \hat{k} + 9\hat{\beta}}{2\hat{k}^2 - 6} \\
        \end{array}
        $$

    \item  $\Delta_u > 0$:
    $$\begin{array}{rcl}
        u_1 &=& A_3 \, \cos{\left( \dfrac{1}{3}\arccos{\left(  A_4 \right)} \right)} - \dfrac{\hat{k}}{3}\\
        u_2 &=& A_3 \, \cos{\left( \dfrac{1}{3}\arccos{\left(  A_4 \right)} + \dfrac{2\pi}{3} \right)} - \dfrac{\hat{k}}{3}\\
        u_3 &=& A_3 \, \cos{\left( \dfrac{1}{3}\arccos{\left(  A_4 \right)} + \dfrac{4\pi}{3}\right)} - \dfrac{\hat{k}}{3} \\
    \end{array}$$

    $$\begin{array}{rcl}
        A_3 &=& \dfrac{2}{3}\sqrt{\hat{k}^2-3} \\ \\
        A_4 &=& \dfrac{4\hat{k}^3-9\hat{k}+27\hat{\beta}}{18 - 6\hat{k}^2} \sqrt{\dfrac{9}{\hat{k}^2-3}}
    \end{array}$$
\end{enumerate}

\end{prop}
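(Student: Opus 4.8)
The plan is to reduce the cubic to its depressed form and then identify each case with a classical root formula. First I would apply the Tschirnhaus shift $u = t - \hat{k}/3$, which removes the quadratic term and produces $t^{3} + pt + q = 0$ with
$$p = 1 - \frac{\hat{k}^{2}}{3}, \qquad q = \hat{\beta} - \frac{\hat{k}}{3} + \frac{2\hat{k}^{3}}{27}.$$
Expanding the cubic discriminant $18\hat{k}\hat{\beta} - 4\hat{k}^{3}\hat{\beta} + \hat{k}^{2} - 4 - 27\hat{\beta}^{2}$ shows it coincides with $\Delta_u$, and since the polynomial discriminant is invariant under a translation of the unknown, $\Delta_u = -4p^{3} - 27q^{2}$. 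Consequently $\Delta_u > 0$, $\Delta_u = 0$, $\Delta_u < 0$ correspond respectively to three distinct real roots, a repeated root, and one real root together with a complex-conjugate pair; moreover $\Delta_u > 0$ forces $p < 0$ (that is, $\hat{k}^{2} > 3$), and $\Delta_u < 0$ is equivalent to $q^{2}/4 + p^{3}/27 > 0$. The four items will then follow by writing the appropriate formula for $t$ and substituting back $t = u + \hat{k}/3$.

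For item~1 I would take the real root from Cardano's formula,
$$u_{1} = -\frac{\hat{k}}{3} + \sqrt[3]{-\tfrac{q}{2} + \sqrt{\tfrac{q^{2}}{4} + \tfrac{p^{3}}{27}}} + \sqrt[3]{-\tfrac{q}{2} - \sqrt{\tfrac{q^{2}}{4} + \tfrac{p^{3}}{27}}},$$
with the two cube roots chosen so that their product equals $-p/3$, and obtain $u_{2}, u_{3}$ by multiplying these cube roots by the primitive cube roots of unity $-\tfrac{1}{2} \pm \tfrac{\sqrt{3}}{2}\,i$ and separating real and imaginary parts. Substituting $p$ and $q$ and clearing denominators turns $-q/2 \pm \sqrt{q^{2}/4 + p^{3}/27}$ into a fixed scalar multiple of $A_{1} \pm A_{2}$: one checks $A_{1} = -729\,q$ and $A_{2} = 27\sqrt{729\,q^{2} + 108\,p^{3}}$ (using $p^{3} = (3-\hat{k}^{2})^{3}/27$), so that $A_{1} \pm A_{2} = 1458\,(-q/2 \pm \sqrt{q^{2}/4 + p^{3}/27})$, and the prefactor $\sqrt[3]{4}/18 = 1/\sqrt[3]{1458}$ (via $18^{3} = 4\cdot 1458 = 5832$) restores the correct scaling, with $\sqrt[3]{4}/36$ playing the same role for $u_{2}, u_{3}$. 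Reconciling this rescaling is the one point that requires genuine care; the rest is direct substitution.

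For item~4, where $\Delta_u > 0$ and hence $p < 0$, I would use the trigonometric (casus irreducibilis) solution
$$t_{m} = 2\sqrt{-\tfrac{p}{3}}\;\cos\!\left(\tfrac{1}{3}\arccos\!\left(\tfrac{3q}{2p}\sqrt{-\tfrac{3}{p}}\right) + \tfrac{2\pi m}{3}\right), \qquad m = 0,1,2,$$
observing that the argument of $\arccos$ has square $-27q^{2}/(4p^{3}) < 1$ precisely because $\Delta_u = -4p^{3} - 27q^{2} > 0$. Substituting $p = (3-\hat{k}^{2})/3$ and $q = (2\hat{k}^{3} - 9\hat{k} + 27\hat{\beta})/27$ gives $A_{3} = 2\sqrt{-p/3} = \tfrac{2}{3}\sqrt{\hat{k}^{2} - 3}$ and, after simplification, $\tfrac{3q}{2p}\sqrt{-3/p} = A_{4}$; shifting each $t_{m}$ by $-\hat{k}/3$ yields $u_{1}, u_{2}, u_{3}$.

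Finally, when $\Delta_u = 0$ the cubic has a repeated root. The case $p = 0$ occurs exactly for $\hat{k}^{2} = 3$, and then $\Delta_u = -27q^{2} = 0$ forces $q = 0$, i.e.\ $\hat{\beta} = \hat{k}/3 - 2\hat{k}^{3}/27 = \pm\sqrt{3}/9$; the triple root is $t = 0$, so $u = -\hat{k}/3 = \mp\sqrt{3}/3$, which is item~2. When $p \neq 0$, Vieta's relations for a cubic with a doubled root (the three roots summing to $0$) give the double root $t = -3q/(2p)$ and the simple root $t = 3q/p$; substituting $p, q$ and simplifying produces $u_{2} = (-\hat{k} + 9\hat{\beta})/(2\hat{k}^{2} - 6)$ and $u_{1} = (\hat{k}^{3} - 4\hat{k} + 9\hat{\beta})/(3 - \hat{k}^{2})$, which is item~3. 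All computations are elementary, and the main obstacle is purely organizational: keeping the four regimes and their sign conditions straight, and matching the scaled radicals of item~1 with Cardano's formula.
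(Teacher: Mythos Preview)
Your proposal is correct and follows essentially the same route as the paper: the Tschirnhaus shift to the depressed cubic, Cardano's formula with cube roots of unity for $\Delta_u<0$, Vi\`ete's trigonometric formula for $\Delta_u>0$, and direct identification of the repeated-root cases when $\Delta_u=0$. Your handling of the $\Delta_u=0$ case via the closed-form expressions $t=-3q/(2p)$ and $t=3q/p$ is a bit more streamlined than the paper's approach of expanding $(y-y_1)(y-y_2)^2$ and solving the resulting nonlinear system, and your explicit verification of the scaling identity $\sqrt[3]{4}/18=1/\sqrt[3]{1458}$ is a useful addition, but these are minor stylistic differences rather than a different strategy.
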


\begin{proof}

Case 1: $\Delta_{u} < 0$. We apply Tschirnhaus transformation \cite{Cardano1545}  $u = y - \frac{\hat{k}}{3}$ to \eqref{cubic} in order to get the depressed form of the cubic equation:

\begin{equation}\label{cubictrans}
y^3 + \left( \dfrac{3 - \hat{k}^2}{3}\right) y  + \left(\dfrac{2\hat{k}^3 - 9\hat{k} + 27\hat{\beta}}{27} \right) = 0.
\end{equation}
We  apply Cardano's rule to present (57) as an equation of shape $(m+n)^3 - 3mn(m+n)-(m^3+n^3)=0\;$\cite{Cardano1545}. In our case:

\begin{equation}\label{conditions}
    y = m+n \quad ,\quad -\dfrac{3 - \hat{k}^2}{9} = mn \quad , \quad-\dfrac{2\hat{k}^3 - 9\hat{k} + 27\hat{\beta}}{27} = m^3 + n^3.
\end{equation}

With  inter-medium result, we can apply some algebraic operations to obtain a six-order equation to solve for the value $n$:

$$729n^6 +\left( 54\hat{k}^3 - 243\hat{k} + 729\hat{\beta} \right)n^3 - \left(3 - \hat{k}^2\right)^3 = 0.$$

We  obtain a first set of solutions by solving  equation for $n^3$. 

$$n^3 = \dfrac{1}{1458}\left(-\left( 54\hat{k}^3 - 243\hat{k} + 729\hat{\beta} \right) \pm \sqrt{\left( 54\hat{k}^3 - 243\hat{k} + 729\hat{\beta} \right)^2 + 2916 \left(3 - \hat{k}^2\right)^3 }\right)$$

Because we want to guarantee a real solution for this first case (the complex results will be treated using the cube unity roots), we will impose a restriction for  quadratic discriminant.
$$
\begin{array}{c}
    \left( 54\hat{k}^3 - 243\hat{k} + 729\hat{\beta} \right)^2 + 2916 \left(3 - \hat{k}^2\right)^3 > 0 \\
    -19638\left(18\hat{k}\hat{\beta} + \hat{k}^2 - 4 - 4\hat{k}^3\hat{\beta} - 27 \hat{\beta}^2\right) > 0 \\
    -19638 \Delta_u >0\\
    \Delta_u < 0.
\end{array}
$$

As the real solutions for $n^3$ depends on $\Delta_u < 0$, we will set this as a restriction for the first analytical approach of the problem. The other cases will be set afterwards.

Given the solution for $n^3$, we can use the relationships set between $m$ and $n$ in \eqref{conditions} and some algebraic operations to find their values:

\begin{equation}\label{m and n}
\scriptsize
\begin{array}{cl}
m = \dfrac{\sqrt[3]{4}}{18}\sqrt[3]{ -\left( 54\hat{k}^3 - 243\hat{k} + 729\hat{\beta} \right) - \sqrt{\left( 54\hat{k}^3 - 243\hat{k} + 729\hat{\beta} \right)^2 + 2916 \left(3 - \hat{k}^2\right)^3 }} & \\
n = \dfrac{\sqrt[3]{4}}{18}\sqrt[3]{ -\left( 54\hat{k}^3 - 243\hat{k} + 729\hat{\beta} \right) + \sqrt{\left( 54\hat{k}^3 - 243\hat{k} + 729\hat{\beta} \right)^2 + 2916 \left(3 - \hat{k}^2\right)^3 }}&
\end{array}
\end{equation}

With Cardano's rule \cite{Cardano1545}, we  define  three complex solutions for  equation \eqref{cubictrans} when $\Delta_u < 0$ using \eqref{m and n} :

$$y_1 = m + n$$
$$y_2 = - \dfrac{1}{2}(m + n) + \dfrac{\sqrt{3}}{2}(m-n)i$$
$$y_3 = - \dfrac{1}{2}(m + n) - \dfrac{\sqrt{3}}{2}(m-n)i$$

Now we apply  the inverse Tschirnhaus transformation to our $y$ solutions. We  define the coefficients $A_1$ and $A_2$:

$$
A_1 = -\left( 54\hat{k}^3 - 243\hat{k} + 729\hat{\beta} \right)
$$
$$A_2 = \sqrt{\left( 54\hat{k}^3 - 243\hat{k} + 729\hat{\beta} \right)^2 + 2916 \left(3 - \hat{k}^2\right)^3 }
$$
Then, the solutions for \eqref{cubic} when $\Delta_u < 0$ are:

\begin{equation}\label{soldelta<0}
    \begin{array}{cl}
        u_1 = -\dfrac{\hat{k}}{3} + \dfrac{\sqrt[3]{4}}{18} \left( \sqrt[3]{A_1 + A_2} + \sqrt[3]{A_1 - A_2} \right)\\
        u_2 = \left[ - \dfrac{\hat{k}}{3}  - \dfrac{\sqrt[3]{4}}{36} \left( \sqrt[3]{A_1 + A_2} + \sqrt[3]{A_1 - A_2} \right) \right] + \left[ \dfrac{\sqrt{3}\sqrt[3]{4}}{36} \left( \sqrt[3]{A_1 + A_2} - \sqrt[3]{A_1 - A_2} \right) \right]i\\
        u_3 = \left[ - \dfrac{\hat{k}}{3}  - \dfrac{\sqrt[3]{4}}{36} \left( \sqrt[3]{A_1 + A_2} + \sqrt[3]{A_1 - A_2} \right) \right] + \left[ - \dfrac{\sqrt{3}\sqrt[3]{4}}{36} \left( \sqrt[3]{A_1 + A_2} - \sqrt[3]{A_1 - A_2} \right) \right]i
    \end{array}
\end{equation}
\\

\noindent\textbf{Case 2-3: $\Delta_u = 0$}\\
In this case, we have 

$$u^3 + \hat{k} \, u^2 +  u  + \hat{\beta} = (u - u_1)(u - u_2)^2$$
and corresponding depressed equation \eqref{cubictrans} as follows:

$$y^3 + \left( \dfrac{3 - \hat{k}^2}{3}\right) y  + \left(\dfrac{2\hat{k}^3 - 9\hat{k} + 27\hat{\beta}}{27} \right) = (y-y_1)(y-y_2)^2.  $$
 
As we expand this expression, we get the following non-linear equations system to solve:
$$
\begin{array}{rcl}
 -2y_2 -y_1 &=& 0 \\
 y_2^2 + 2y_1y_2 &=& \dfrac{3 - \hat{k}^2}{3} \\
-y_2^2y_1 &=& \dfrac{2\hat{k}^3 - 9\hat{k} + 27\hat{\beta}}{27} \\
\end{array}$$

After we use some simple algebraic techniques to manipulate the system, we get the following results:

$$y_1 =  \dfrac{2\hat{k}^3 - 9\hat{k} + 27\hat{\beta}}{9 - 3\hat{k}^2}$$
$$y_2  =  \dfrac{2\hat{k}^3 - 9\hat{k} + 27\hat{\beta}}{6\hat{k}^2 - 18}$$

Then we apply the inverse Tschirnhaus transformation to our $y$ solutions in order to get the \eqref{cubic} solutions:

\begin{equation}\label{soldelta=0_1}
    \begin{array}{cl}
        u_1 =\dfrac{\hat{k}^3 - 4\hat{k} + 9\hat{\beta}}{3 - \hat{k}^2}\\
        u_2 = \dfrac{- \hat{k} + 9\hat{\beta}}{2\hat{k}^2 - 6} \\
    \end{array}
\end{equation}

In the case we have a single real solution with multiplicity $3$, we have the following system:

$$u^3 + \hat{k} \, u^2 +  u  + \hat{\beta} = (u - u_1)^3$$

As we expand this expression, we get the following non-linear equations system to solve:

$$
\begin{array}{rcl}
 3u_1 &=& -\hat{k} \\
 3u_1^2 &=& 1 \\
u_1^3 &=& -\hat{\beta} \\
\end{array}$$

As there is only one variable with three equations in the system, it will result that this particular case only can happen with an specific set of values for the parameters $\hat{k}$ and $\hat{\beta}$. The solution for \eqref{cubic} is:

\begin{equation}\label{soldelta=0_2}
    \hat{\beta} = \pm \dfrac{\sqrt{3}}{9} \,\, \wedge \,\, \hat{k}=\pm \sqrt{3} \,\, \Longrightarrow \,\, u_1 = \mp \dfrac{\sqrt{3}}{3}
\end{equation}
\\

\noindent\textbf{Case 4: $\Delta_u > 0$}\\

For this case, we want to find a set of three solutions where all of them are real (imaginary part equal to $0$) and different from each other. However, Galois theory \cite{Stewart2015} shows us that this case cannot be expressed as real radicals. So in this case, we will apply a different approach.

First, we reduce to the depressed cubic equation described by \eqref{cubictrans} and then we will apply the Viète's solutions \cite{Viete1591}; these say that given a cubic equation of shape $z^3 + pz + q = 0$, we can express its three solutions as:

$$z_{t+1} = 2\sqrt{-\dfrac{p}{3}} \text{cos}\left( \dfrac{1}{3} \text{arccos}\left( \dfrac{3q}{2p} \sqrt{\dfrac{-3}{p}}\right) + \dfrac{2t\pi}{3}\right)\,\,\,\, \forall t \in \{0,1,2\}$$

Applying this definition to our problem, we get as solutions for the depressed equation:

$$y_{1} = \dfrac{2}{3}\sqrt{\hat{k}^2-3} \cos{\left( \dfrac{1}{3} \arccos{\left( \dfrac{2\hat{k}^3-9\hat{k}+27\hat{\beta}}{18 - 6\hat{k}^2} \sqrt{\dfrac{9}{\hat{k}^2-3}}\right)}\right)}$$

$$y_{2} = \dfrac{2}{3}\sqrt{\hat{k}^2-3} \cos{\left( \dfrac{1}{3} \arccos{\left( \dfrac{2\hat{k}^3-9\hat{k}+27\hat{\beta}}{18 - 6\hat{k}^2} \sqrt{\dfrac{9}{\hat{k}^2-3}}\right)+\dfrac{2\pi}{3}}\right)}$$

$$y_{3} = \dfrac{2}{3}\sqrt{\hat{k}^2-3} \cos{\left( \dfrac{1}{3} \arccos{\left( \dfrac{2\hat{k}^3-9\hat{k}+27\hat{\beta}}{18 - 6\hat{k}^2} \sqrt{\dfrac{9}{\hat{k}^2-3}}\right)+\dfrac{4\pi}{3}}\right)}$$

Then, as we want to define the solutions for \eqref{cubic}, we need to apply the inverse Tschirnhaus transformation to our $y$ solutions.

We will define the quantities $A_3$ and $A_4$ in order to show in a more readable format the results:

$$A_3 = \dfrac{2}{3}\sqrt{\hat{k}^2-3}$$
$$A_4 = \dfrac{2\hat{k}^3-9\hat{k}+27\hat{\beta}}{18 - 6\hat{k}^2} \sqrt{\dfrac{9}{\hat{k}^2-3}}$$

The final solutions for this case are:

\begin{equation}\label{soldelta>0}
    \begin{array}{cl}
        u_1 = A_3 \, \cos{\left( \dfrac{1}{3}\arccos{\left(  A_4 \right)} \right)} - \dfrac{\hat{k}}{3} & \\
        u_2 = A_3 \, \cos{\left( \dfrac{1}{3}\arccos{\left(  A_4 \right)} + \dfrac{2\pi}{3} \right)} - \dfrac{\hat{k}}{3} & \\
        u_3 = A_3 \, \cos{\left( \dfrac{1}{3}\arccos{\left(  A_4 \right)} + \dfrac{4\pi}{3}\right)} - \dfrac{\hat{k}}{3} & \\
    \end{array}
\end{equation}

With this, we have already covered all the cases related to the cubic equation. Then, we can join all this processes to give the general solution of \eqref{cubic}.

\end{proof}

\subsubsection{Effective potential $\Theta_d$}

We used the substitution $\Theta_d = u^2$ to transform  non-linear equation to a cubic equation, therefore,  we have to apply the same resource to bring the set of solutions for the variable $\Theta_d$.

However, we should be careful as we need to guarantee that beside the transformation, the solution might be valid for $u$ and $\Theta_d$. For this process, let's think about a value $\bar{u}$ which is the solution of the cubic equation for a certain paramaters $\bar{k}$ and $\bar{\beta}$. Let's think this as a function that satisfies the following:

$$p(u) = u^3 + \bar{k} \, u^2 +  u  + \bar{\beta} \Longrightarrow p(\bar{u})=0$$

We define the sign of  value $\bar{u}$. For that, we will use the definition of an even function:  $f(x)=f(-x)$.

A function $p(u)$ does not satisfies the even condition because it is a polynomial of odd degree.  In fact, we can show that the definition only satisfies for $u=0$, $u=i$ and $u=-i$.

We can infer:

$$\bar{u} \notin \{i,-i,0\} \Longrightarrow p(\bar{u})=0 \,\, \wedge \,\, p(-\bar{u})\neq0$$

We will check what happens with the results when we transform them from  $u$ to  $\Theta_d$ domain. First we must know that for a solution to be valid in our problem, we need to guarantee that when it is transformed from a domain to another (Does not matter in which order is done first), it is still a solution for both of them. As this transformation does not have a valid inverse for all values, we must restrict our solutions set:

$$\Re(u) \geq 0 \,\, \Longrightarrow \,\, \Theta_d = u^ 2$$

If $\Re(u)<0$, we can guarantee that it is a solution for \eqref{cubic}, but we cannot do it for \eqref{eqthetad} , even if we try manipulating the sign of $u$, we cannot guarantee it because of the statement proofed before. As a result, our position for this problem is to discard those solutions that do not match the condition.

Then, we will set the new condition and the transformation for the general solution given for the cubic equation. We will check each case set by the cubic discriminant.\\

\noindent\textbf{Case 1: $\Delta_u < 0$}\\

In this case where we have complex conjugated solutions, we must set a condition over the real part of these solutions and the real solution.

In the case where:

$$\dfrac{6\hat{k}}{\sqrt[3]{4}} < \sqrt[3]{A_1 + A_2} + \sqrt[3]{A_1 - A_2}$$

we can say that:

$$\begin{array}{rcl}
    \Theta_{d1} &= &\dfrac{\hat{k}^{2}}{9} - \dfrac{\sqrt[3]{4}\hat{k}}{27}\left( \sqrt[3]{A_1 + A_2} + \sqrt[3]{A_1 - A_2}\right)+ \\
    & &\dfrac{\sqrt[3]{2}}{162} \left(\sqrt[3]{(A_1 + A_2)^2} + \sqrt[3]{(A_1 - A_2)^2} -18 \sqrt[3]{4} \left(3-\hat{k}^2\right)\right)
\end{array}$$

$$$$

And in the case where:

$$-\dfrac{12\hat{k}}{\sqrt[3]{4}} > \sqrt[3]{A_1 + A_2} + \sqrt[3]{A_1 - A_2}$$

we define $t_R$ and $t_I$ as:

$$\begin{array}{rcl}
    t_R &=& \dfrac{\hat{k}^{2}}{9}+\dfrac{\sqrt[3]{4}\hat{k}}{54}\left(\sqrt[3]{A_1 + A_2} + \sqrt[3]{A_1 - A_2} \right) - \\ \\
    & &\dfrac{\sqrt[3]{2}}{324} \left( \sqrt[3]{(A_1 + A_2)^{2}} + \sqrt[3]{(A_1 - A_2)^{2}} \right) - \dfrac{2}{9} \left( 3 - \hat{k}^{2} \right) \\ \\
    t_I &=& -\dfrac{\sqrt{3}\sqrt[3]{4}\hat{k}}{54} \left( \sqrt[3]{A_1 + A_2} - \sqrt[3]{A_1 - A_2} \right) - \\ \\
    & & \dfrac{\sqrt{3} \sqrt[3]{2}}{324} \left( \sqrt[3]{(A_1 + A_2)^{2}} - \sqrt[3]{(A_1 - A_2)^{2}} \right) \\
\end{array}$$

and the solutions are:

$$\Theta_{d2} = t_R - t_I i$$
$$\Theta_{d3} = t_R + t_I i$$\\

\noindent\textbf{Case 2: $\Delta_u = 0$}\\

In this case as solutions do not have an imaginary part, we will set our restrictions over the complete solutions.

First we will see the case where one solution has multiplicity of two:

If we have that:

$$\dfrac{\hat{k}^3 - 4\hat{k} + 9\hat{\beta}}{3 - \hat{k}^2} > 0$$

then we know that:

$$\Theta_{d1} = \dfrac{\left(2\hat{k}^3 - 9\hat{k} + 27\hat{\beta} \right)^2}{81 - 54\hat{k}^2 + 9\hat{k}^4} - \dfrac{4\hat{k}^4 - 18\hat{k}^2+ 54\hat{k}\hat{\beta}}{27-9\hat{k}^2} + \dfrac{\hat{k}^2}{9}$$

If we have that:

$$\dfrac{- \hat{k} + 9\hat{\beta}}{2\hat{k}^2 - 6} > 0$$

then we know that:

$$\Theta_{d2} =  \dfrac{\left(2\hat{k}^3 - 9\hat{k} + 27\hat{\beta} \right)^2}{36\hat{k}^4-216\hat{k}^2+324} - \dfrac{2\hat{k}^4 - 9\hat{k}^2 + 27\hat{k}\hat{\beta}}{9\hat{k}^2-27} + \dfrac{\hat{k}^2}{9}$$\\

In the case where there is a single root with multiplicity of $3$, we must check if $\hat{\beta} =  - \frac{\sqrt{3}}{9}$ and $\hat{k} = - \sqrt{3}$. In that case: 

$$\Theta_{d} = \dfrac{1}{3}$$\\

\noindent\textbf{Case 3: $\Delta_u > 0$}\\

In this case as we have three real solutions, we restrict our problem with the value of $u$ as we did before:\\

If we now that:

$$\text{arccos}\left( \dfrac{\hat{k}}{2\sqrt{\hat{k}^2-3}}\right) >  \dfrac{1}{3} \text{arccos}\left( \dfrac{2\hat{k}^3-9\hat{k}+27\hat{\beta}}{18 - 6\hat{k}^2} \sqrt{\dfrac{9}{\hat{k}^2-3}}\right)$$

then the first solution will be:

$$\Theta_{d1} =\left( \dfrac{2}{3}\sqrt{\hat{k}^2-3}\,\,\,\text{cos}\left( \dfrac{1}{3} \text{arccos}\left( \dfrac{2\hat{k}^3-9\hat{k}+27\hat{\beta}}{18 - 6\hat{k}^2} \sqrt{\dfrac{9}{\hat{k}^2-3}}\right)\right) - \dfrac{\hat{k}}{3} \right)^2$$

If we now that:

$$\text{arccos}\left( \dfrac{\hat{k}}{2\sqrt{\hat{k}^2-3}}\right) >  \dfrac{1}{3} \text{arccos}\left( \dfrac{4\hat{k}^3-9\hat{k}+27\hat{\beta}}{18 - 6\hat{k}^2} \sqrt{\dfrac{9}{\hat{k}^2-3}}\right) + \dfrac{2\pi}{3}$$

then the second solution will be:

$$\Theta_{d2} = \left(\dfrac{2}{3}\sqrt{\hat{k}^2-3}\,\,\, \text{cos}\left( \dfrac{1}{3} \text{arccos}\left( \dfrac{2\hat{k}^3-9\hat{k}+27\hat{\beta}}{18 - 6\hat{k}^2} \sqrt{\dfrac{9}{\hat{k}^2-3}}\right) + \dfrac{2\pi}{3}\right) - \dfrac{\hat{k}}{3}\right)^2$$

If we now that:

$$\text{arccos}\left( \dfrac{\hat{k}}{2\sqrt{\hat{k}^2-3}}\right) >  \dfrac{1}{3} \text{arccos}\left( \dfrac{2\hat{k}^3-9\hat{k}+27\hat{\beta}}{18 - 6\hat{k}^2} \sqrt{\dfrac{9}{\hat{k}^2-3}}\right) + \dfrac{4\pi}{3}$$

then the third solution will be:

$$\Theta_{d3} =\left( \dfrac{2}{3}\sqrt{\hat{k}^2-3}\,\,\, \text{cos}\left( \dfrac{1}{3} \text{arccos}\left( \dfrac{2\hat{k}^3-9\hat{k}+27\hat{\beta}}{18 - 6\hat{k}^2} \sqrt{\dfrac{9}{\hat{k}^2-3}}\right) + \dfrac{4\pi}{3}\right) - \dfrac{\hat{k}}{3}\right)^2$$

Now with all these new restrictions over the parameters, we can define a general formula for the analytical solution of $\Theta_d$.

\begin{prop}\label{prop_theta_solutions}  We define the set of parameters:
$$\hat{k} = \dfrac{k}{8 j_x} \,\,\,\,\,\,\,\,,\,\,\,\,\,\,\, \hat{\beta} = \dfrac{4 \beta^2}{8 j_x}  \,\,\,\,\,\,\,\,,\,\,\,\,\,\,\, j_x \neq 0$$
and the discriminant $\Delta_u = 18\hat{k}\hat{\beta} + \hat{k}^2 - 4 - 4\hat{k}^3\hat{\beta} - 27 \hat{\beta}^2$ for the non-linear equation $k  \Theta_d + 8 j_x \Theta_d^{\frac{3}{2}} + 8 j_x \Theta_d^{\frac{1}{2}}  + 4\beta^2 = 0$.

Then its solutions in $\mathbb{C}$ are:

\begin{enumerate}
    \item If $\Delta_u < 0$ we define:
    \begin{equation*}
    \begin{array}{c}
            A_1 = -\left( 54\hat{k}^3 - 243\hat{k} + 729\hat{\beta} \right) \\
            A_2 = \sqrt{\left( 54\hat{k}^3 - 243\hat{k} + 729\hat{\beta} \right)^2 + 2916 \left(3 - \hat{k}^2\right)^3 } \\
    \end{array}
    \end{equation*}
    \begin{equation*}
    \scriptsize
    \begin{array}{c}
            t_R = \dfrac{\hat{k}^{2}}{9}+\dfrac{\sqrt[3]{4}\hat{k}}{54}\left(\sqrt[3]{A_1 + A_2} + \sqrt[3]{A_1 - A_2} \right) - \dfrac{\sqrt[3]{2}}{324} \left( \sqrt[3]{(A_1 + A_2)^{2}} + \sqrt[3]{(A_1 - A_2)^{2}} \right) - \dfrac{2}{9} \left( 3 - \hat{k}^{2} \right) \\
            t_I = -\dfrac{\sqrt{3}\sqrt[3]{4}\hat{k}}{54} \left( \sqrt[3]{A_1 + A_2} - \sqrt[3]{A_1 - A_2} \right) - \dfrac{\sqrt{3} \sqrt[3]{2}}{324} \left( \sqrt[3]{(A_1 + A_2)^{2}} - \sqrt[3]{(A_1 - A_2)^{2}} \right) 
    \end{array}
    \end{equation*}
            
    and the solutions are:

    \begin{itemize}
        \item If the following is satisfied:

        $$\dfrac{6\hat{k}}{\sqrt[3]{4}} < \sqrt[3]{A_1 + A_2} + \sqrt[3]{A_1 - A_2}$$
        
        then:
        
        $$\begin{array}{rcl}
            \Theta_{d1} &=& \dfrac{\hat{k}^{2}}{9} - \dfrac{\sqrt[3]{4}\hat{k}}{27}\left( \sqrt[3]{A_1 + A_2} + \sqrt[3]{A_1 - A_2}\right)\\ & &+\dfrac{\sqrt[3]{2}}{162} \left(\sqrt[3]{(A_1 + A_2)^2} + \sqrt[3]{(A_1 - A_2)^2} -18 \sqrt[3]{4} \left(3-\hat{k}^2\right)\right)
        \end{array}$$
        
        \item If the following is satisfied:
        
        $$-\dfrac{12\hat{k}}{\sqrt[3]{4}} > \sqrt[3]{A_1 + A_2} + \sqrt[3]{A_1 - A_2}$$
        
        then:
        $$\Theta_{d2} = t_R - t_I i$$
        $$\Theta_{d3} = t_R + t_I i$$
    \end{itemize}
          
    \item If $\Delta_u = 0$ , $\hat{\beta} = - \dfrac{\sqrt{3}}{9}$ and $\hat{k}=- \sqrt{3}$, the solution is:

        $$\Theta_d = \dfrac{1}{3}$$

    \item If $\Delta_u = 0$ , $\hat{\beta} \neq - \dfrac{\sqrt{3}}{9}$ and $\hat{k}\neq- \sqrt{3}$
    the solutions are:

        \begin{itemize}
        \item If the following is satisfied:

        $$\dfrac{\hat{k}^3 - 4\hat{k} + 9\hat{\beta}}{3 - \hat{k}^2} > 0$$

        then:

        $$\Theta_{d1} = \dfrac{\left(2\hat{k}^3 - 9\hat{k} + 27\hat{\beta} \right)^2}{81 - 54\hat{k}^2 + 9\hat{k}^4} - \dfrac{4\hat{k}^4 - 18\hat{k}^2+ 54\hat{k}\hat{\beta}}{27-9\hat{k}^2} + \dfrac{\hat{k}^2}{9}$$

        \item If the following is satisfied:
        
            $$\dfrac{- \hat{k} + 9\hat{\beta}}{2\hat{k}^2 - 6} > 0$$
            
            then:
            
            $$\Theta_{d2} =  \dfrac{\left(2\hat{k}^3 - 9\hat{k} + 27\hat{\beta} \right)^2}{36\hat{k}^4-216\hat{k}^2+324} - \dfrac{2\hat{k}^4 - 9\hat{k}^2 + 27\hat{k}\hat{\beta}}{9\hat{k}^2-27} + \dfrac{\hat{k}^2}{9}$$

        \end{itemize}

    \item If $\Delta_u > 0$ we define:
            $$A_3 = \dfrac{2}{3}\sqrt{\hat{k}^2-3}$$
            $$A_4 = \dfrac{2\hat{k}^3-9\hat{k}+27\hat{\beta}}{18 - 6\hat{k}^2} \sqrt{\dfrac{9}{\hat{k}^2-3}}$$
          and the solutions are:
          \begin{itemize}
          \item If the following is satisfied:
          
          $$\text{arccos}\left( \dfrac{\hat{k}}{2\sqrt{\hat{k}^2-3}}\right) <  \dfrac{1}{3} \text{arccos}\left( \dfrac{2\hat{k}^3-9\hat{k}+27\hat{\beta}}{18 - 6\hat{k}^2} \sqrt{\dfrac{9}{\hat{k}^2-3}}\right)$$
          
          then:
          $$
            \begin{array}{cl}
                \Theta_{d1} = \left(A_3 \, \cos{\left( \dfrac{1}{3}\arccos{\left(  A_4 \right)} \right)} - \dfrac{\hat{k}}{3}\right)^2 & \\
            \end{array}
          $$
          \item If the following is satisfied:
          
            $$\text{arccos}\left( \dfrac{\hat{k}}{2\sqrt{\hat{k}^2-3}}\right) <  \dfrac{1}{3} \text{arccos}\left( \dfrac{2\hat{k}^3-9\hat{k}+27\hat{\beta}}{18 - 6\hat{k}^2} \sqrt{\dfrac{9}{\hat{k}^2-3}}\right) + \dfrac{2\pi}{3}$$
          
          then:
          $$
            \begin{array}{cl}
                \Theta_{d2} = \left(A_3 \, \cos{\left( \dfrac{1}{3}\arccos{\left(  A_4 \right)} + \dfrac{2\pi}{3} \right)} - \dfrac{\hat{k}}{3}\right)^2 &
            \end{array}
          $$
          \item If the following is satisfied:
          
          $$\text{arccos}\left( \dfrac{\hat{k}}{2\sqrt{\hat{k}^2-3}}\right) <  \dfrac{1}{3} \text{arccos}\left( \dfrac{2\hat{k}^3-9\hat{k}+27\hat{\beta}}{18 - 6\hat{k}^2} \sqrt{\dfrac{9}{\hat{k}^2-3}}\right) + \dfrac{4\pi}{3}$$
          
          then:
          $$
            \begin{array}{cl}
                 \Theta_{d3} = \left(A_3 \, \cos{\left( \dfrac{1}{3}\arccos{\left(  A_4 \right)} + \dfrac{4\pi}{3}\right)} - \dfrac{\hat{k}}{3} \right)^2&
            \end{array}
          $$
          \end{itemize}
\end{enumerate}
\end{prop}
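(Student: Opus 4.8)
The plan is to obtain the claimed formulas from Proposition~\ref{u_solutions_prop} by squaring. The substitution $\Theta_d=u^2$ turns the nonlinear equation \eqref{eqthetad} into the reduced cubic \eqref{cubic}, whose complete list of complex roots $u$ is exactly what Proposition~\ref{u_solutions_prop} provides, regime by regime in the sign of $\Delta_u$. Since the powers $\Theta_d^{1/2},\Theta_d^{3/2}$ in \eqref{eqthetad} are read through the principal branch, a root $u$ of \eqref{cubic} yields an admissible solution $\Theta_d=u^2$ of \eqref{eqthetad} precisely when $u$ is the principal square root of $u^2$, i.e.\ when $\Re(u)\ge 0$; this is the restriction already justified before the statement by the parity argument ($p(u)=u^3+\hat k u^2+u+\hat\beta$ is not even, so $p(u)=0$ does not force $p(-u)=0$ unless $u\in\{0,\pm i\}$). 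Hence for each of the four discriminant regimes there are two routine tasks: (a) square the explicit root $u_i$ of Proposition~\ref{u_solutions_prop} and simplify to the closed form displayed for $\Theta_{di}$; (b) rewrite the admissibility condition $\Re(u_i)\ge 0$ as the inequality in $\hat k,\hat\beta$ accompanying $\Theta_{di}$.

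For task (a) the decisive algebraic fact is the identity $A_1^2-A_2^2=-2916\,(3-\hat k^2)^3$, immediate from the definitions of $A_1,A_2$, which gives $\sqrt[3]{A_1+A_2}\,\sqrt[3]{A_1-A_2}=-9\sqrt[3]{4}\,(3-\hat k^2)$; this is precisely the Cardano relation $mn=-\tfrac19(3-\hat k^2)$, and it is what collapses every cross term produced by squaring. When $\Delta_u<0$, writing $C=\sqrt[3]{A_1+A_2}+\sqrt[3]{A_1-A_2}$ so that $u_1=-\tfrac{\hat k}{3}+\tfrac{\sqrt[3]{4}}{18}C$, one expands $u_1^2$ and replaces $C^2$ by $\sqrt[3]{(A_1+A_2)^2}+\sqrt[3]{(A_1-A_2)^2}-18\sqrt[3]{4}(3-\hat k^2)$ to obtain exactly the displayed $\Theta_{d1}$; squaring the real and imaginary parts of the conjugate roots $u_{2,3}$ listed in Proposition~\ref{u_solutions_prop} and again using this product identity to simplify the cross terms produces the conjugate pair $t_R\mp t_I i$ with $t_R,t_I$ as in the statement. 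When $\Delta_u=0$ the roots of Proposition~\ref{u_solutions_prop} are already rational in $\hat k,\hat\beta$, so one squares them directly to obtain the two displayed fractions; when $\Delta_u>0$ each root is $u_j=A_3\cos(\tfrac13\arccos A_4+\tfrac{2(j-1)\pi}{3})-\tfrac{\hat k}{3}$, and $\Theta_{dj}=u_j^2$ is reported verbatim.

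For task (b), in the real cases $\Re(u_i)=u_i$, so admissibility is a direct inequality: for $u_1$ in the regime $\Delta_u<0$ it reads $\tfrac{6\hat k}{\sqrt[3]{4}}\le C$, and for the conjugate pair the requirement that the common real part be nonnegative reads $-\tfrac{12\hat k}{\sqrt[3]{4}}\ge C$; when $\Delta_u=0$ the conditions $\tfrac{\hat k^3-4\hat k+9\hat\beta}{3-\hat k^2}>0$ and $\tfrac{-\hat k+9\hat\beta}{2\hat k^2-6}>0$ are literally $u_1>0$ and $u_2>0$ for the two real roots; and when $\Delta_u>0$, $u_j\ge 0$ amounts to $\cos(\tfrac13\arccos A_4+\tfrac{2(j-1)\pi}{3})\ge \tfrac{\hat k}{2\sqrt{\hat k^2-3}}$, which by monotonicity of $\arccos$ rearranges to the condition recorded for $\Theta_{dj}$. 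The sub-case of $\Delta_u=0$ with a triple root forces $3u_1=-\hat k$, $3u_1^2=1$, $u_1^3=-\hat\beta$ simultaneously; this overdetermined system is consistent only for $\hat k=-\sqrt3,\ \hat\beta=-\tfrac{\sqrt3}{9}$ (the branch $\hat k=\sqrt3$ gives $u_1=-\tfrac{\sqrt3}{3}<0$, excluded by $\Re(u)\ge 0$), and then $\Theta_d=u_1^2=\tfrac13$. Collecting the four regimes yields the asserted formula.

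I expect the only genuine work to be bookkeeping: in the regime $\Delta_u<0$ one must carry the nested cube roots through the squarings of $u_1$ and of the conjugate pair without error, and every cancellation producing the clean coefficients of $\Theta_{d1}$, $t_R$, $t_I$ rests on the single identity $\sqrt[3]{A_1+A_2}\,\sqrt[3]{A_1-A_2}=-9\sqrt[3]{4}(3-\hat k^2)$ (equivalently $A_1^2-A_2^2=-2916(3-\hat k^2)^3$). Once that identity is in hand there is no conceptual obstacle, the branch-selection principle $\Re(u)\ge 0$ having already been established before the statement.
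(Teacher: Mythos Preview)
Your proposal is correct and follows the same route as the paper: the proposition is obtained by squaring the roots $u_i$ of Proposition~\ref{u_solutions_prop} and filtering via the admissibility criterion $\Re(u)\ge 0$, exactly as in the paper's derivation in \S4.2.2 preceding the statement. The one thing you make explicit that the paper leaves implicit is the Cardano product identity $\sqrt[3]{A_1+A_2}\,\sqrt[3]{A_1-A_2}=-9\sqrt[3]{4}(3-\hat k^2)$, which is indeed what controls all the cross terms in the squarings; the paper simply presents the squared formulas case by case without isolating this step.
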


\section{Bifurcation Analysis}

The present section exposes a solution on the complex plane  of the cubic algebraic equation and equation with a non-integer degree which  are the complex solutions of equation for effective potential \eqref{theta2d}, this allows complex bifurcation diagrams display. A problem because the calculation of complex roots, is the increasing number of dimensions to project in bifurcation diagrams. This is because the number of state variables is doubled because of the increase of the imaginary part of these.  In addition, bifurcation diagrams with complex fixed points of several case studies are presented, in which solutions for certain values of the bifurcation parameters are shown.

 We have a way to define the general solution of the equations and it depends on a set of parameters; we can start a bifurcation analysis in order to understand how the solutions are behaving, and we are going to see them through computationally generated graphics.
 
\subsection{Bifurcation plots over $u$}

As it was shown in \textbf{Proposition 4.1}, we can plot the different solutions over the auxiliary variable $u$ space depending on the values of parameters $\hat{k}$ and $\hat{\beta}$ involved in the cubic equation. 

For our first example we plotted the solutions for $\hat{k}=-\sqrt{3}$ and $-5 \leq \hat{\beta} \leq 5$; also, we will split the graphics of the real and the imaginary part of the solutions.

\begin{figure}[h]
    \centering
    \includegraphics[width=\textwidth]{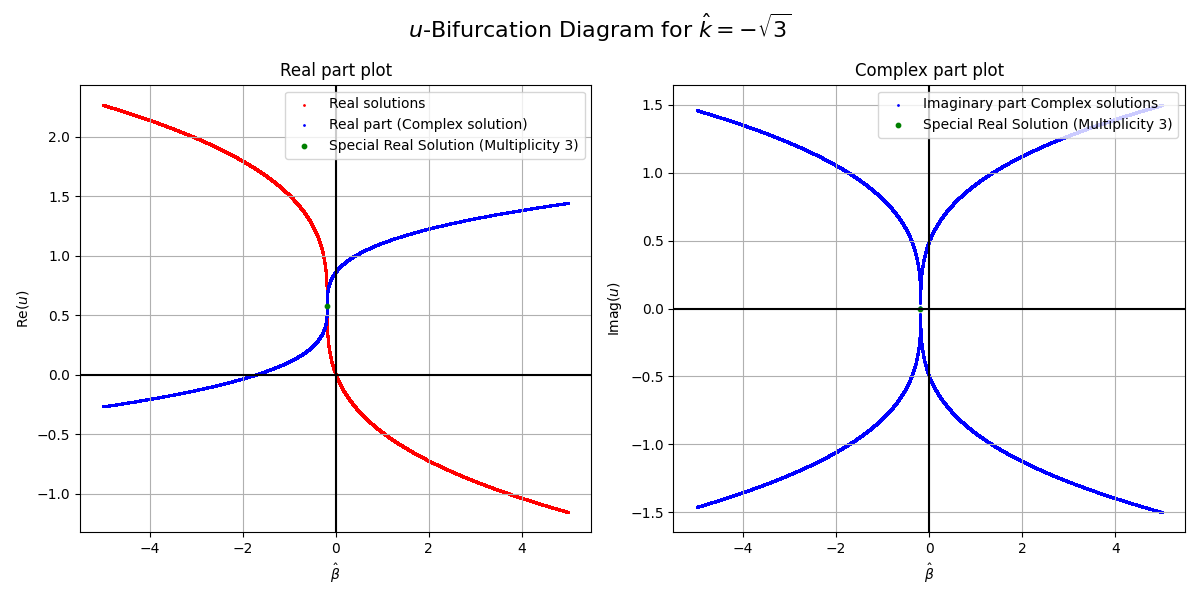}
    \caption{$u$ solutions given $\hat{k}=-\sqrt{3}$.}
    \label{bif_u_k_minus_sqrt_3}
\end{figure}

Figure \ref{bif_u_k_minus_sqrt_3} provides a detailed bifurcation diagram of the auxiliary variable \( u \) as a function of \( \hat{\beta} \), with the parameter \( \hat{k} \) fixed at \( -\sqrt{3} \). The plot is bifurcated into real (top) and imaginary (bottom) components. The real part plot reveals a single, continuous branch of real solutions (likely \( u_1 \)) and the real components of two complex-conjugate solution branches (\( u_2, u_3 \)), which form a distinct loop structure. The imaginary part plot confirms that the looping branches are purely complex outside the points of intersection with the real axis, where their imaginary parts vanish. The points where all three branches meet correspond to a discriminant \( \Delta_u = 0 \), indicating a transition in the solution multiplicity. This visualization clearly demarcates the parameter regions yielding one real versus one real and two complex solutions.

Following this idea, we plotted the solutions for $\hat{\beta}=-\frac{\sqrt{3}}{9}$ and $-5 \leq \hat{\beta} \leq 5$ with the same split done for the first scenario.

\begin{figure}[h]
    \centering
    \includegraphics[width=\textwidth]{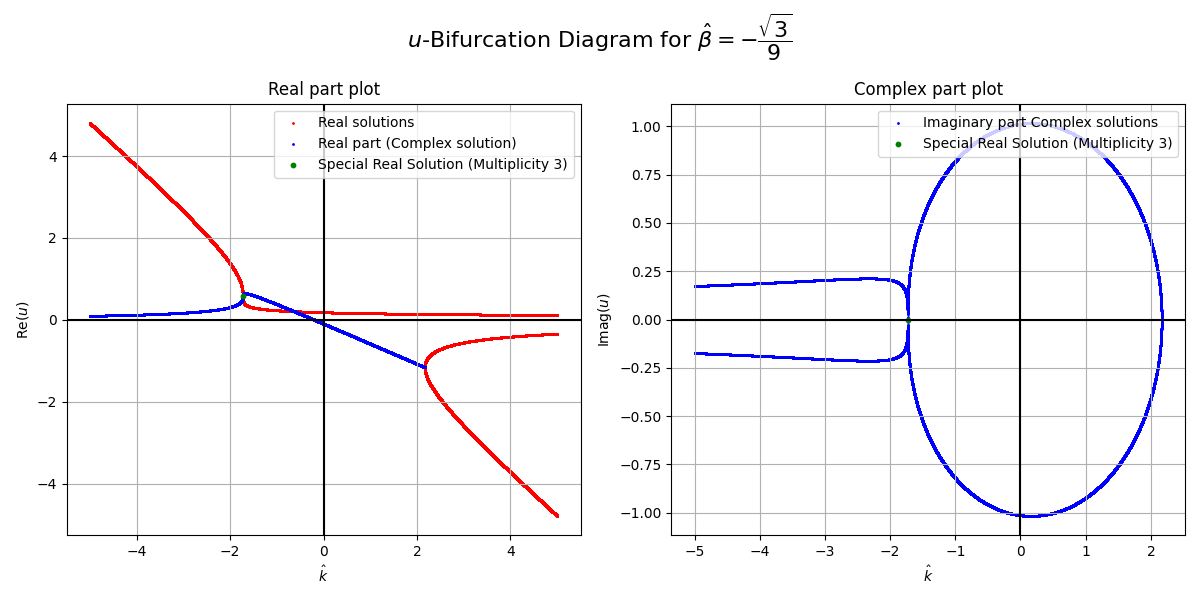}
    \caption{$u$ solutions given $\hat{\beta}=-\frac{\sqrt{3}}{9}$.}
    \label{bif_u_b_minus_sqrt_3_over_9}
\end{figure}

For the solutions described by figure \ref{bif_u_b_minus_sqrt_3_over_9}, we can see also a behavior related to the complex solutions with an imaginary part and the real solutions just where we see the multiplicity 3 real solution. It illustrates the bifurcation of \( u \) as \( \hat{k} \) varies, while \( \hat{\beta} \) is held constant at \( -\frac{\sqrt{3}}{9} \). Similar to figure \ref{bif_u_k_minus_sqrt_3}, the real and imaginary parts are displayed separately. A key feature is the presence of a single, multiplicity-three real solution at a specific \( \hat{k} \) value, manifesting as a point where all branches converge. Away from this point, the solution splits into one continuous real branch and a pair of complex-conjugate branches that form a closed loop in the parameter space. The loop's intersection with the real axis again signifies parameters for which the complex solutions become real (i.e., \( \Delta_u = 0 \)). This figure underscores how the solution structure is sensitive to variations in both parameters, not just \( \hat{\beta} \).

We can give even more views about the solutions bifurcation by fixing $\hat{k}$ and $\hat{\beta}$ to different possible values.

\begin{figure}[h]
    \centering
    \includegraphics[width=\textwidth]{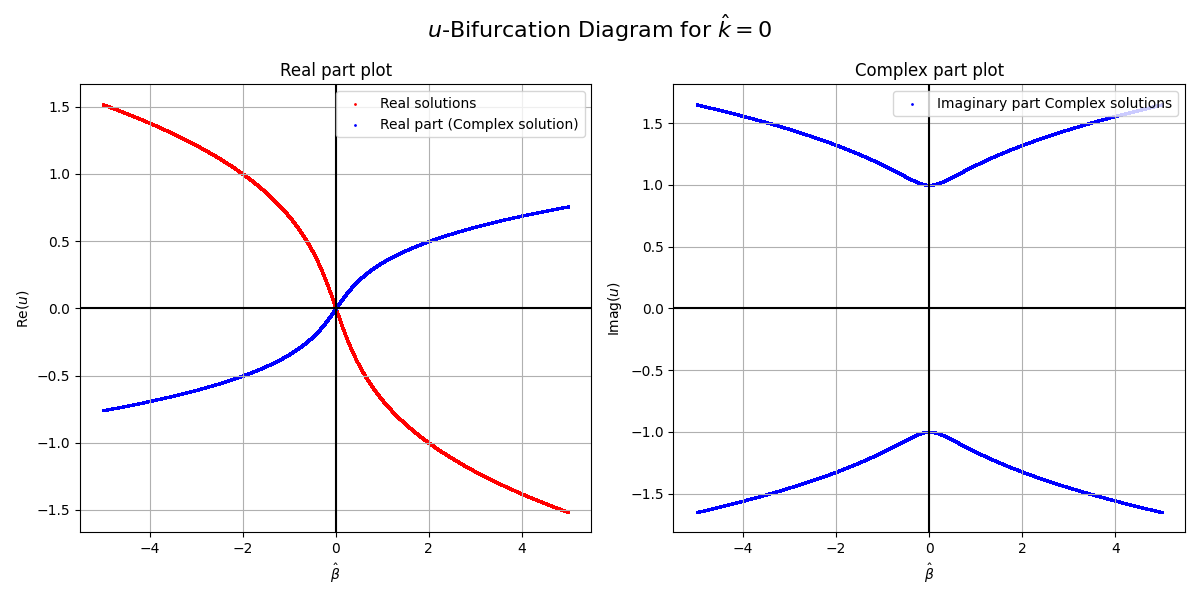}
    \caption{$u$ solutions given $\hat{k}=0$.}
    \label{bif_u_k_0}
\end{figure}

Figure \ref{bif_u_k_0} explores the solution space when \( \hat{\beta} = 0 \). Under this condition, the cubic equation simplifies, leading to a distinct bifurcation structure. The real part plot shows a primary real solution branch and the real components of the complex branches, which now exhibit a different, more open loop shape compared to Figures 1 and 2. The imaginary part plot confirms the non-zero imaginary components of the looping branches. This specific case helps isolate the influence of the \( \hat{k} \) parameter and demonstrates that the complex loop structures are a generic feature of the system's parameter space, not an artifact of a specific parameter choice.

\begin{figure}[h]
    \centering
    \includegraphics[width=\textwidth]{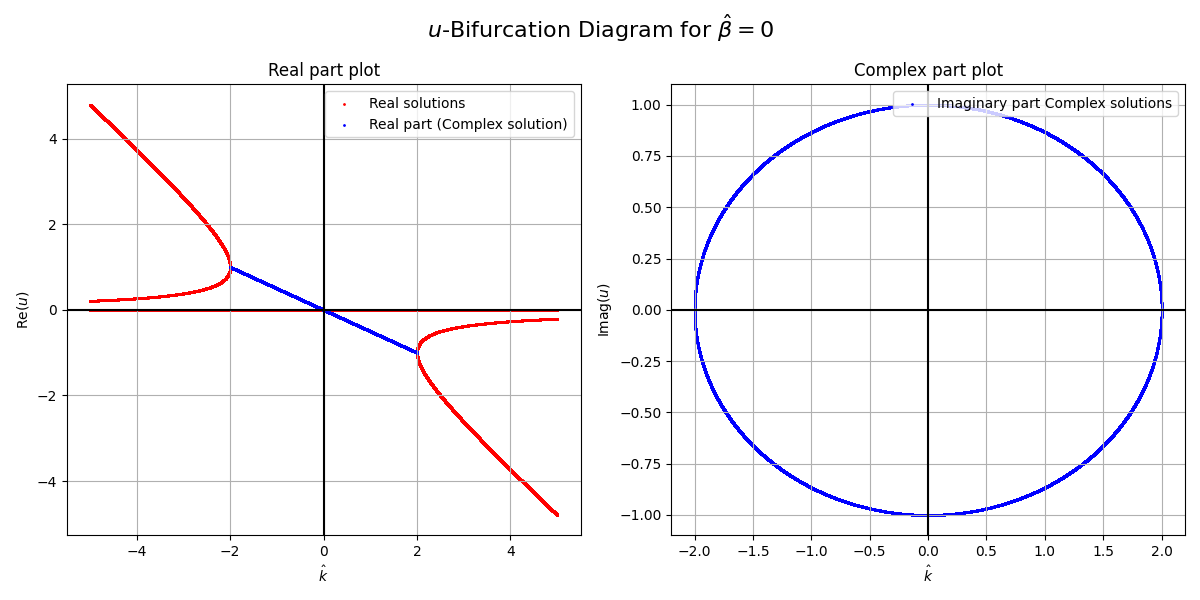}
    \caption{$u$ solutions given $\hat{\beta}=0$.}
    \label{bif_u_b_0}
\end{figure}

Figure \ref{bif_u_k_0} displays the bifurcation diagram for \( u \) with parameter \( \hat{k} \) set to zero. This configuration reveals a highly symmetric structure. The real part plot shows a single, odd-symmetric real solution passing through the origin, accompanied by the real parts of the complex branches that form a symmetric, figure-eight-like loop. The imaginary part plot is a perfect mirror image across the horizontal axis, reflecting the complex conjugate nature of the two non-real solutions. The points where the loop intersects the real axis are the bifurcation points where these complex solutions become real and distinct.

\begin{figure}[h]
    \centering
    \includegraphics[width=\textwidth]{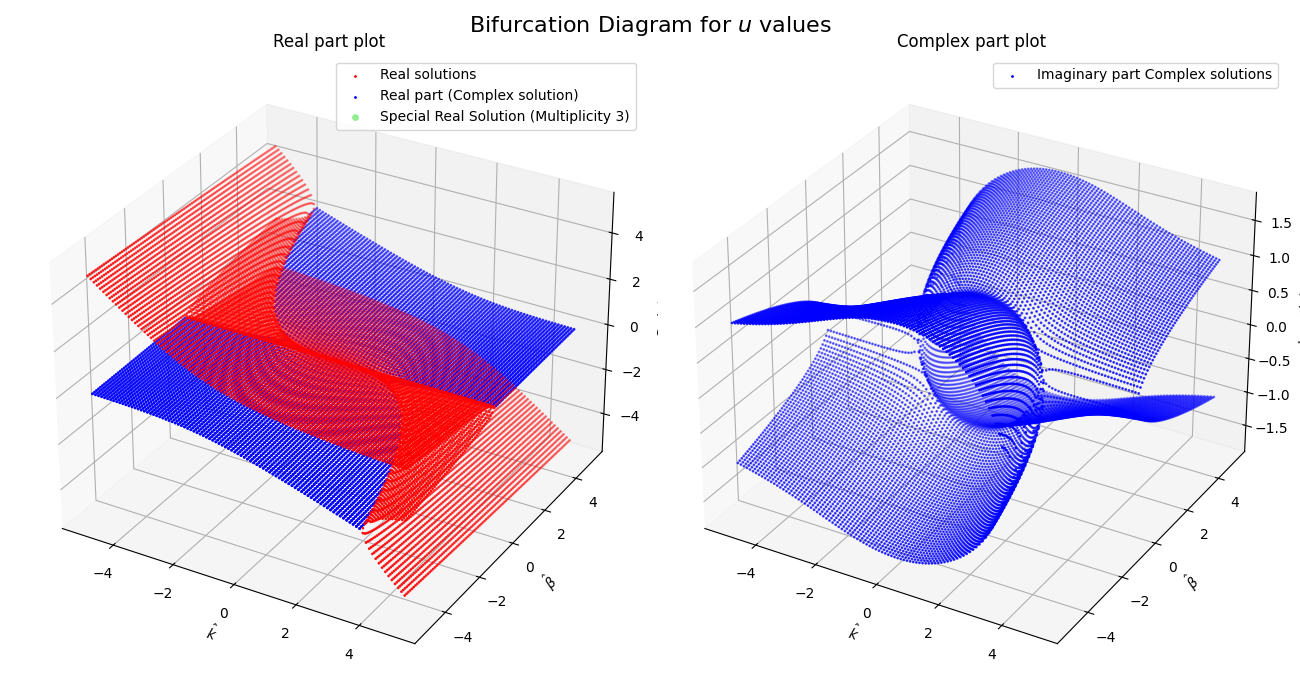}
    \caption{$u$ solutions surface.}
    \label{bif_u}
\end{figure}

In figure \ref{bif_u}, we showed the complex shape that the solutions surface acquires in the real and imaginary space. It presents a comprehensive, three-dimensional visualization of the entire \( u \)-solution surface, where the parameters \( \hat{k} \) and \( \hat{\beta} \) form the base plane, and the real part of \( u \) is plotted vertically. This global view synthesizes the behaviors observed in the previous cross-sectional diagrams. The surface exhibits folds and self-intersections, which are the geometric manifestations of bifurcations. Regions where the surface is single-valued correspond to a single real solution (\( \Delta_u < 0 \)), while regions where it folds over into three layers correspond to three distinct real solutions (\( \Delta_u > 0 \)). The edges of these folds are precisely the boundaries defined by \( \Delta_u = 0 \).

\subsection{Bifurcation plots over $\Theta$}

Following the transformation rules between $u$ and $\Theta$, we can now study the bifurcation plots in the $\Theta$ space. The main idea is to understand how the solutions obtained in the $u$ domain are projected into $\Theta$, and to explore how the restrictions applied during the transformation affect the admissible solutions.  

As in the $u$ case, we start by fixing $\hat{k}=-\sqrt{3}$ and varying $\hat{\beta}$ within the interval $[-5,5]$. This configuration lets us observe how the multiplicity-three solutions in $u$ translate into $\Theta$ and how the complex branches are mapped.  

\begin{figure}[h]
    \centering
    \includegraphics[width=\textwidth]{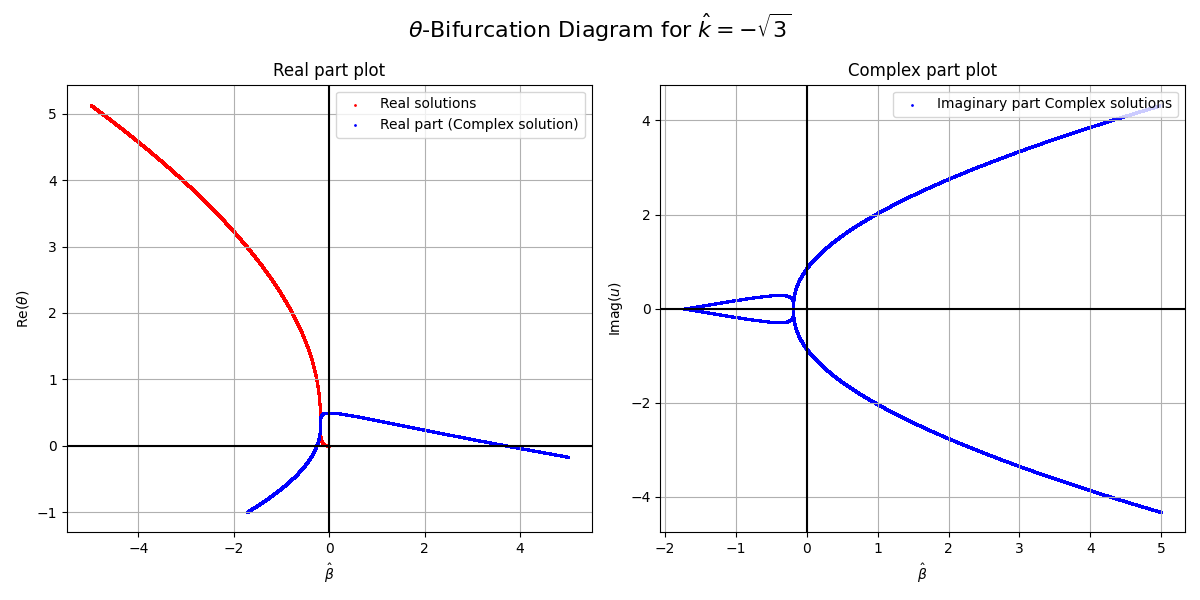}
    \caption{$\Theta$ solutions given $\hat{k}=-\sqrt{3}$.}
    \label{bif_theta_k_minus_sqrt_3}
\end{figure}

Figure \ref{bif_theta_k_minus_sqrt_3} shows the result of mapping the \( u \)-solutions from Figure \ref{bif_u_k_minus_sqrt_3} into the physical \( \Theta \)-space, with \( \hat{k} = -\sqrt{3} \). The transformation \( \Theta = u^2 \) acts as a filter, as it requires \( u \) to be real and non-negative to produce a physically meaningful \( \Theta \). Consequently, the complex branches from the \( u \)-domain are entirely discarded. The resulting bifurcation diagram in \( \Theta \) is significantly simplified, showing a single, continuous, and smooth branch. This highlights that the apparent complexity in the algebraic \( u \)-space does not necessarily translate to the physical domain, emphasizing the importance of the transformation's constraints.

\begin{figure}[h]
    \centering
    \includegraphics[width=\textwidth]{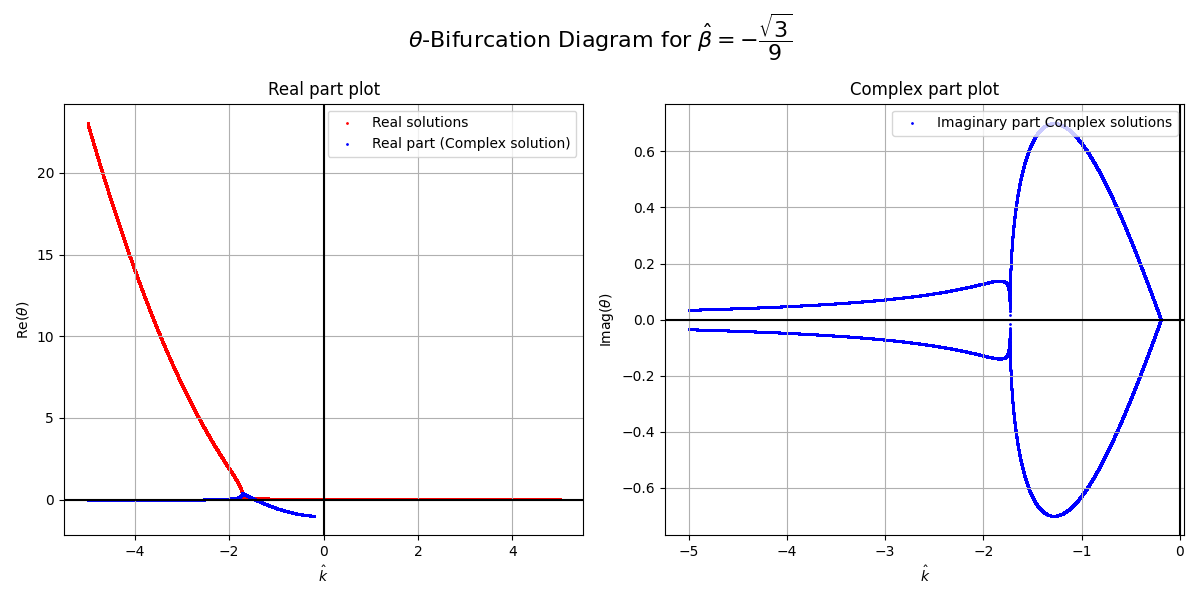}
    \caption{$\Theta$ solutions given $\hat{\beta}=-\tfrac{\sqrt{3}}{9}$.}
    \label{bif_theta_b_minus_sqrt_3_over_9}
\end{figure}

Figure \ref{bif_theta_b_minus_sqrt_3_over_9}, depicts the \( \Theta \)-solutions corresponding to the \( u \)-solutions of Figure \ref{bif_u_b_minus_sqrt_3_over_9}. The mapping process again simplifies the diagram by removing non-admissible complex solutions. However, a remnant of the loop structure from the \( u \)-domain is preserved, appearing here as a cusp or a sharp turn in the \( \Theta \) branch. This feature indicates a parameter region where the qualitative behavior of the physical system changes abruptly, corresponding to a bifurcation in the underlying effective potential. The diagram demonstrates how the insulated diode's state depends critically on the parameter \( \hat{k} \).

To complement these examples, we also consider the particular cases $\hat{k}=0$ and $\hat{\beta}=0$.

\begin{figure}[h]
    \centering
    \includegraphics[width=\textwidth]{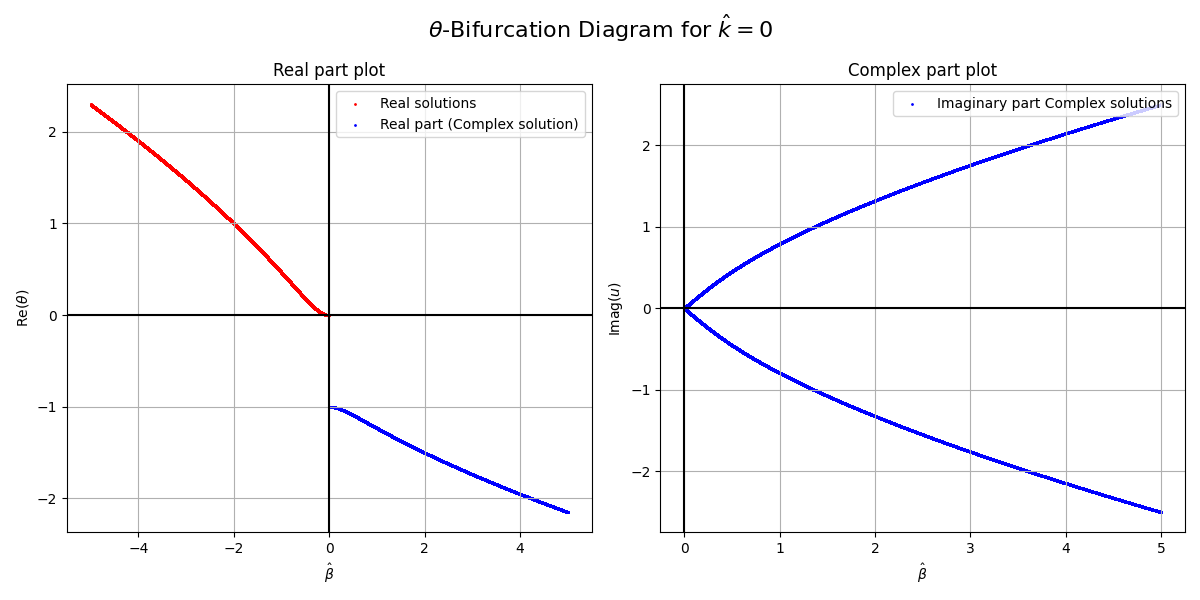}
    \caption{$\Theta$ solutions given $\hat{k}=0$.}
    \label{bif_theta_k_0}
\end{figure}

Figure \ref{bif_theta_k_0} presents the \( \Theta \)-bifurcation diagram for the case \( \hat{k}=0 \). The squaring transformation \( \Theta = u^2 \) merges the positive and negative parts of the real \( u \)-solutions from Figure \ref{bif_u_k_0}, resulting in a single, V-shaped curve. The complex loops from the \( u \)-domain have no valid mapping and are absent. This plot confirms that for this symmetric parameter choice, there is a unique, physically admissible value of the effective potential \( \Theta \) for each value of \( \hat{\beta} \), leading to a straightforward relationship devoid of bistability or bifurcations in this projection.

\begin{figure}[h]
    \centering
    \includegraphics[width=\textwidth]{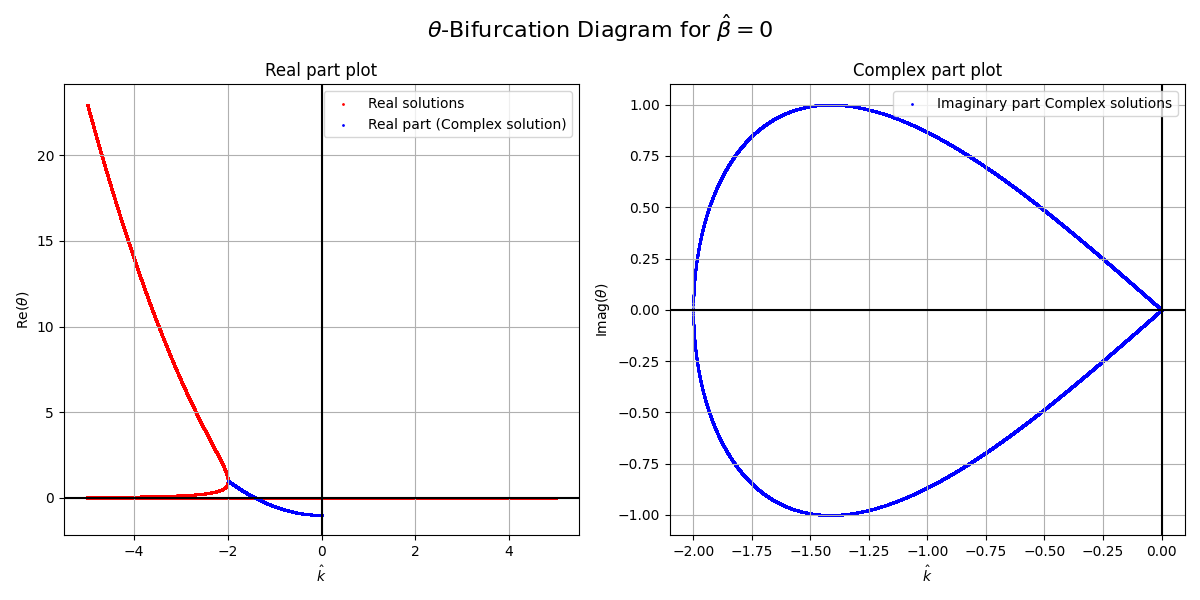}
    \caption{$\Theta$ solutions given $\hat{\beta}=0$.}
    \label{bif_theta_b_0}
\end{figure}

Figure \ref{bif_theta_b_0} shows the \( \Theta \)-solutions when \( \hat{\beta}=0 \). The resulting curve is a smooth, parabolic-like branch. The simplification from the corresponding \( u \)-plot (Figure \ref{bif_u_b_0}) is drastic; the intricate figure-eight loop has been completely filtered out by the \( \Theta = u^2 \) mapping, as it originated from complex-valued \( u \). This underscores that the physically observable states of the system are a subset of the full mathematical solution set, and the admissible solutions form a well-behaved, continuous family in this parameter cross-section.

These plots (Figures \ref{bif_theta_k_0} and \ref{bif_theta_b_0}) confirm that the $\Theta$ representation reduces the apparent complexity of the loops found in $u$ (Even disappearing in some cases) and allows us to visualize more clearly the ranges of feasible solutions. This reduction is due to the intrinsic constraints of the transformation $\Theta=\Theta(u)$, which discards non-admissible values.  

Finally, when we allow both parameters $\hat{k}$ and $\hat{\beta}$ to vary simultaneously, we obtain the global surface of solutions in $\Theta$.

\begin{figure}[h]
    \centering
    \includegraphics[width=\textwidth]{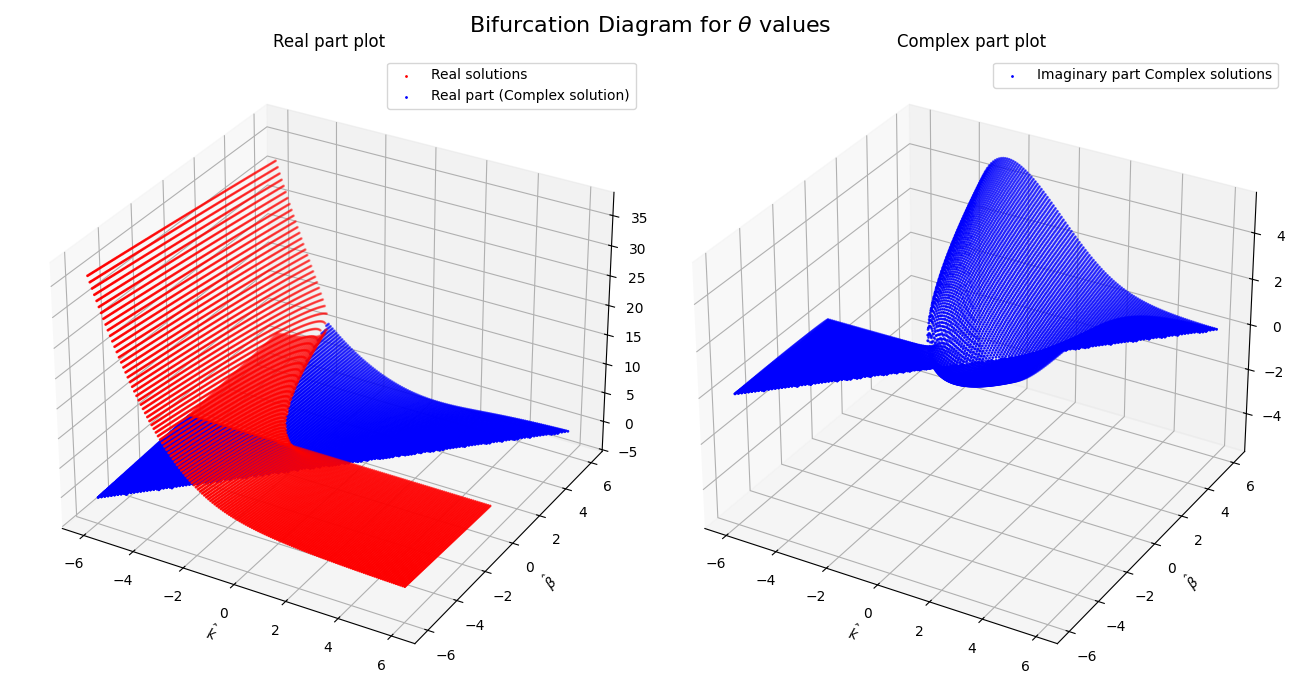}
    \caption{$\Theta$ solutions surface.}
    \label{bif_theta}
\end{figure}

Figure \ref{bif_theta} provides the global surface of all physically admissible \( \Theta \)-solutions. Compared to the complex, multi-sheeted surface in the \( u \)-domain (Figure \ref{bif_u}), this \( \Theta \)-surface is remarkably smoother and more compact. The folds and self-intersections have been largely eliminated by the mapping, which discards solutions with negative or complex \( u \). The resulting surface clearly delineates the region of parameter space \( (\hat{k}, \hat{\beta}) \) that supports a physical solution for the effective potential. The boundaries of this admissible region are of central physical importance, as they represent the critical thresholds for the onset of magnetic insulation.

\subsection{$\hat{k}$ , $\hat{\beta}$ influence over $\Theta_d$}

The admissible solutions for $\Theta_d$ are deeply linked with the relationship between the two parameters $\hat{k}$ and $\hat{\beta}$. 

We will first focus in the boundary that relates the complex solutions with imaginary part and the real solutions, which is the same as $\Delta_u = 0$
\\ \\
\begin{prop}\label{prop_bk_eq_0}
Let
\[
\Delta_u = 18 \hat{k} \hat{\beta} + \hat{k}^2 - 4 - 4 \hat{k}^3 \hat{\beta} - 27 \hat{\beta}^2.
\]
Then, the condition $\Delta_u = 0$ implies that $\hat{\beta}$ can be expressed in terms of $\hat{k}$ as
\begin{equation}
\hat{\beta} = \frac{\hat{k} \left( 9 - 2 \hat{k}^2 \pm 2 \sqrt{\hat{k}^4 - 9 \hat{k}^2 + 27/2} \right)}{27}.
\label{eq:beta_delta0}
\end{equation}
In other words, there are two branches:
\[
\hat{\beta}_+ = \frac{\hat{k} \left( 9 - 2 \hat{k}^2 + 2 \sqrt{\hat{k}^4 - 9 \hat{k}^2 + 27/2} \right)}{27}, \quad
\hat{\beta}_- = \frac{\hat{k} \left( 9 - 2 \hat{k}^2 - 2 \sqrt{\hat{k}^4 - 9 \hat{k}^2 + 27/2} \right)}{27}.
\]
\end{prop}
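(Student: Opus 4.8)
The plan is to regard the equation $\Delta_u = 0$ as a quadratic equation for the single unknown $\hat{\beta}$, treating $\hat{k}$ as a fixed parameter, and to solve it with the quadratic formula. Grouping the terms of
\[
\Delta_u = 18\hat{k}\hat{\beta} + \hat{k}^2 - 4 - 4\hat{k}^3\hat{\beta} - 27\hat{\beta}^2
\]
by powers of $\hat{\beta}$ gives $\Delta_u = -27\,\hat{\beta}^2 + (18\hat{k} - 4\hat{k}^3)\,\hat{\beta} + (\hat{k}^2 - 4)$, so that $\Delta_u = 0$ is equivalent, after multiplying through by $-1$, to
\[
27\,\hat{\beta}^2 + (4\hat{k}^3 - 18\hat{k})\,\hat{\beta} + (4 - \hat{k}^2) = 0 .
\]
I would then apply $\hat{\beta} = \dfrac{-b \pm \sqrt{b^2 - 4ac}}{2a}$ with $a = 27$, $b = 4\hat{k}^3 - 18\hat{k}$ and $c = 4 - \hat{k}^2$, observing that $-b = 2\hat{k}\,(9 - 2\hat{k}^2)$ already supplies the prefactor $\hat{k}(9 - 2\hat{k}^2)/27$ that appears in the claimed formula.

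The decisive step --- and the one I expect to require the most care --- is the simplification of the radicand $b^2 - 4ac$. One has $b^2 = 4\hat{k}^2(2\hat{k}^2 - 9)^2$ and $4ac = 108\,(4 - \hat{k}^2)$, so $b^2 - 4ac$ is a priori a degree-six polynomial in $\hat{k}$; the whole content of the proposition is that, after expansion and collection of like terms, it collapses to a perfect power of $\hat{k}^2 - 3$ (equivalently, one verifies the factorization $\hat{k}^{6} - 9\hat{k}^{4} + 27\hat{k}^{2} - 27 = (\hat{k}^{2} - 3)^{3}$). Once the radicand is in this factored form, extracting its square root, pulling the common numerical constant out from under the radical, and dividing by $2a = 54$ is purely mechanical and produces the two branches of \eqref{eq:beta_delta0}, the $+$ sign giving $\hat{\beta}_+$ and the $-$ sign giving $\hat{\beta}_-$.

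Finally I would record the range of $\hat{k}$ for which this expression is real, namely the set on which the radicand is non-negative; on the boundary of that set the two branches $\hat{\beta}_+$ and $\hat{\beta}_-$ coincide, and outside it the locus $\Delta_u = 0$ has no real points in the $(\hat{k},\hat{\beta})$-plane. The only genuine pitfall beyond routine algebra is keeping the numerical constants ($27$, the factor pulled out of the square root, and the various $2$'s) consistent throughout the quadratic formula, since a single slip there corrupts the final coefficients; the natural safeguard, which I would carry out at the end, is to substitute $\hat{\beta}_\pm$ back into $\Delta_u$ and confirm that it vanishes identically as a polynomial in $\hat{k}$.
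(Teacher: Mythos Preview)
Your approach --- treating $\Delta_u = 0$ as a quadratic in $\hat\beta$ and applying the quadratic formula --- is exactly the paper's. Your computation of the radicand is also correct: with $b = 4\hat k^3 - 18\hat k$ and $c = 4-\hat k^2$ one gets
\[
b^2 - 4ac \;=\; 16\hat k^6 - 144\hat k^4 + 432\hat k^2 - 432 \;=\; 16(\hat k^2-3)^3,
\]
a perfect cube of $\hat k^2-3$, just as you say.

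The step that fails is the last one. The square root of $16(\hat k^2-3)^3$ is $4(\hat k^2-3)^{3/2}$, so the quadratic formula actually delivers
\[
\hat\beta \;=\; \frac{\hat k(9-2\hat k^2)\;\pm\;2(\hat k^2-3)^{3/2}}{27},
\]
which is \emph{not} the displayed formula \eqref{eq:beta_delta0}: there the $\pm$ term carries an extra factor $\hat k$ and the radicand is $\hat k^4 - 9\hat k^2 + 27/2$. The paper arrives at \eqref{eq:beta_delta0} only because its own proof simplifies the discriminant to $16\hat k^2(\hat k^4 - 9\hat k^2 + 27/2) = 16\hat k^6 - 144\hat k^4 + 216\hat k^2$, which differs from the true value $16\hat k^6 - 144\hat k^4 + 432\hat k^2 - 432$ by $216\hat k^2 - 432$. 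Your own proposed safeguard --- substituting $\hat\beta_\pm$ back into $\Delta_u$ --- would have exposed this: at $\hat k = 3$ the formula \eqref{eq:beta_delta0} gives $\hat\beta_+ = (-9 + 2\sqrt{27/2})/9 \approx -0.184$, for which $\Delta_u \approx 14$, not $0$. In short, your plan and your algebra are sound; it is the target formula (and the paper's derivation of it) that contains the slip, and your factorization $16(\hat k^2-3)^3$ is the correct one.
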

\begin{proof}
We start with the quadratic in $\hat{\beta}$ obtained from $\Delta_u = 0$:
\[
-27 \hat{\beta}^2 + (18 \hat{k} - 4 \hat{k}^3) \hat{\beta} + (\hat{k}^2 - 4) = 0.
\]

Multiply through by $-1$ to simplify:
\[
27 \hat{\beta}^2 + (4 \hat{k}^3 - 18 \hat{k}) \hat{\beta} + (4 - \hat{k}^2) = 0.
\]

Applying the quadratic formula gives:
\[
\hat{\beta} = \frac{-(4 \hat{k}^3 - 18 \hat{k}) \pm \sqrt{(4 \hat{k}^3 - 18 \hat{k})^2 - 108 (4 - \hat{k}^2)}}{54}.
\]

Simplifying the numerator and discriminant:

\[
-(4 \hat{k}^3 - 18 \hat{k}) = 18 \hat{k} - 4 \hat{k}^3 = 2 \hat{k} (9 - 2 \hat{k}^2),
\]
\[
(4 \hat{k}^3 - 18 \hat{k})^2 - 108 (4 - \hat{k}^2) = 16 \hat{k}^2 (\hat{k}^4 - 9 \hat{k}^2 + 27/2).
\]

Thus, the solutions become
\[
\hat{\beta} = \frac{2 \hat{k} (9 - 2 \hat{k}^2) \pm 4 \hat{k} \sqrt{\hat{k}^4 - 9 \hat{k}^2 + 27/2}}{54} = \frac{\hat{k} \left( 9 - 2 \hat{k}^2 \pm 2 \sqrt{\hat{k}^4 - 9 \hat{k}^2 + 27/2} \right)}{27}.
\]

This gives the two branches $\hat{\beta}_+$ and $\hat{\beta}_-$, as stated in \eqref{eq:beta_delta0}
\end{proof}

This analytical result can be effectively illustrated through computational visualization. By plotting the two branches of $\hat{\beta}$ as functions of $\hat{k}$, given by equation \eqref{eq:beta_delta0}, we can observe the locus of points where $\Delta_u = 0$. 

\begin{figure}[h]
    \centering
    \includegraphics[width=\textwidth]{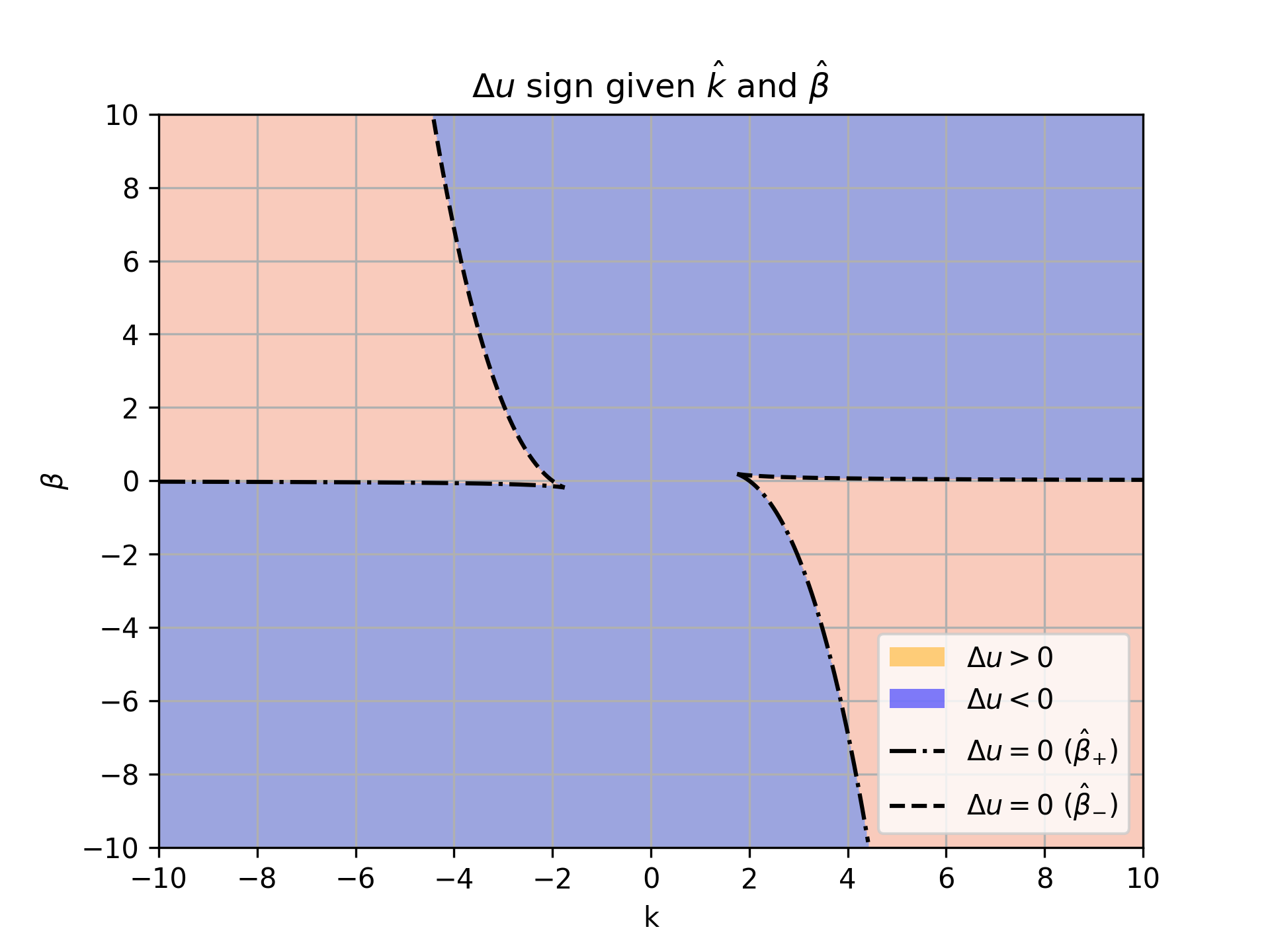}
    \caption{$\Delta_u$ sign boundaries}
    \label{deltau_signs}
\end{figure}

By plotting the two branches of $\hat{\beta}$ as functions of $\hat{k}$ given by equation (\ref{eq:beta_delta0}), we obtain the locus of critical points where the discriminant $\Delta_u = 0$. 
Figure \ref{deltau_signs} plots the analytical boundaries in the \( (\hat{k}, \hat{\beta}) \) parameter plane defined by \( \Delta_u = 0 \), as derived in Proposition 7. The two distinct branches, \( \hat{\beta}_+ \) and \( \hat{\beta}_- \), form a closed curve that separates the plane into two distinct regions. Inside this closed curve, the discriminant is positive (\( \Delta_u > 0 \)), indicating that the cubic equation admits three distinct real solutions for \( u \). Outside the curve, the discriminant is negative (\( \Delta_u < 0 \)), and there is only one real solution (and two complex conjugates). This diagram is fundamental as it maps the algebraic bifurcation set of the problem, predicting where qualitative changes in solution multiplicity occur. These curves therefore define the algebraic bifurcation set of the problem, and crossing them corresponds to a qualitative change in the multiplicity of solutions for the effective potential $\Theta(x)$.

Now we will focus on the scenario where we have the complex solutions, which is the same as $\Delta_u < 0$. In this scenario, we took into account the scenarios related to the real solution and the two conjugated solutions with imaginary parts.
\\ \\
\begin{prop}\label{prop_delta_min_zero} Let
\[
\begin{aligned}
\Delta_u &= 18\,\hat{k}\,\hat{\beta} + \hat{k}^2 - 4 - 4\,\hat{k}^3 \hat{\beta} - 27\, \hat{\beta}^2, \\
A_1 &= -\bigl(54\,\hat{k}^3 - 243\,\hat{k} + 729\,\hat{\beta}\bigr), \\
A_2 &= \sqrt{\bigl(54\,\hat{k}^3 - 243\,\hat{k} + 729\,\hat{\beta}\bigr)^2 + 2916\, (3 - \hat{k}^2)^3}.
\end{aligned}
\]

If $\Delta_u < 0 \qquad \text{and} \qquad \frac{6 \hat{k}}{\sqrt[3]{4}} < \sqrt[3]{A_1 + A_2} + \sqrt[3]{A_1 - A_2}$

then it follows that 
$\hat{\beta} < 0.$
\end{prop}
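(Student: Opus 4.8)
The plan is to recognize that the radical inequality in the hypothesis is, after a harmless positive rescaling, precisely the statement that the unique real root of the cubic \eqref{cubic} is positive, and then to read off the sign of $\hat\beta$ from the factorization of that cubic.

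First I would rewrite the hypothesis in transparent terms. Since $\Delta_u<0$, we are in Case~1 of Proposition~\ref{u_solutions_prop}: $A_2$ is real, the cube roots occurring below are real, the cubic $u^3+\hat k\,u^2+u+\hat\beta=0$ has exactly one real solution, and that solution is
$$
u_1=-\frac{\hat k}{3}+\frac{\sqrt[3]{4}}{18}\Bigl(\sqrt[3]{A_1+A_2}+\sqrt[3]{A_1-A_2}\Bigr).
$$
Multiplying the given inequality $\dfrac{6\hat k}{\sqrt[3]{4}}<\sqrt[3]{A_1+A_2}+\sqrt[3]{A_1-A_2}$ by the positive number $\dfrac{\sqrt[3]{4}}{18}$ and then subtracting $\dfrac{\hat k}{3}$ converts it into $u_1>0$. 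Hence the two hypotheses together assert exactly that $\Delta_u<0$ and $u_1>0$.

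Next I would exploit the factorization forced by $\Delta_u<0$. The remaining two roots form a genuine complex-conjugate pair (again Case~1 of Proposition~\ref{u_solutions_prop}, using $A_2>0$), so the real cubic factors as
$$
p(u):=u^3+\hat k\,u^2+u+\hat\beta=(u-u_1)\,\bigl(u^2+bu+c\bigr),
$$
where the quadratic factor has negative discriminant and positive leading coefficient, hence $u^2+bu+c>0$ for all real $u$. Therefore $p(u)$ and $u-u_1$ have the same sign on $\mathbb{R}$. Evaluating at $u=0$ gives $p(0)=\hat\beta$, while $0-u_1<0$ because $u_1>0$; thus $\hat\beta=p(0)<0$, which is the claim.

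The only step that is not purely mechanical is the first one: seeing through the opaque radical expression and identifying the hypothesis with the statement that the Cardano real root $u_1$ is positive. Once that bridge is built, no case distinction on the sign of $\hat k$ or on the position of $\hat k^2$ relative to $3$ is required — the conclusion is immediate from $p(0)=\hat\beta$ together with the elementary sign structure of a real cubic possessing a single real zero.
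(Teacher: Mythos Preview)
Your argument is correct and takes a genuinely different route from the paper's own proof. The paper does not recognize the hypothesis as the statement $u_1>0$; instead it introduces the auxiliary quantity $S=\sqrt[3]{A_1+A_2}+\sqrt[3]{A_1-A_2}$, observes that $S$ is the unique real root of the depressed cubic $S^3+27\sqrt[3]{4}\,(3-\hat k^2)S+54(2\hat k^3-9\hat k+27\hat\beta)=0$, plugs in the candidate $S_0=6\hat k/\sqrt[3]{4}$, and after a term-by-term cancellation finds $f(S_0)=1458\,\hat\beta$; the sign structure of a cubic with a single real zero then gives $S_0<S\Leftrightarrow f(S_0)<0\Leftrightarrow\hat\beta<0$. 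Your approach short-circuits that computation: by multiplying the hypothesis by $\sqrt[3]{4}/18$ you read off $u_1>0$ directly from the Cardano formula already established in Proposition~\ref{u_solutions_prop}, and then the factorization $p(u)=(u-u_1)(u^2+bu+c)$ with a strictly positive quadratic factor yields $\hat\beta=p(0)<0$ without any calculation. The paper's approach is more mechanical and generalizes easily to the companion Proposition~\ref{prop_delta_min_zero_k} (same substitution trick with $S_0=-12\hat k/\sqrt[3]{4}$), whereas your approach is conceptually cleaner here and also avoids the paper's slightly loose justification that the cubic in $S$ is ``monotonic'' (a single real root suffices for the sign argument, but monotonicity is not actually guaranteed).
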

\begin{proof} We begin by defining the expression:
\[
S = \sqrt[3]{A_1 + A_2} + \sqrt[3]{A_1 - A_2},
\]
where $A_1$ and $A_2$ are functions of $\hat{k}$ and $\hat{\beta}$. From algebra, more specifically using Cardano's rule \cite{Cardano1545}, we know that $S$ can be written as the real root of a cubic equation of the form:
\[
S^3 + p S + q = 0.
\]

In our scenario, the cubic equation associated with $S$ is:
\begin{equation}
S^3 + 27 \sqrt[3]{4}\,(3 - \hat{k}^2)\,S + 54\,(2 \hat{k}^3 - 9 \hat{k} + 27 \hat{\beta}) = 0,
\label{eq:cubic_S}
\end{equation}
where $S$ is its unique real root when $\Delta_u < 0$.

Next, we introduce the candidate value:
\[
S_0 = \frac{6 \hat{k}}{\sqrt[3]{4}}.
\]

Since $\Delta_u < 0$, the cubic \eqref{eq:cubic_S} has a single real root and is monotonic along the real axis. This means that comparing $S_0$ with $S$ is equivalent to evaluating the sign of the cubic function at $S_0$:
\[
f(S_0) := S_0^3 + 27 \sqrt[3]{4}\,(3 - \hat{k}^2)\,S_0 + 54\,(2 \hat{k}^3 - 9 \hat{k} + 27 \hat{\beta}).
\]

Substituting $S_0$ into the cubic gives:
\[
\begin{aligned}
S_0^3 &= \left(\frac{6 \hat{k}}{\sqrt[3]{4}}\right)^3 = 54 \hat{k}^3,\\
27 \sqrt[3]{4}\,(3 - \hat{k}^2)\,S_0 &= 162 \hat{k}(3 - \hat{k}^2) = 486 \hat{k} - 162 \hat{k}^3,\\
54 (2 \hat{k}^3 - 9 \hat{k} + 27 \hat{\beta}) &= 108 \hat{k}^3 - 486 \hat{k} + 1458 \hat{\beta}.
\end{aligned}
\]

Adding these three terms, all contributions depending on $\hat{k}$ cancel, leaving:
\[
f(S_0) = 1458 \hat{\beta}.
\]

Because the cubic is monotonic, the condition $S_0 < S$ is equivalent to $f(S_0) < 0$, which immediately implies:
\[
1458 \hat{\beta} < 0 \quad \Longrightarrow \quad \hat{\beta} < 0.
\]

Hence, under the hypotheses $\Delta_u < 0$ and $S_0 < S$, we conclude that $\hat{\beta} < 0$.
\end{proof}

\begin{prop}\label{prop_delta_min_zero_k} Let
\[
\begin{aligned}
\Delta_u &= 18\,\hat{k}\,\hat{\beta} + \hat{k}^2 - 4 - 4\,\hat{k}^3 \hat{\beta} - 27\, \hat{\beta}^2, \\
A_1 &= -\left( 54\,\hat{k}^3 - 243\,\hat{k} + 729\,\hat{\beta} \right), \\
A_2 &= \sqrt{\left( 54\,\hat{k}^3 - 243\,\hat{k} + 729\,\hat{\beta} \right)^2 + 2916\, (3 - \hat{k}^2)^3}.
\end{aligned}
\]

Then, if $\Delta_u < 0$ and
\[
-\frac{12 \hat{k}}{\sqrt[3]{4}} > \sqrt[3]{A_1 + A_2} + \sqrt[3]{A_1 - A_2},
\]
we can conclude that $\hat{\beta} > \hat{k}.$
\end{prop}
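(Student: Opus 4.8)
The plan is to follow verbatim the strategy of the proof of Proposition~\ref{prop_delta_min_zero}, only replacing the test point $\frac{6\hat{k}}{\sqrt[3]{4}}$ by $S_1 := -\frac{12\hat{k}}{\sqrt[3]{4}}$. Put
$$
S := \sqrt[3]{A_1 + A_2} + \sqrt[3]{A_1 - A_2}.
$$
Since $\Delta_u < 0$, the quantity under the square root defining $A_2$ is non-negative (this is exactly the inequality extracted in Case~1 of the proof of Proposition~\ref{u_solutions_prop}), so $A_1 \pm A_2$ are real and $S$ is real. By the Cardano identities, argued precisely as for Proposition~\ref{prop_delta_min_zero}, $S$ is a real root of the depressed cubic
$$
f(t) := t^3 + 27\sqrt[3]{4}\,(3 - \hat{k}^2)\,t + 54\,(2\hat{k}^3 - 9\hat{k} + 27\hat{\beta}),
$$
and when $\Delta_u < 0$ this cubic has a single real root. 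Consequently, $f$ being monic, $f(t) > 0$ holds if and only if $t > S$.

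Next I would evaluate $f$ at $S_1 = -\frac{12\hat{k}}{\sqrt[3]{4}}$. A direct substitution gives
$$
S_1^3 = -432\hat{k}^3, \qquad 27\sqrt[3]{4}\,(3-\hat{k}^2)\,S_1 = 324\hat{k}^3 - 972\hat{k}, \qquad 54(2\hat{k}^3 - 9\hat{k} + 27\hat{\beta}) = 108\hat{k}^3 - 486\hat{k} + 1458\hat{\beta}.
$$
The coefficients of $\hat{k}^3$ cancel, since $-432 + 324 + 108 = 0$, and the coefficients of $\hat{k}$ sum to $-1458$, so that
$$
f(S_1) = 1458\,(\hat{\beta} - \hat{k}).
$$

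Finally, the hypothesis $-\frac{12\hat{k}}{\sqrt[3]{4}} > \sqrt[3]{A_1+A_2} + \sqrt[3]{A_1-A_2}$ says $S_1 > S$, which by the sign behaviour of $f$ recorded above forces $f(S_1) > 0$, i.e. $1458(\hat{\beta} - \hat{k}) > 0$, hence $\hat{\beta} > \hat{k}$, as claimed.

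The point that needs genuine care, rather than being routine, is the justification of the implication ``$f(t) > 0 \iff t > S$''. This does not follow from monotonicity of $f$, because $3 - \hat{k}^2$ may be negative and then $f$ has a local maximum and minimum; it follows only from the fact that, under $\Delta_u < 0$, $f$ has exactly one simple real zero, and a monic cubic with a single simple real zero is negative to the left of that zero and positive to the right. One should also verify the arithmetic cancellation of the $\hat{k}^3$-terms in $f(S_1)$, which is the algebraic coincidence that makes the particular choice $S_1 = -12\hat{k}/\sqrt[3]{4}$ work and produces exactly the combination $\hat{\beta} - \hat{k}$.
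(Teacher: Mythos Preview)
Your proof is correct and follows essentially the same route as the paper: introduce $S$, identify it as the unique real root (under $\Delta_u<0$) of the auxiliary depressed cubic $f$, evaluate $f$ at the test point $-12\hat{k}/\sqrt[3]{4}$, observe the $\hat{k}^3$-terms cancel to leave $f(S_1)=1458(\hat{\beta}-\hat{k})$, and conclude. Your closing remark is in fact a refinement of the paper's argument: the paper asserts that the cubic is ``monotonic along the real axis'', which is only true when $\hat{k}^2\le 3$; your observation that what is really needed is the sign behaviour of a monic cubic with a \emph{single simple real zero} is the correct justification and covers all cases.
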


\begin{proof} We define the same expression as before:
\[
S = \sqrt[3]{A_1 + A_2} + \sqrt[3]{A_1 - A_2},
\]
with
\[
\begin{aligned}
A_1 &= -\left( 54 \hat{k}^3 - 243 \hat{k} + 729 \hat{\beta} \right), \\
A_2 &= \sqrt{(54 \hat{k}^3 - 243 \hat{k} + 729 \hat{\beta})^2 + 2916 (3 - \hat{k}^2)^3}.
\end{aligned}
\]

Using proposition \ref{prop_delta_min_zero} strategy, we define $S$ is the unique real root of the cubic \eqref{eq:cubic_S}.

We now consider the candidate value:
\[
S_0 = -\frac{12 \hat{k}}{\sqrt[3]{4}}.
\]

Since $\Delta_u < 0$, the cubic \eqref{eq:cubic_S} is monotonic along the real axis. Therefore, the inequality $S_0 > S$ is equivalent to evaluating the cubic at $S_0$:
\[
f(S_0) := S_0^3 + 27 \sqrt[3]{4}\,(3 - \hat{k}^2)\,S_0 + 54\,(2 \hat{k}^3 - 9 \hat{k} + 27 \hat{\beta}).
\]

Substituting $S_0$ into the cubic gives:
\[
\begin{aligned}
S_0^3 &= \left(-\frac{12 \hat{k}}{\sqrt[3]{4}}\right)^3 = -432 \hat{k}^3,\\
27 \sqrt[3]{4}\,(3 - \hat{k}^2)\,S_0 &= 27 \sqrt[3]{4}\,(3 - \hat{k}^2)\, \left(-\frac{12 \hat{k}}{\sqrt[3]{4}}\right) = -324 \hat{k} (3 - \hat{k}^2) = -972 \hat{k} + 324 \hat{k}^3,\\
54 (2 \hat{k}^3 - 9 \hat{k} + 27 \hat{\beta}) &= 108 \hat{k}^3 - 486 \hat{k} + 1458 \hat{\beta}.
\end{aligned}
\]

Adding these three contributions:
\[
f(S_0) = (-432 + 324 + 108) \hat{k}^3 + (-972 - 486) \hat{k} + 1458 \hat{\beta} = 0 - 1458 \hat{k} + 1458 \hat{\beta}.
\]

We see that now there is a \textbf{combined contribution of $\hat{k}$ and $\hat{\beta}$}. For the inequality $S_0 > S$ to hold, we require $f(S_0) > 0$, which gives the relation:
\[
1458 (\hat{\beta} - \hat{k}) > 0 \quad \Longrightarrow \quad \hat{\beta} > \hat{k}.
\]

Thus, under the hypotheses $\Delta_u < 0$ and $S_0 > S$, we conclude that $\hat{\beta} > \hat{k}$.
\end{proof}

These two particular new sets of rules defined in proposition \ref{prop_delta_min_zero} and \ref{prop_delta_min_zero_k} lead to new boundary definitions that can be verified via computer simulation.

\begin{figure}[h]
    \centering
    \includegraphics[width=\textwidth]{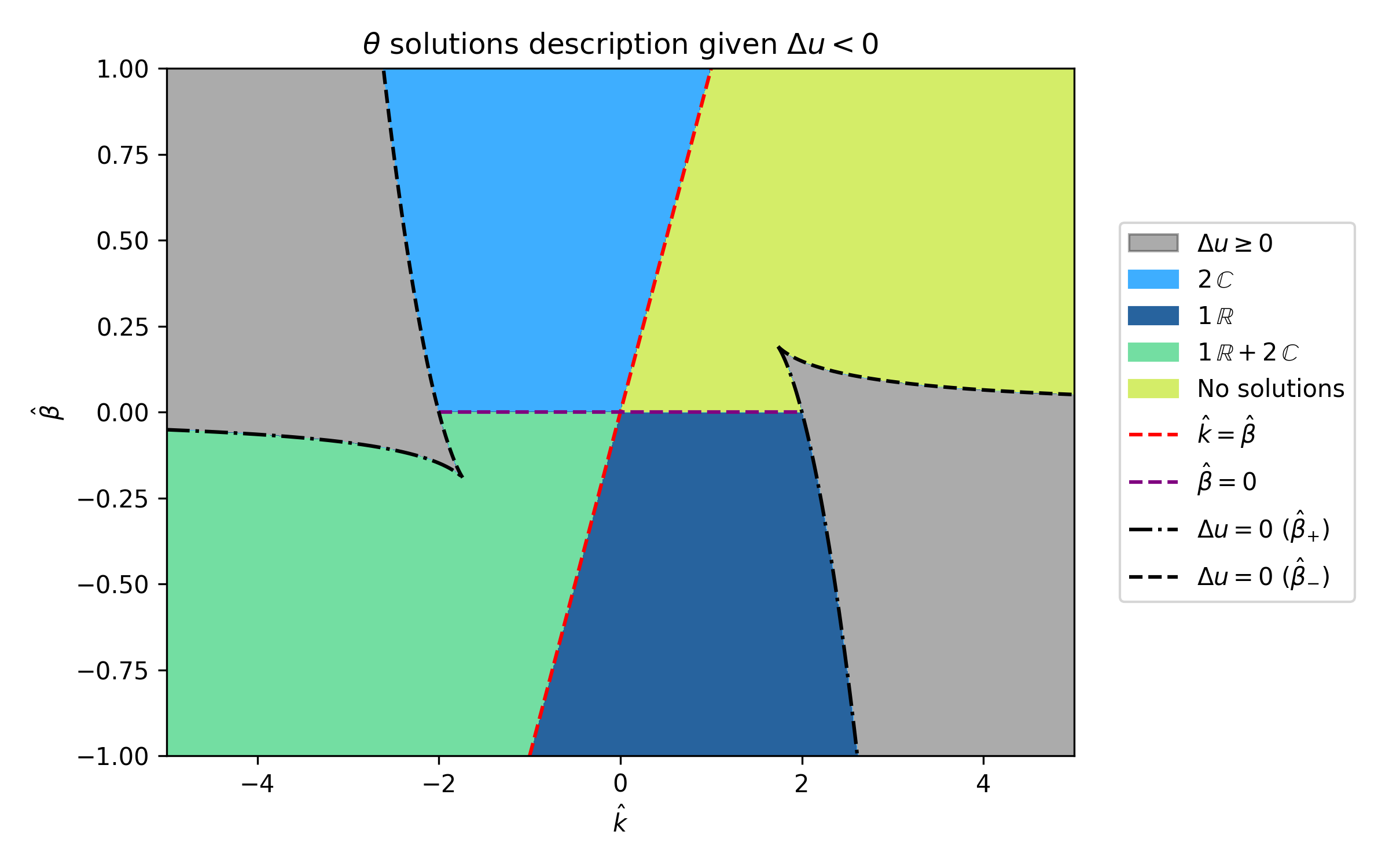}
    \caption{$\Theta_d$ solutions given $\Delta_u < 0$ }
    \label{deltau_signs_neg}
\end{figure}

Figure \ref{deltau_signs_neg} refines the algebraic picture of Figure \ref{deltau_signs} by incorporating the physical constraint \( \Theta \geq 0 \). It shows the distribution of the \textit{physically admissible} \( \Theta_d \) solutions in the parameter plane for the case \( \Delta_u < 0 \). While the algebraic discriminant indicates one real \( u \)-solution in this region, this solution must also yield a non-negative \( \Theta_d \) to be physically valid. The plot reveals that this additional constraint further trims the solution set, carving out specific sub-regions (likely shown with distinct colors or patterns) where a physically insulated diode regime exists. This highlights that the operational regime of the diode is determined by a combination of algebraic conditions and physical realizability.

This highlights an important feature of the system: bifurcations are not purely algebraic but are also constrained by the physical requirement that the effective potential remain nonnegative. 
The admissible regions in the diagram reveal how parameter variations in $j_x$ and $\beta$ determine the transition of the diode between non-insulated and insulated regimes.

In order to finish this section, we will study the parameters' impact over the solutions in the area where $\Delta_u > 0$. For this, we analyzed the three scenarios of real solutions.

\begin{prop}\label{prop_delta_over_zero} Let
\[
\begin{aligned}
\Delta_u &= 18\,\hat{k}\,\hat{\beta} + \hat{k}^2 - 4 - 4\,\hat{k}^3 \hat{\beta} - 27\, \hat{\beta}^2, \\
U &= \dfrac{\hat{k}}{2 \sqrt{\hat{k}^2 -3}} \\
V &= \dfrac{2\hat{k}^3 - 9\hat{k} + 27 \hat{\beta} }{18-6\hat{k}^2 } \sqrt{\dfrac{9}{\hat{k}^2 -3}}
\end{aligned} 
\]

If $\Delta_u > 0 \quad \text{and} \quad \arccos(U) =   \dfrac{1}{3}\arccos(V) $

then it follows $\hat{\beta} = 0$ with $\hat{k}^2 \geq 4$
\end{prop}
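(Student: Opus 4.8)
The plan is to convert the transcendental hypothesis $\arccos(U) = \tfrac{1}{3}\arccos(V)$ into a polynomial identity by means of the triple–angle formula for cosine, and then to read off both conclusions almost immediately.

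First I would record what the hypothesis presupposes. Since $\Delta_u > 0$ forces the depressed cubic \eqref{cubictrans} to have three distinct real roots, its linear coefficient $\tfrac{3-\hat{k}^2}{3}$ must be negative, i.e. $\hat{k}^2 > 3$; hence $\sqrt{\hat{k}^2-3}$ is real and positive and the quantities $U,V$ are well defined. Moreover, for $\arccos(U)$ and $\arccos(V)$ to be real we need $U,V \in [-1,1]$. The condition $|U|\le 1$ reads $|\hat{k}| \le 2\sqrt{\hat{k}^2-3}$, which upon squaring becomes $\hat{k}^2 \le 4(\hat{k}^2-3)$, that is $\hat{k}^2 \ge 4$. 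This already yields the stated side constraint, so it remains only to prove $\hat{\beta}=0$.

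Now set $\phi := \arccos(U) \in [0,\pi]$. The hypothesis says $3\phi = \arccos(V)$, and taking the cosine of both sides together with $\cos 3\phi = 4\cos^3\phi - 3\cos\phi$ gives the algebraic identity
$$
V = 4U^3 - 3U.
$$
A direct substitution of $U = \dfrac{\hat{k}}{2\sqrt{\hat{k}^2-3}}$ yields
$$
4U^3 - 3U = \frac{\hat{k}^3 - 3\hat{k}(\hat{k}^2-3)}{2(\hat{k}^2-3)^{3/2}} = \frac{9\hat{k} - 2\hat{k}^3}{2(\hat{k}^2-3)^{3/2}},
$$
while rewriting $V$ over the common denominator $2(\hat{k}^2-3)^{3/2}$, using $18-6\hat{k}^2 = -6(\hat{k}^2-3)$ and $\sqrt{9/(\hat{k}^2-3)} = 3/\sqrt{\hat{k}^2-3}$, gives
$$
V = \frac{9\hat{k} - 2\hat{k}^3 - 27\hat{\beta}}{2(\hat{k}^2-3)^{3/2}}.
$$
Since $\hat{k}^2 > 3$ the denominators are nonzero, so equating numerators cancels every term in $\hat{k}$ and leaves $-27\hat{\beta} = 0$, i.e. $\hat{\beta} = 0$, completing the argument.

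The proof involves no analytic machinery; the only points needing care — the main obstacle, modest as it is — are the branch bookkeeping, namely that the same positive square root $\sqrt{\hat{k}^2-3}$ is used consistently in $U$ and in $V$, and the justification that $3\phi = \arccos(V)$ indeed implies $\cos 3\phi = V$, which holds because $\arccos(V)\in[0,\pi]$ and $3\phi$ is, by hypothesis, equal to it. One may optionally note that, once $\hat{\beta}=0$ is known, $\Delta_u = \hat{k}^2-4$, so $\Delta_u>0$ even sharpens the side condition to $\hat{k}^2>4$, consistent with what was derived.
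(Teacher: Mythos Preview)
Your proof is correct and follows essentially the same route as the paper: both apply the triple-angle identity $\cos 3\phi = 4\cos^3\phi - 3\cos\phi$ to convert the hypothesis into $4U^3 - 3U = V$, substitute the explicit forms of $U$ and $V$ over the common denominator $2(\hat{k}^2-3)^{3/2}$, and watch all $\hat{k}$-terms cancel to leave $\hat{\beta}=0$; and both extract $\hat{k}^2\ge 4$ from the domain constraint $|U|\le 1$. Your version merely reorders the two steps, supplies the extra justification that $\Delta_u>0$ forces $\hat{k}^2>3$ so that $U,V$ are defined, and notes the sharpening to $\hat{k}^2>4$ at the end---all consistent with the paper.
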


\begin{proof}
 We begin with expression $\arccos(U) = \dfrac{1}{3}\arccos(V)$. We rewrite it in order to use the triple angle identities and define a relationship between $U$ and $V$ without trigonometric functions.

\[
\begin{aligned}
    \arccos(U) &=   \dfrac{1}{3}\arccos(V)  \\
    3 \arccos(U) &=  \arccos(V)  \\
    \cos(3\arccos(U)) &= V \\
    4 \cos^3(\arccos(U)) - 3 \cos(\arccos(U)) &= V \\
    4U^3 - 3U &= V
\end{aligned} 
\]

Using this mid result, we will use algebraic manipulation in order to rewrite the expression for $\hat{\beta}$ and $\hat{k}$

Using the following notation for $U$ and $V$

$$V = \dfrac{2\hat{k}^3 - 9\hat{k} + 27 \hat{\beta} }{18-6\hat{k}^2 } \sqrt{\dfrac{9}{\hat{k}^2 -3}} = \dfrac{2\hat{k}^3 - 9\hat{k} + 27 \hat{\beta} }{-2(\hat{k}^2 - 3)^{3/2} }$$

$$U = \dfrac{\hat{k}}{2 \sqrt{\hat{k}^2 -3}} = \dfrac{\hat{k}}{2 (\hat{k}^2 -3)^{1/2}}$$

We get that

\[
\begin{aligned}
    4U^3 - 3U &= V \\
    4\left(\dfrac{\hat{k}}{2 (\hat{k}^2 -3)^{1/2}}\right)^3 - 3\left(\dfrac{\hat{k}}{2 (\hat{k}^2 -3)^{1/2}}\right) &= \dfrac{2\hat{k}^3 - 9\hat{k} + 27 \hat{\beta} }{-2(\hat{k}^2 - 3)^{3/2} } \\
   \dfrac{\hat{k}^3}{2 (\hat{k}^2 -3)^{3/2}} - \dfrac{3\hat{k}}{2 (\hat{k}^2 -3)^{1/2}} &= \dfrac{2\hat{k}^3 - 9\hat{k} + 27 \hat{\beta} }{-2(\hat{k}^2 - 3)^{3/2} } \\
   \dfrac{\hat{k}^3}{2 (\hat{k}^2 -3)^{3/2}} + \dfrac{2\hat{k}^3 - 9\hat{k} + 27 \hat{\beta} }{2(\hat{k}^2 - 3)^{3/2} } - \dfrac{3\hat{k}}{2 (\hat{k}^2 -3)^{1/2}} &= 0 \\
   \dfrac{27 \hat{\beta} }{2(\hat{k}^2 - 3)^{3/2} } &= 0 \\
\end{aligned} 
\]

This last expression leads to the following result:

\[
\frac{27\hat{\beta}}{2(\hat{k}^2-3)^{3/2}} = 0
\Longleftrightarrow
\hat{k}^2\geq 3 \quad\wedge\quad \hat{\beta}=0
\]

In addition, we must take into account the boundary given by the $\arccos$ function.

\[
\begin{alignedat}{2}
    |U| &\leq 1 & \quad |V| &\leq 1 \\
    U^2 &\leq 1 & \quad V^2 &\leq 1 \\
    \dfrac{\hat{k}^2}{4 (\hat{k}^2 -3)} &\leq 1 & \quad 
    \dfrac{(2\hat{k}^3 - 9\hat{k} + 27 \hat{\beta})^2 }{4(\hat{k}^2 - 3)^{3} } &\leq 1 \\
    \hat{k}^2 &\leq 4 \hat{k}^2 - 12 & \quad
    (2\hat{k}^3 - 9\hat{k} + 27 \hat{\beta})^2 &\leq 4(\hat{k}^2 - 3)^{3} \\
    12&\leq 3\hat{k}^2 & \quad
    -\hat{k}^2 + 4\hat{k}^3\hat{\beta} - 18\hat{k}\hat{\beta} + 27\hat{\beta}^2 + 4 &\leq 0 \\
    4&\leq \hat{k}^2 & \quad
    -\Delta_u &\leq 0 \\
    & & \quad \Delta_u &\geq 0
\end{alignedat}
\]

Combining both results, we obtain
\[
\hat{k}^2 \geq 3 \quad\wedge\quad \hat{\beta}=0,
\]
and from the boundary conditions of the $\arccos$ function together with $\Delta_u>0$, it follows that in fact
\[
\hat{k}^2 \geq 4 \quad\wedge\quad \hat{\beta}=0.
\]

Therefore, under the assumptions $\Delta_u>0$ and $\arccos(U)=\tfrac{1}{3}\arccos(V)$, the only possible case is
\[
\hat{\beta}=0 \quad\text{with}\quad \hat{k}^2 \geq 4.
\]
\end{proof}

The boundary defined in Proposition \ref{prop_delta_over_zero} can be clearly seen using numerical simulation.

\begin{figure}[h]
    \centering
    \includegraphics[width=\textwidth]{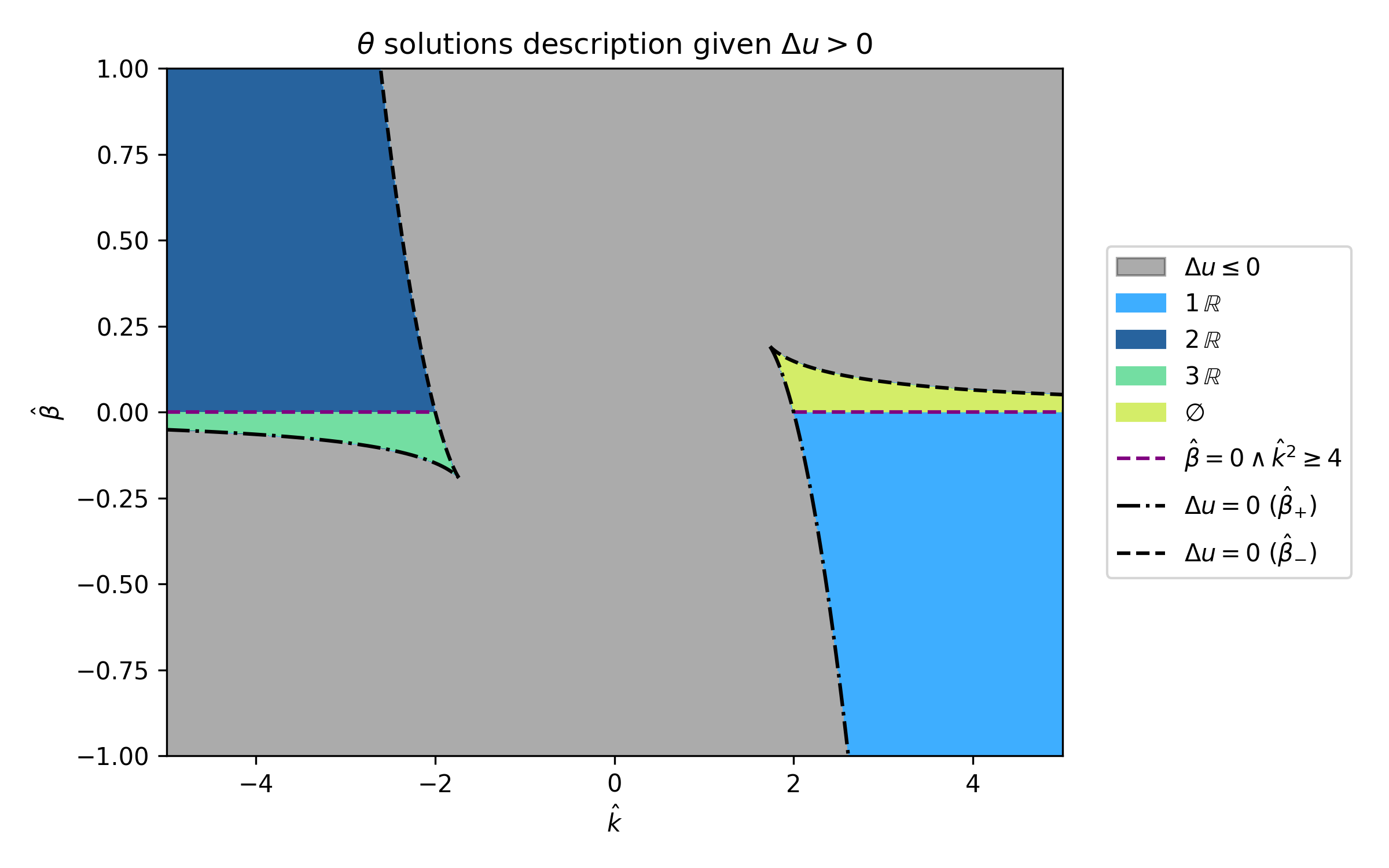}
    \caption{$\Theta_d$ solutions given $\Delta_u < 0$ }
    \label{deltau_signs_plus}
\end{figure}

Figure \ref{deltau_signs_plus} provides a detailed numerical map of the \( \Theta_d \) solutions in the parameter space, specifically focusing on the region where \( \Delta_u > 0 \) (three real \( u \)-solutions). The plot clearly shows boundaries that partition the region into sub-domains, each corresponding to a different number of \textit{physically admissible} \( \Theta_d \) solutions (i.e., those where \( u \) is real and results in \( \Theta_d \geq 0 \)). The boundaries derived from the trigonometric condition in Proposition 10 are visible and validate the analytical work. This figure is crucial for understanding the complex multistability and bifurcation phenomena that can occur in the insulated diode regime, showing how multiple physical states can coexist for the same control parameters.

\begin{figure}[h]
    \centering
    \includegraphics[width=\textwidth]{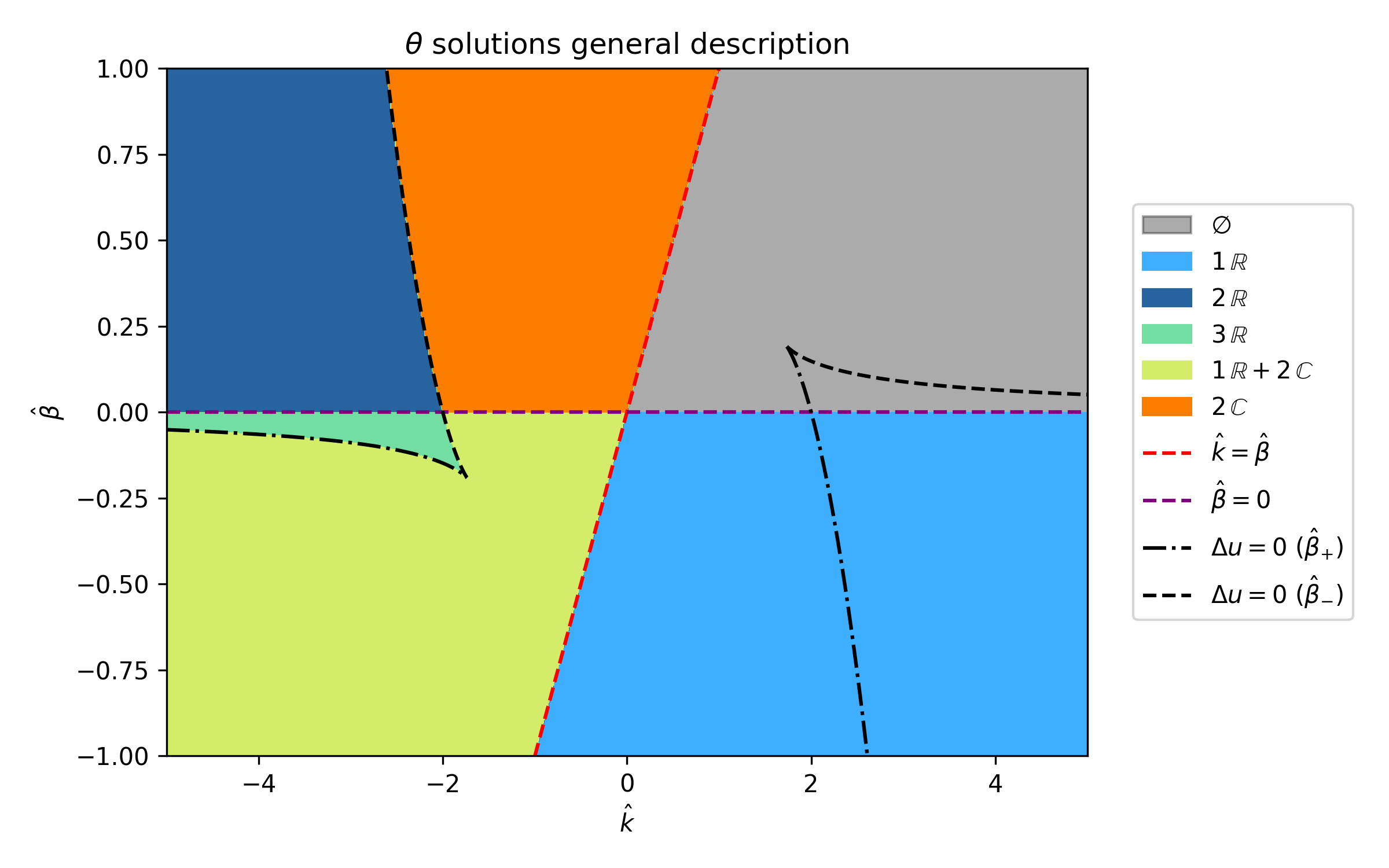}
    \caption{$\Theta_d$ solutions given $\Delta_u < 0$ }
    \label{deltau_signs_full}
\end{figure}

Figure \ref{deltau_signs_full} synthesizes the analytical and numerical results into a complete phase diagram of the solution structure for the effective potential \( \Theta_d \). It overlays the boundaries from Propositions \ref{prop_delta_min_zero} , \ref{prop_delta_min_zero_k} and \ref{prop_delta_over_zero} onto the parameter plane, creating a comprehensive chart. This final diagram labels distinct regions such as ``1 Real Adm. Solution,'' ``3 Real Adm. Solutions,'' and ``1 Real + 2 Complex,'' providing an immediate understanding of the solution landscape for any given pair \( (\hat{k}, \hat{\beta}) \). It serves as a master plot for predicting the diode's behavior, illustrating the direct link between the abstract parameters of the model and the physically observable states of magnetic insulation.

\section{Conclusions}

For practitioners in high-power electronics, this research offers a transformative toolkit for designing magnetically insulated diodes. The study's primary value lies in replacing historical trial-and-error methods with a predictive, model-based framework. Engineers can now leverage the derived bifurcation diagrams as a direct design guide to select parameters that ensure stable diode insulation. Similarly, the calculation of the 'insulated diode spacing' provides a concrete criterion for a fundamental design variable. By furnishing these rigorous tools - a mathematical model and a numerical analysis of bifurcation behavior - this work empowers the creation of significantly more efficient and reliable components for the most demanding power conversion and pulsed power applications.

In this work we have studied the boundary value problem of magnetically insulated diode, focusing on the analysis of the limit Cauchy problem and also on the isolated case. The first important step was to show that, under certain conditions, the reduced nonlinear system admits nonnegative solutions of the effective potential in the interval $[0, x^{*})$. These solutions coincide with the non--insulated regime and in fact they are compatible with the well known Child--Langmuir law [Langmuir, Compton, 1931] when the magnetic field vanishes. This makes a clear connection between the classical theory of space charge limitation and the more complicated case with magnetic insulation.  

For the insulated situation, we rewrote the conditions on the effective potential in terms of a cubic equation, which depends directly on the boundary conditions and on the parameters of the diode. The analysis of this equation allowed us to classify the different types of solutions depending on the sign of the discriminant, and in this way we obtained a description of the bifurcation structure of the problem. This classification gave us a better picture of how the transition between regimes occurs, how the critical values of the current density appear, and what role the free boundary $x^{*}$ plays in the formation of the electron layer near to the cathode.  

Even if the system is mathematically very demanding, we managed to combine analytical arguments with algebraic methods and numerical displays to obtain a coherent vision of the phenomena. In particular, the graphical exploration of bifurcations and of the effective potential supported the theoretical results and made them easier to interpret.  

Altogether, the results provide for the first time a unified description, both analytical and numerical, of the bifurcation phenomena in magnetically insulated diodes. They offer new insight on how magnetic insulation can control electron flow and energy transfer in vacuum devices. We believe that these findings open interesting possibilities for future research, especially in the direction of stability analysis and in possible applications in plasma physics and in high power electronic systems.

\section*{Acknowledgments}

The authors are grateful to Jurgen Batt, Pierre Degond and Walter Strauss for stimulating discussions. This work was partly funded 
by the Ministry of Science and Higher Education of Russian Federation, project FZZS-2024-0003 of INRTU.


\end{document}